\documentclass[11pt,letterpaper]{amsart}
\usepackage{enumitem}
\usepackage{moreenum}
\usepackage{mathtools}
\usepackage{csquotes}
\usepackage{hyperref}

\numberwithin{equation}{section}
\newtheorem{theorem}{Theorem}[section]
\newtheorem*{theorem*}{Main Theorem}
\newtheorem{lemma}[theorem]{Lemma}

\newtheorem{proposition}[theorem]{Proposition}
\theoremstyle{definition}

\newtheorem{remark}[theorem]{Remark}

\newcommand{\per}{\mathrm{per}}
\newcommand{\e}{\mathrm{e}}
\newcommand{\R}{\mathbb R}

\newcommand{\N}{\mathbb N}

\newcommand{\Om}{\mathcal O}
\newcommand{\SL}{\mathcal S}
\newcommand{\LL}{\mathcal L}
\newcommand{\T}{\mathcal T}
\newcommand{\M}{\mathcal M}
\newcommand{\K}{\mathcal K}
\newcommand{\Pro}{\mathcal P}
\newcommand{\A}{\mathcal A}

\newcommand{\Ch}{\mathcal C_h^L}
\newcommand{\Cc}{\mathcal C}
\DeclareMathOperator{\im}{im}
\DeclareMathOperator{\sgn}{sgn}
\DeclareMathOperator{\Real}{Re}
\DeclareMathOperator{\Imag}{Im}

\begin{document}
	
\title[Large-amplitude water waves with general vorticity]{Large-amplitude steady gravity water waves with general vorticity and critical layers}
\author{Erik Wahlén}
\address{Centre for Mathematical Sciences, Lund University, 221 00 Lund, Sweden}
\email{erik.wahlen@math.lu.se}
\author{Jörg Weber}
\address{Centre for Mathematical Sciences, Lund University, 221 00 Lund, Sweden}
\email{jorg.weber@math.lu.se}
\thanks{This project has received funding from the European Research Council (ERC) under the European Union’s Horizon 2020 research and innovation program (grant agreement no 678698) and the Swedish Research Council (grant no 2020-00440).}
\subjclass[2010]{76B15 (primary), 35B32, 35C07, 35Q31, 35R35, 76B47}

\begin{abstract}
	We consider two-dimensional steady periodic gravity waves on water of finite depth with a prescribed but arbitrary vorticity distribution. The water surface is allowed to be overhanging and no assumptions regarding the absence of stagnation points and critical layers are made.
	Using conformal mappings and a new reformulation of Bernoulli's equation, we uncover an equivalent formulation as ``identity plus compact'', which is amenable to Rabinowitz's global bifurcation theorem. This allows us to construct a global connected set of solutions, bifurcating from laminar flows with a flat surface. Moreover, a nodal analysis is carried out for these solutions under a certain spectral assumption involving the vorticity function. Lastly, downstream waves are investigated in more detail.
\end{abstract}

\maketitle

\section{Introduction}
The water wave problem concerns the motion of an inviscid, incompressible, and homogeneous fluid with a free surface.
For wavelengths beyond a few centimeters, gravity is the dominating restoring force and it is therefore a good approximation to consider pure gravity waves and ignore the effects of surface tension when focusing on large-scale features. 
Using global bifurcation theory, we consider the classical problem of the existence of large-amplitude two-dimensional periodic steady waves. In contrast to the many previous studies on the subject, the present paper for the first time simultaneously allows for a general vorticity distribution, stagnation points and arbitrarily many critical layers, and overhanging profiles of the free surface. We first provide a short overview of the extensive literature on this or similar problems.

In the previous century, mostly irrotational flows were considered, which enjoy the advantage of being thoroughly treatable by tools of complex analysis, because the water velocity can be written as the gradient of a harmonic potential. We refer to \cite{Groves04, HHSTWWW22, Toland96} for surveys on irrotational water waves. However, in many situations it is important to take vorticity into account, for example, in the presence of underlying non-uniform currents. When vorticity is included, it was unclear for a long time how to leave the regime of small perturbations of configurations with a flat surface. The breakthrough in this direction is due to Constantin and Strauss \cite{ConsStr04}, who utilized the semi-hodograph transformation of Dubreil-Jacotin \cite{Dubreil34}. However, such a transformation a priori precludes the presence of overhanging profiles, stagnation points, and critical layers. After that, similar problems, for example also taking into account capillary effects and/or stratification, were considered in the same spirit, see \cite{Wahlen06b,Wahlen06a} and \cite{EKLM20,Walsh09,Walsh14a,Walsh14b}, respectively.

In order to overcome the inherited downsides of the above-mentioned semi-hodograph transformation and to allow for stagnation points and critical layers, many papers use a ``naive'' flattening transform, where on each vertical ray the vertical coordinate is scaled to a constant; see \cite{AasVar18,EhrnEschWahl11,EhrnVill08,EhrnWahl15,HenMat13,KozKuz14,Varholm20,Wahlen09}. Consequently, a drawback of this naive flattening is that it needs the surface profile to be a graph and can thus not allow for overhanging waves.

However, numerical studies  \cite{DyaHur19b, DyaHur19a, RRP17, SimmenSaffman85, Teles-daSilvaPeregrine88, Vanden-Broeck96} and recent rigorous results \cite{HurWheeler20, HurWheeler21} show the existence of waves with overhanging surface profiles in the case of constant vorticity. Therefore, such profiles should also be expected when the vorticity distribution is more general than just being constant. Then, when one wants to allow for stagnation points and critical layers, and also for overhanging waves, the typical strategy is to use a conformal change of variables. This is classical in the irrotational context and can be used to reformulate the problem either as a singular integral equation for the tangent angle of the free surface \cite{Groves04, HHSTWWW22, Toland96} or as a nonlinear pseudodifferential equation for the surface elevation in the new variables introduced by Babenko \cite{Babenko87b} (see also \cite{Groves04, HHSTWWW22} and references therein). The latter approach was extended to constant vorticity by Constantin and V\u{a}rv\u{a}ruc\u{a} in \cite{ConsVarva11} and then utilized, together with Strauss, in \cite{ConsStrVarv16} to obtain pure gravity water waves of large amplitude. A key idea here was to reformulate the problem elegantly as a Riemann--Hilbert problem, which, however, relies heavily on the vorticity being constant. In the same spirit, certain stratified waves were constructed in \cite{Haziot21}, and local bifurcation for capillary-gravity waves with constant vorticity was investigated in \cite{Martin13}. We also like to mention \cite{HaziotWheeler21} for results on global bifurcation for solitary waves with constant vorticity. There, the authors write the problem as elliptic equations for the (imaginary part of the) conformal map and some quantity related to the stream function. Interestingly, those elliptic equations are purely local, while typically nonlocal terms appear in the previous approaches building upon \cite{ConsStrVarv16}. Our formulation, from this point of view, can be seen  as a hybrid since it consists of local as well as nonlocal ingredients. The authors of \cite{HaziotWheeler21} also mention that their approach can be generalized to arbitrary vorticity distributions, but do not carry this out.

In this paper, we will also use a conformal change of variables, thus allowing for stagnation points, critical layers, and overhanging profiles. The novelty here is however that we impose no assumption on the vorticity distribution, except for a very mild regularity assumption to be specified later, while, as mentioned above, previous papers in this direction only considered the case of constant vorticity. A very delicate issue is to reformulate the problem such that it becomes amenable to a certain global bifurcation theorem in order to construct steady water waves of large amplitude. Let us quickly review the different strategies in this regard that are typically pursued in the literature. First, degree theoretic methods using the Healey--Simpson degree and Kielhöfer degree, respectively, were used in \cite{ConsStr04} and \cite{Walsh14b}, respectively, albeit relying on the semi-hodograph transform in order to prove admissibility of the nonlinear operator. Second, analytic global bifurcation, going back to Dancer \cite{Dancer73} and Buffoni and Toland \cite{BuffTol03}, was, for example, used in \cite{ConsStrVarv16,Varholm20} to construct a global continuum of solutions, and has the advantage of producing a curve of solutions admitting locally a real-analytic reparametrization. However, to this end the occurring nonlinear operators have to be analytic; this, in turn, requires that the vorticity function be real-analytic unless the semi-hodograph transform has been applied in the first place. Third, also the global bifurcation theorem of Rabinowitz \cite{Rabinowitz71,Kielhoefer} has been utilized, where the operator equation needs to have the form \enquote{identity plus compact}, so that the problem has to be reformulated suitably. In this spirit, the strategy of \cite{HenMat14,Matioc14} was to invert the curvature operator in order to \enquote{gain a derivative}, crucially relying on the presence of surface tension; however, restricted to the case where the surface profile is a graph, and where the vorticity function satisfies a monotonicity assumption, thus giving rise to a reformulation via reduction to the boundary. We also mention \cite{AmbroseStraussWright16} where a similar strategy is used for vortex sheets in the irrotational context, allowing for overhanging waves and using a reformulation as a nonlinear singular integral equation in terms of the tangent angle. Then, the paper \cite{WahlenWeber21} first allowed for arbitrary vorticity, stagnation points, critical layers, and overhanging profiles, also crucially making use of nonzero surface tension to derive an \enquote{identity plus compact} operator equation via a novel reformulation of Bernoulli's equation. The main goal of the present paper is to extend this strategy and the results to the pure gravity case. To this end and what may seem surprising, we uncover yet another reformulation as \enquote{identity plus compact}, however, without making use of (the absent) surface tension. Interestingly, this formulation appears to be new even in the irrotational case.

Our paper is organized as follows: In Section \ref{sec:Preliminaries}, we state the problem, some preliminary tools, the functional-analytic setting, and our reformulation. We clarify, motivate, and justify this reformulation then in Section \ref{sec:Reformulation}. Having the new operator equation at hand, we proceed in Section \ref{sec:LocalBifurcation} with the investigation of local bifurcation, while deferring the proofs to Appendix \ref{appx:LocalBifurcation}, since the local theory is already well understood and not the main motivation of the present paper. Then, more importantly, we construct in Section \ref{sec:GlobalBifurcation} large amplitude solutions, applying the global bifurcation theorem of Rabinowitz, and investigate which alternatives for the global solution curve can occur; see Theorem \ref{thm:GlobalBifurcation}. 
The nodal properties of our constructed solutions are analyzed in more detail in Section \ref{sec:nodalanalysis}, leading to a strengthening of the results obtained in Theorems \ref{thm:GlobalBifurcation}; this, however, requires at least the spectral assumption \eqref{ass:Pruefergraph}. Finally, we prove in Section \ref{sec:downstream} that so-called downstream waves have in fact a unidirectional flow and cannot overhang. For completeness, we also present an analytic global bifurcation result, Theorem \ref{thm:GlobalBifurcation_Analytic}, with somewhat stronger conclusions under the assumption that the vorticity function is real-analytic. In order not to disrupt the flow of the paper, this is presented in Appendix \ref{appx:GlobalBifurcation_Analytic}. The abstract global bifurcation theorems that we use are recorded in Appendix \ref{appx:GlobalBif}. Finally, Appendix \ref{appx:degeneracy_conformal} contains a more detailed analysis of one of the alternatives in the global bifurcation result, degeneration of the conformal map.

Let us already state here, somewhat informally in words, the main results of this paper. For more precise statements we refer to Theorems \ref{thm:GlobalBifurcation} and \ref{thm:NodalProperties}.
\begin{theorem*}
	For any vorticity function with bounded first derivative, let us assume that small-amplitude waves (that is, small perturbations of configurations with a flat surface) have already been constructed (the theory in the small being, as mentioned above, already well understood). Then such a local family of solutions can be extended to a global family of solutions, along which one of the following alternatives occurs:
	\begin{itemize}
		\item The solution set is unbounded in the sense that the bifurcation parameter (here, the horizontal velocity at the free surface of the underlying laminar flow) becomes unbounded, or a certain Hölder norm of the function parametrizing the free surface becomes unbounded, or the total vorticity measured in $L^p$ becomes unbounded.
		\item The solution set loops back to a configuration with a flat surface.
		\item The solutions approach a wave of greatest height.
		\item The (conformal) change of variables becomes degenerate.
		\item The solution set contains a wave with self-intersecting surface profile.
		\item The solution set contains a wave whose surface intersects the flat bed.
	\end{itemize}
	Under additional assumptions on the vorticity function, much more can be said about structural properties of the waves, eliminating some of these alternatives. For example, if the conditions \eqref{eq:nodal cond} are satisfied, then the height of the surface profile is always monotone from crest to trough, the solution set cannot loop back to a configuration with a flat surface, the surface can never intersect the flat bed, and self-intersection of the surface has to occur exactly above a trough. In particular, this holds in the case of affine linear vorticity functions with non-positive slope, including constant vorticity.
\end{theorem*}

\section{Statement of the problem and preliminaries}\label{sec:Preliminaries}
\subsubsection*{Some notation}
Let us first introduce some notations that will be used throughout this paper. For a given open set $\Omega\subset\R^n$, $n\in\N\coloneqq\{1,2,\ldots\}$, and parameters $k\in\N_0\coloneqq\N\cup\{0\}$, $0<\alpha\le 1$, we denote by $C^{k,\alpha}(\overline\Omega)$ the usual Hölder space (Lipschitz space if $\alpha=1$), that is, the space of functions on $\Omega$ having derivatives up to order $k$ such that all derivatives of order $k$ are Hölder continuous with parameter $\alpha$. These spaces are equipped with the usual norm
\[\|f\|_{C^{k,\alpha}(\overline\Omega)}=\sum_{|\beta|\le k}\sup_\Omega |D^\beta f|+\sum_{|\beta|=k}\sup_{x,y\in\Omega,x\ne y}\frac{|D^\beta f(x)-D^\beta f(y)|}{|x-y|^\alpha}.\]
The index \enquote{loc} in, for example, $C_{\text{loc}}^{k,\alpha}(\overline\Omega)$ indicates that such functions are locally of class $C^{k,\alpha}$, that is, elements of $C^{k,\alpha}(\overline{\Omega'})$ for all open, bounded sets $\Omega'\subset\R^n$ such that $\overline{\Omega'}\subset\Omega$. Furthermore, for $\Omega$ and $k$ as above and $1\le p<\infty$, $W^{k,p}(\Omega)$ denotes the usual Sobolev space (Lebesgue space if $k=0$) equipped with the standard norm
\[\|f\|_{W^{k,p}(\Omega)}=\left(\sum_{|\beta|\le k}\int_\Omega|D^\beta f|^p\right)^{1/p}.\]
We use the usual abbreviations $L^p=W^{0,p}$ and $H^k=W^{k,2}$. Henceforth, $0<\alpha<1$ is arbitrary and fixed. Moreover, we write $\SL$ for the operator evaluating functions $f$, which are defined (at least) for $(x,y)\in\R\times\{0\}$, at $y=0$, that is,
\begin{align}\label{eq:def_S_eval}
	\SL f\coloneqq f(\cdot,0).
\end{align}
Furthermore, we denote (partial) derivatives by lower indices; for example, $f_x$ is the (partial) derivative of $f$ with respect to $x$. Finally, we will denote by $(X,Y)$ coordinates in the physical domain and by $(x,y)$ coordinates in the flattened domain, which is
\begin{align}\label{eq:def_Omegah}
	\Omega_h\coloneqq\R\times(-h,0)
\end{align}
for some $h>0$; they are related to each other via a conformal map, which we will explain in more detail later.

\subsubsection*{The equations}
Our problem reads in the so-called stream formulation as follows (see \cite{ConstantinBook}, for example):
\begin{subequations}\label{eq:OriginalEquations}
	\begin{align}
	\Delta\psi&=-\gamma(\psi)&\text{in }\Omega,\label{eq:OriginalEquations_Poisson}\\
	\frac{|\nabla\psi|^2}{2}+g(Y-h)&=Q&\text{on }S\label{eq:OriginalEquations_Bernoulli},\\
	\psi&=0&\text{on }S,\label{eq:OriginalEquations_KinematicTop}\\
	\psi&=-m&\text{on }Y=0\label{eq:OriginalEquations_KinematicBottom}.
	\end{align}
\end{subequations}
Here, $\psi$ is the stream function, which is assumed to be $L$-periodic in $X$ (where the period $L>0$ is fixed; we abbreviate $\nu\coloneqq 2\pi/L$) and which gives rise to the velocity field $(\psi_Y,-\psi_X)$ in a frame moving at a constant wave speed. Moreover, $\gamma$ is the vorticity function satisfying (this is the only assumption we shall always impose on $\gamma$)
\[\gamma\in C_{\text{loc}}^{1,1}(\R),\quad\|\gamma'\|_\infty<\infty,\]
$g>0$ is the gravitational constant, $\Omega$ is the a priori unknown $L$-periodic fluid domain bounded by the free surface $S$ and flat bed $\R\times\{0\}$, $h>0$ is the conformal mean depth of $\Omega$, and $Q,m\in\R$ are constants; in physical language, the Bernoulli constant $Q$ is related to the hydraulic head $E$ and the (constant) atmospheric pressure $p_{\mathrm{atm}}$ via $Q=E-p_{\mathrm{atm}}-gh$, and $m$ is the relative mass flux.

\subsubsection*{Parametrization of the fluid domain by a conformal map}
Let us briefly clarify the notion of conformal mean depth and provide a very important preliminary result on conformal mappings. To this end, we introduce the $L$-periodic Hilbert transform on a strip with depth $h$, given by
\begin{align}\label{eq:def_ChL}
	\Ch u(x)=-i\sum_{k\ne 0} \coth(k\nu h) \hat u_k e^{ik\nu x}
\end{align}
for an $L$-periodic function
\[u(x)=\sum_{k\ne 0} \hat u_k e^{ik\nu x}\]
with zero average.
\begin{lemma}\label{lma:ConformalMapping}
	Let $\Omega\subset\R^2$ be an $L$-periodic strip-like domain, that is, a domain contained in the upper half plane such that its boundary consists of the real axis $\R\times\{0\}$ and a simple curve $S=\{(u(s),v(s)):s\in\R\}$ with $u(s+L)=u(s)+L$, $v(s+L)=v(s)$, $s\in\R$. Then:
	\begin{enumerate}[label=(\roman*)]
		\item There exists a unique positive number $h$ such that there exists a conformal mapping $H=U+iV$ from the strip $\Omega_h=\R\times(-h,0)$ to $\Omega$ which admits an extension as a homeomorphism between the closures of these domains, with $\R\times\{0\}$ being mapped onto S and $\R\times\{-h\}$
		being mapped onto $\R\times\{0\}$, and such that $U(x+L,y)=U(x,y)+L$, $V(x+L,y)=V(x,y)$, $(x,y)\in\overline{\Omega_h}$.
		\item The conformal mapping $H$ is unique up to translations in the variable $x$ (in the preimage and the image).
		\item $U$ and $V$ are (up to translations in the variable $x$) uniquely determined by $w=V(\cdot,0)-h$ as follows: $V$ is the unique ($L$-periodic) solution of
		\begin{subequations}\label{eq:BVP_for_V}
		\begin{align}
			\Delta V&=0&\mathrm{in\ }\Omega_h,\\
			V&=w+h&\mathrm{on\ }y=0,\\
			V&=0&\mathrm{on\ }y=-h,
		\end{align}
		\end{subequations}
		and $U$ is the (up to a real constant unique) harmonic conjugate of $-V$. Furthermore, after a suitable horizontal translation, $S$ can be parametrized by
		\[S=\{(x+(\Ch w)(x),w(x)+h):x\in\R\}\]
		and it holds that
		\begin{align}\label{eq:nablaV_on_top}
			\SL\nabla V=(w',1+\Ch w').
		\end{align}
		\item If $S$ is of class $C^{1,\beta}$ for some $\beta>0$, then $U,V\in C^{1,\beta}(\overline{\Omega_h})$ and
		\[|dH/dz|^2=|\nabla V|^2\neq 0\mathrm{\ in\ }\overline{\Omega_h}.\]
	\end{enumerate}
\end{lemma}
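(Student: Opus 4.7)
The plan is to handle items (i)--(iv) in that order, using the classical theory of doubly-connected planar domains for (i)--(ii), Fourier analysis in the strip for (iii), and Kellogg--Warschawski boundary regularity for (iv). For items (i) and (ii), I would quotient $\Omega$ by the translation $z\mapsto z+L$, obtaining a doubly connected Riemann surface sitting inside the cylinder $\C/L\Z$. By the standard conformal classification of such domains, this quotient is conformally equivalent to exactly one cylinder $(\R/L\Z)\times(-h,0)$, with $h>0$ the conformal modulus; lifting to universal covers yields a conformal map $H\colon\Omega_h\to\Omega$ with the required periodicity, and continuous extension to the closures follows from Carath\'eodory's theorem applied to a fundamental period. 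Uniqueness up to a horizontal translation (item (ii)) then reflects the fact that the conformal automorphisms of $\Omega_h$ preserving each of its two boundary lines are exactly the translations $z\mapsto z+c$, $c\in\R$.

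For item (iii), uniqueness of the $L$-periodic harmonic solution $V$ of \eqref{eq:BVP_for_V} is immediate from the maximum principle, and its harmonic conjugate is then determined up to an additive real constant. A key preliminary is that $w$ has zero mean: from the Cauchy--Riemann identity $U_x=V_y$ and the translation property $U(x+L,y)=U(x,y)+L$ one gets $\int_0^L V_y(x,y)\,dx=L$ for every $y\in(-h,0)$, and integrating once more in $y$ from $-h$ to $0$ yields $\int_0^L V(x,0)\,dx=Lh$, i.e.\ $\bar w=0$. I would then solve \eqref{eq:BVP_for_V} Fourier-mode by Fourier-mode with the ansatz
\[V(x,y)=(y+h)+\sum_{k\ne 0}\hat w_k\,\frac{\sinh(k\nu(y+h))}{\sinh(k\nu h)}\,e^{ik\nu x},\]
from which $V_x(x,0)=w'(x)$ and $V_y(x,0)=1+\Ch w'(x)$ are read off directly, yielding \eqref{eq:nablaV_on_top}. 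Integrating $U_x=V_y$ in $x$ and normalising the additive constant to zero by a horizontal translation gives $U(x,0)=x+\Ch w(x)$, and evaluating $H=U+iV$ at $y=0$ produces the stated parametrization of $S$.

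Item (iv) is an application of the Kellogg--Warschawski theorem on boundary regularity of conformal maps between Jordan domains: when the image boundary arc is of class $C^{1,\beta}$, the conformal map and its derivative extend H\"older-continuously and without vanishing to the closure; the identity $|dH/dz|^2=|\nabla V|^2$ is the standard Cauchy--Riemann computation. The most delicate point in the whole argument is really item (i), i.e.\ the uniformization step that pins down the conformal modulus on the cylinder quotient; once that is in place, items (ii)--(iv) amount to the standard uniqueness statement for conformal automorphisms, some Fourier bookkeeping on the strip, and a classical boundary regularity citation.
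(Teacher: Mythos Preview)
Your outline is correct and follows the standard route: uniformization of the doubly-connected quotient to fix the modulus $h$, the automorphism group of the strip for uniqueness, explicit Fourier analysis for the boundary data, and Kellogg--Warschawski for boundary regularity. The paper itself does not give a proof of this lemma at all; it simply cites \cite[Theorem~2.2, Appendix~A]{ConsVarva11}. What you have written is essentially a sketch of the argument one finds there, so there is no discrepancy in approach---you have just unpacked the citation.
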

\begin{proof}
	See \cite[Theorem 2.2, Appendix A]{ConsVarva11}.\end{proof}

In \eqref{eq:OriginalEquations}, we demand that $\Omega$ is an $L$-periodic strip-like domain with boundary of class $C^{1,\alpha}$ and with conformal mean depth $h$ so that it is, due to Lemma \ref{lma:ConformalMapping}, the conformal image of $\Omega_h$ and the free surface $S$ is determined by some $L$-periodic $w$ of class $C^{1,\alpha}$ with zero average over one period satisfying
\[(1+\Ch w')^2+w'^2\ne 0\text{ on }\R\]
via
\begin{align}\label{eq:Sw}
	S=\{(x+(\Ch w)(x),w(x)+h):x\in\R\}.
\end{align}
Throughout this paper, $h>0$ is fixed. Therefore, all solutions obtained by some bifurcation theorem later will have the same conformal mean depth $h$.

In order to make sense of a conformal map as described above, $\Omega$ has to be a strip-like domain, and hence we have to exclude self-intersection of the wave and intersection of the surface with the bed a priori. Correspondingly, in terms of $w$ we assume
\begin{subequations}\label{eq:additional_requirements}
\begin{gather}
	x\mapsto(x+(\Ch w)(x),w(x)+h)\text{ is injective on }\R,\\
	w>-h\text{ on }\R.\label{eq:intersection_with_bed}
\end{gather}	
\end{subequations}

\subsubsection*{Trivial solutions}
Before being able to apply some bifurcation theorem, one first has to identify a curve of trivial solutions. In our context, they correspond to a configuration with a flat surface and where the stream function does not depend on $x$. In this spirit, we have to solve the boundary value problem
\[\psi_{yy}=-\gamma(\psi)\text{ on }[-h,0],\quad\psi(0)=0,\quad\psi(-h)=-m\]
for $\psi=\psi(y)$; then $\psi(\cdot-h)$ solves \eqref{eq:OriginalEquations} with $w\equiv0$ (that is, $\Omega=\R\times(0,h)$). However, in general, a solution to this boundary value problem neither has to exist nor be unique. Thus, we introduce a new parameter $\lambda\in\R$, which will later serve as the bifurcation parameter, and prescribe $\psi_y(0)=\lambda$. Notice that $\lambda$ can be interpreted as the (horizontal) velocity at the surface (or, rather, the relative velocity as everything is usually written down in a moving frame). The trivial solution corresponding to $\lambda$ is then $\psi=\psi^\lambda$, which is defined to be the unique solution of the Cauchy problem
\[\psi_{yy}^\lambda=-\gamma(\psi^\lambda)\text{ on }[-h,0],\quad\psi^\lambda(0)=0,\quad\psi_y^\lambda(0)=\lambda.\]
Indeed, there exists a unique solution due to the global Lipschitz continuity of $\gamma$. We therefore view $m$ not as a parameter, but as a function $m=m(\lambda)$ of $\lambda$ defined by
\begin{align}\label{eq:def_m}
	m(\lambda)\coloneqq -\psi^\lambda(-h).
\end{align}
At this point we mention that the assumption that $\gamma$ be globally Lipschitz continuous is only needed to ensure that all trivial solutions exist on $[-h,0]$; we could therefore relax this assumption and demand that all trivial solutions under consideration exist on $[-h,0]$, which is, however, not very explicit.

The Bernoulli constant corresponding to $\psi^\lambda$ is $Q=\lambda^2/2$. Thus, in general, we introduce
\begin{align}\label{eq:def_q}
	q=Q-\lambda^2/2
\end{align}
as the deviation of the Bernoulli constant from its value at a trivial solution.

\subsubsection*{The functional-analytic setting}
We now introduce the functional-analytic setting and denote
\begin{subequations}\label{eq:def_X}
\begin{align}
	\tilde X&\coloneqq C_{0,\per,\e}^{1,\alpha}(\R)\\
	&\phantom{\coloneqq\;}\times\left\{\phi\in C_{\per,\e}^{0,\alpha}(\overline{\Omega_h})\cap H_\per^1(\Omega_h):\phi=0\text{ on }y=0\text{ and on }y=-h\right\},\nonumber\\
	X&\coloneqq\R\times\tilde X,
\end{align}
\end{subequations}
with $\Omega_h$ defined in \eqref{eq:def_Omegah}. Here, the indices \enquote{$\per$}, \enquote{$\e$}, and \enquote{$0$} denote $L$-periodicity, evenness (in $x$ with respect to $x=0$), and zero average over one period. Let us briefly mention that our restriction to even functions and thus to symmetric waves is not too limiting since typically such waves are observed in nature and it can in fact be shown rigorously in some situations that solutions to \eqref{eq:OriginalEquations} necessarily have such a symmetry property \cite{ConstEhrnWahlen07,ConstantinEscher04}. Moreover, $X$ is equipped with the norm
\begin{align*}
	\|(q,w,\phi)\|_X&\coloneqq|q|+\|(w,\phi)\|_{\tilde X}\\
	&\coloneqq|q|+\|w\|_{C_\per^{1,\alpha}(\R)}+\|\phi\|_{C_\per^{0,\alpha}(\overline{\Omega_h})\cap H_\per^1(\Omega_h)};\\
	\|w\|_{C_\per^{1,\alpha}(\R)}&\coloneqq\|w\|_{C^{1,\alpha}([0,L])},\\
	\|\phi\|_{C_\per^{0,\alpha}(\overline{\Omega_h})\cap H_\per^1(\Omega_h)}&\coloneqq\|\phi\|_{C_\per^{0,\alpha}(\overline{\Omega_h})}+\|\phi\|_{H_\per^1(\Omega_h)}\\
	&\coloneqq\|\phi\|_{C^{0,\alpha}(\overline{\Omega_h^*})}+\|\phi\|_{H^1(\Omega_h^*)}.
\end{align*}
Here and in the following, $\Omega^*$ denotes one copy of an $L$-periodic domain $\Omega\subset\R^2$, that is,
\[\Omega^*\coloneqq\{(x,y)\in\Omega:x\in(0,L)\}.\]
In general, we will write
\[\|\cdot\|_{C_\per^{k,\alpha}(\R)}\coloneqq\|\cdot\|_{C^{k,\alpha}([0,L])},\;\|\cdot\|_{C_\per^k(\R)}\coloneqq\sum_{j=0}^k\|\cdot^{(j)}\|_\infty,\;\|\cdot\|_{C_\per^{k,\alpha}(\overline\Omega)}\coloneqq\|\cdot\|_{C^{k,\alpha}(\overline{\Omega^*})}\]
for $k\in\N_0$ and $\Omega$ as above, where $C^k$ is the space of $k$-times continuously differentiable functions as usual.
\subsubsection*{The nonlinear operator}
Given $w\in C_{0,\per}^{1,\alpha}(\R)$, we denote by $V$ the unique solution of \eqref{eq:BVP_for_V} in $C_\per^{1,\alpha}(\overline{\Omega_h})$ and thus view it as a function of $w$. Provided \eqref{eq:additional_requirements}, $V$ gives rise to a conformal mapping $H=U+iV$ from $\Omega_h$ to $\Omega=\Omega_w$, the surface of which being determined by \eqref{eq:Sw}, in view of Lemma \ref{lma:ConformalMapping} and the Darboux--Picard theorem (see also \cite{ConsVarva11}). Notice that $V$ is explicitly given by
\begin{align}\label{eq:V_explicit}
	V(x,y)=y+h+\sum_{k\neq 0}\hat w_k e^{ik\nu x} \frac{\sinh(k\nu(y+h))}{\sinh(k\nu h)},
\end{align}
where the $\hat w_k$ are the Fourier coefficients of $w$. We sometimes denote $V=V[w+h]$ (and likewise also for $H$ and $U$) if we want to express the dependency of $V$ on the boundary condition at $y=0$ explicitly. It is clear that $V$ is analytic in $w$.

For $(\lambda,w,\phi)\in  X$, we write $\A=\A(\lambda,w,\phi)$ for the unique solution of
\begin{subequations}\label{eq:def_A}
\begin{align}
	\Delta\A&=-\gamma(\phi+\psi^\lambda)|\nabla V|^2+\gamma(\psi^\lambda)&\text{in }\Omega_h,\\
	\A&=0&\text{on }y=0,\\
	\A&=0&\text{on }y=-h
\end{align}
\end{subequations}
in $C_\per^{2,\alpha}(\overline{\Omega_h})$. Notice that, if \eqref{eq:additional_requirements} is satisfied, this is equivalent to
\begin{align*}
	\Delta\psi&=-\gamma((\phi+\psi^\lambda)\circ H^{-1})&\text{in }\Omega_w,\\
	\psi&=0&\text{on }S_w,\\
	\psi&=-m(\lambda)&\text{on }Y=0,
\end{align*}
where $\psi=(\A+\psi^\lambda)\circ H^{-1}$. Thus, $\A(\lambda,w,\phi)=\phi$ is equivalent to the statement that
\[\psi=(\phi+\psi^\lambda)\circ H^{-1}\]
solves \eqref{eq:OriginalEquations_Poisson}, \eqref{eq:OriginalEquations_KinematicTop}, and \eqref{eq:OriginalEquations_KinematicBottom} with $\Omega=H(\Omega_h)$ and $m=m(\lambda)$, provided \eqref{eq:additional_requirements}. From this point of view, $\phi$ can be interpreted as the deviation from a trivial solution $\psi^\lambda$ in the flattened domain, while the relative mass flux $m(\lambda)$ of $\psi^\lambda$ remains unchanged. Let us remark here that the requirement $\phi\in H_\per^1(\Omega_h)$ in \eqref{eq:def_X} is only needed in the local analysis presented in particular in Appendix \ref{appx:range}, and does not impair any result of the whole paper.

Furthermore, the points $(\lambda,0,0,0)\in\R\times X$ are identified as trivial solutions. Moreover, clearly $V$ and thus $\A(\lambda,w,\phi)$ are even in $x$. We pause here briefly to point out that we have to take the detour via $\A$ and thus consider $(w,\phi)$ and not only $w$ as part of the unknown since a reduction to the boundary, that is, unique solvability of
\begin{align*}
	\Delta\phi&=-\gamma(\phi+\psi^\lambda)|\nabla V|^2+\gamma(\psi^\lambda)&\text{in }\Omega_h,\\
	\phi&=0&\text{on }y=0,\\
	\phi&=0&\text{on }y=-h,
\end{align*}
in terms of $V$ (and thus in terms of $w$) is in general not feasible, as it would require a monotonicity assumption on the nonlinearity $\gamma$, which we do not want to impose.

We will now also state how we rewrite the Bernoulli equation and then the nonlinear operator as a whole; this will seem a bit unclear at first sight, but will be justified below in Lemma \ref{lma:Reformulation}. Let us denote
\begin{align}
	\K(w)&\coloneqq\sqrt{(1+\Ch w')^2+w'^2},\quad w\in C_\per^{1,\alpha}(\R),\label{eq:def_K}\\
	\Om&\coloneqq\Big\{(\lambda,q,w,\phi)\in\R\times X:\K(w)>0\text{ on }\R,\hspace{3.2cm}\nonumber\\
	&\omit\hfill$\displaystyle\SL\partial_y\A(\lambda,w,\phi)+\lambda\ne0\text{ on }\R,\;q+\lambda^2/2-gw>0\text{ on }\R\Big\}.$\nonumber \end{align}
Clearly, $\Om\subset\R\times X$ is open, and
\begin{align}\label{eq:trvial_in_O}
	(\lambda,0,0,0)\in\Om\quad\Leftrightarrow\quad\lambda\ne0.
\end{align}
Moreover, let
\begin{align}\label{eq:def_R}
	R(\lambda,q,w,\phi)\coloneqq \frac{|\SL\partial_y\A(\lambda,w,\phi)+\lambda|}{\sqrt{2q+\lambda^2-2gw}}
\end{align}
for $(\lambda,q,w,\phi)\in\Om$, recalling \eqref{eq:def_S_eval} and \eqref{eq:def_A}. It is evident that $\K(w)$ and $R(\lambda,q,w,\phi)$ are also even in $x$. Let us remark that the conditions in the definition of $\Om$, once such a tuple $(\lambda,q,w,\phi)$ gives rise to a water wave, can be interpreted as \enquote{the conformal map is not degenerate}, \enquote{no surface stagnation in the flattened domain appears}, and \enquote{the wave is not of greatest height}. It is perfectly reasonable that one has to stay away from these scenarios in order to hope for a certain compactness and therefore gain of regularity -- this will be made much more clear in Section \ref{sec:Reformulation}.

We reformulate the original problem \eqref{eq:OriginalEquations} as 
\begin{align}\label{eq:F=0}
	F(\lambda,q,w,\phi)=0
\end{align}
for $(\lambda,q,w,\phi)\in\Om$, with
\begin{align}\label{eq:def_F}
	F\colon\Om\to X,\;F(\lambda,q,w,\phi)=(q,w,\phi)-\M(\lambda,q,w,\phi),
\end{align}
where
\begin{subequations}\label{eq:def_M}
\begin{gather}
	\M(\lambda,q,w,\phi)\coloneqq(\M^1(\lambda,q,w,\phi),\M^2(\lambda,q,w,\phi),\M^3(\lambda,q,w,\phi)),\\
	\M^1(\lambda,q,w,\phi)\coloneqq q+\left\langle R(\lambda,q,w,\phi)\cos\left((\Ch)^{-1}\Pro(\ln R(\lambda,q,w,\phi))\right)\right\rangle-1,\\
	\M^2(\lambda,q,w,\phi)\coloneqq\partial_x^{-1}\left(R(\lambda,q,w,\phi)\sin\left((\Ch)^{-1}\Pro(\ln R(\lambda,q,w,\phi))\right)\right),\\
	\M^3(\lambda,q,w,\phi)\coloneqq\A(\lambda,w,\phi).
\end{gather}
\end{subequations}
Here and in the following, while recalling \eqref{eq:def_ChL},
\[
	\langle f\rangle\coloneqq\frac1L\int_0^Lf(x)\,dx
\]
denotes the average of an $L$-periodic function $f$ over one period, $\Pro$ is the projection onto the space of functions with zero average, that is,
\[
	\Pro f\coloneqq f-\langle f\rangle,
\]
and $\partial_x^{-1}\colon C_{0,\per}^{0,\alpha}(\R)\to C_{0,\per}^{1,\alpha}(\R)$ is the inverse operation to differentiation, that is, the Fourier multiplier with symbol $(ik\nu)^{-1}$:
\[
	(\partial_x^{-1}f)(x)=\Pro\left( \int_0^xf(y)\,dy \right),\quad\text{that is,}\quad\partial_x^{-1}\sum_{k\ne0}\hat f_ke^{ik\nu x}=\sum_{k\ne0}\frac{1}{ik\nu}\hat f_ke^{ik\nu x}.
\]
We point out that (in particular the second component of) $\M$ is well-defined since the function $R(\lambda,q,w,\phi)\sin\left((\Ch)^{-1}\Pro(\ln R(\lambda,q,w,\phi))\right)$ is odd as $(\Ch)^{-1}$ reverses parity. Notice also that $\Ch$ acts as an isomorphism on $C_{0,\per}^{0,\alpha}(\R)$.

For the reader's convenience, let us also state, slightly informally, our reformulation in an easier to read way, which we will however never use in what follows:
\begin{align*}
	\left\langle R\cos\left((\Ch)^{-1}\Pro(\ln R)\right)\right\rangle&=1,\\
	w'&=R\sin\left((\Ch)^{-1}\Pro(\ln R)\right),\\
	\phi&=\A,
\end{align*}
where $\A=\A(\lambda,w,\phi)$ and $R=R(\lambda,q,w,\phi)$ are defined in terms of \eqref{eq:def_A} and \eqref{eq:def_R}, respectively.

\section{On the reformulation}\label{sec:Reformulation}
We now prove two important lemmas which justify and motivate this new reformulation. First, we show that it is indeed equivalent to the original problem, and then demonstrate that $\M$ is compact on certain subsets of $\Om$ as it \enquote{gains a derivative}.
\begin{lemma}\label{lma:Reformulation}
	Under the assumption \eqref{eq:additional_requirements}, a tuple $(\lambda,q,w,\phi)\in\Om$ solves \eqref{eq:F=0} if and only if $w$ is of class $C^{2,\alpha}$, $\Omega_w$, the surface of which being determined by \eqref{eq:Sw}, is of class $C^{1,\alpha}$, and $(w,(\phi+\psi^\lambda)\circ H^{-1})$, where the conformal mapping $H=U+iV\colon\Omega_h\to\Omega_w$ is uniquely (up to translations in $x$) determined by $w$, solves \eqref{eq:OriginalEquations} with $\Omega=\Omega_w$, $Q=q+\lambda^2/2$, and $m=m(\lambda)$ such that $|\nabla((\phi+\psi^\lambda)\circ H^{-1})|>0$ on the surface.
\end{lemma}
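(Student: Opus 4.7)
The plan is to split the equivalence into two clean pieces. First, $F^3 = 0$ is dispatched by inspection: from \eqref{eq:def_A}, conformal invariance of the Laplacian under $H$ (which exists and is a $C^{1,\alpha}$ homeomorphism by Lemma \ref{lma:ConformalMapping}, given $w\in C^{1,\alpha}$, $\K(w) > 0$, and \eqref{eq:additional_requirements}), and the chain rule, the identity $\phi = \A(\lambda,w,\phi)$ is equivalent to $\psi := (\phi + \psi^\lambda)\circ H^{-1}$ solving \eqref{eq:OriginalEquations_Poisson}, \eqref{eq:OriginalEquations_KinematicTop}, and \eqref{eq:OriginalEquations_KinematicBottom} in $\Omega_w$. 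The remaining task is to relate $F^1 = F^2 = 0$ to Bernoulli \eqref{eq:OriginalEquations_Bernoulli}.

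To that end I would first compute Bernoulli in conformal variables. Since $|\nabla V|^2 = |dH/dz|^2$, since $\phi \equiv 0$ on $y=0$ gives $\phi_x|_{y=0}=0$, and since $\SL|\nabla V|^2 = \K(w)^2$ by \eqref{eq:nablaV_on_top}, one finds $|\nabla\psi|^2|_S = (\SL\partial_y\A + \lambda)^2/\K(w)^2$. Inserting $Q = q + \lambda^2/2$ and $Y - h = w$ on $S$ and taking positive square roots (permitted on $\Om$ because $2q + \lambda^2 - 2gw > 0$ there), Bernoulli becomes the single pointwise identity
\[
R(\lambda, q, w, \phi) = \K(w)\quad \text{on } y = 0.
\]
As a byproduct, $|\nabla\psi|^2|_S = 2q + \lambda^2 - 2gw > 0$ on $\Om$, giving the asserted surface non-stagnation.

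The main obstacle is to link $R = \K(w)$ on $y = 0$ with the operator equations $F^1 = F^2 = 0$, which is done via $\Ch$. Consider $\log(dH/dz) = P + iQ$, holomorphic in $\Omega_h$. Since $V \equiv 0$ on $y = -h$ gives $V_x|_{y=-h} = 0$ and $U_x|_{y=-h} > 0$ by orientation, $Q|_{y=-h} = 0$, and hence $\langle Q|_{y=0}\rangle = 0$ by harmonicity. A direct Fourier computation using \eqref{eq:def_ChL} shows that any holomorphic function $P + iQ$ in $\Omega_h$ with $Q|_{y=-h}=0$ satisfies
\[
\Pro(P|_{y=0}) = \Ch(Q|_{y=0}).
\]
In the direction ``original equations $\Rightarrow F=0$'', $R = \K(w)$ on the surface gives $P|_{y=0} = \ln R$, so inverting yields $Q|_{y=0} = (\Ch)^{-1}\Pro\ln R$; splitting $dH/dz|_{y=0} = (1+\Ch w')+iw'$ via \eqref{eq:nablaV_on_top} into real and imaginary parts then reads off $F^2 = 0$ pointwise, and $F^1 = 0$ after averaging. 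In the opposite direction ``$F = 0 \Rightarrow$ original equations'' I would construct an auxiliary holomorphic $G$ on $\Omega_h$ with $\Real\log G|_{y=0} = \ln R$ and $\Imag \log G|_{y=-h} = 0$ (solvable as a mixed Dirichlet--Neumann problem since $R > 0$ on $\Om$); the boundary identity forces $G|_{y=0} = R\exp(i(\Ch)^{-1}\Pro\ln R)$. By $F^2 = 0$ one then has $\Imag G|_{y=0} = w' = \Imag(dH/dz)|_{y=0}$, and both $\Imag G$ and $\Imag(dH/dz)$ are harmonic in $\Omega_h$ with the same Dirichlet data on the top and bottom; uniqueness forces $\Imag G \equiv \Imag(dH/dz)$. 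The difference $G - dH/dz$ is then holomorphic and real on the connected strip, hence a real constant, which is annihilated by $F^1 = 0$ matching the averages of $\Real G|_{y=0}$ and $\Real(dH/dz)|_{y=0} = 1 + \Ch w'$. Therefore $G = dH/dz$ on $\Omega_h$, and evaluating moduli on $y = 0$ gives $R = \K(w)$, i.e., Bernoulli.

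Finally, the forward direction also requires upgrading $w$ from $C^{1,\alpha}$ to $C^{2,\alpha}$, which is a short bootstrap. Elliptic Schauder theory applied to \eqref{eq:def_A} gives $\phi = \A \in C^{2,\alpha}(\overline{\Omega_h})$ (the right-hand side is $C^{0,\alpha}$), hence $\SL\partial_y\A \in C^{1,\alpha}(\R)$ and $R \in C^{1,\alpha}(\R)$. Since $(\Ch)^{-1}$ preserves $C_{0,\per}^{1,\alpha}(\R)$, the right-hand side of $F^2 = 0$ lies in $C^{1,\alpha}(\R)$, forcing $w' \in C^{1,\alpha}(\R)$ and thus $w \in C^{2,\alpha}(\R)$. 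The $C^{1,\alpha}$ regularity of $\Omega_w$ then follows from the parametrization \eqref{eq:Sw} together with $\K(w) > 0$.
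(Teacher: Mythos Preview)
Your proof is correct and follows essentially the same route as the paper: both reduce the equivalence to $R=\K(w)$ on $y=0$, take the complex logarithm of $H'=dH/dz$, use the bottom boundary condition to pin down the argument, and relate real and imaginary boundary traces via $\Ch$; in the converse both build an auxiliary holomorphic function $e^{\rho+i\vartheta}$ and compare it to $H'$. The only places you are noticeably terser than the paper are the well-definedness and periodicity of $\log(dH/dz)$ (the paper handles this via a path integral and checks periodicity and parity of $\rho,\vartheta$ by hand) and the claim $\langle Q|_{y=0}\rangle=0$, which really needs periodicity of $P=\ln|H'|$ plus Cauchy--Riemann rather than harmonicity alone.
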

\begin{proof}
	As was already observed, $\phi=\A(\lambda,w,\phi)$ is equivalent to that \eqref{eq:OriginalEquations_Poisson}, \eqref{eq:OriginalEquations_KinematicTop}, \eqref{eq:OriginalEquations_KinematicBottom} are solved by $\psi=(\phi+\psi^\lambda)\circ H^{-1}$ with $m=m(\lambda)$. It is also clear that $w$ is of class $C^{2,\alpha}$ and the surface corresponding to $\Omega_w$ is of class $C^{1,\alpha}$ provided $(\lambda,q,w,\phi)\in\Om$ solves \eqref{eq:F=0}.
	
	Moreover, we rewrite the Bernoulli equation \eqref{eq:OriginalEquations_Bernoulli} for $\psi=(\phi+\psi^\lambda)\circ H^{-1}$:
	\begin{align}
		\K(w)=\frac{|\SL\partial_y\A(\lambda,w,\phi)+\lambda|}{\sqrt{2q+\lambda^2-2gw}}=R,\label{eq:OriginalBernoulliFlat}
	\end{align}
	noticing that
	\begin{align}\label{eq:length_of_nablaV_on_top}
		\SL|\nabla V|=\K(w)
	\end{align}
	due to \eqref{eq:nablaV_on_top}, with $\K(w)$ defined in \eqref{eq:def_K}. Above and throughout this proof, we abbreviate $R\coloneqq R(\lambda,q,w,\phi)$, as defined in \eqref{eq:def_R}. Let us now introduce
	\[G\coloneqq H'=V_y+iV_x,\]
	viewed as a function of $z=x+iy$, which is analytic in $\Omega_h$. In particular,
	\[\SL|G|=\K(w)=R.\]
	By \eqref{eq:additional_requirements} and Lemma \ref{lma:ConformalMapping}(iv) we have that $G\neq 0$ everywhere in $\overline{\Omega_h}$. Moreover, by \eqref{eq:BVP_for_V}, \eqref{eq:intersection_with_bed}, and the Hopf boundary-point lemma, it is evident that
	\begin{align}\label{eq:G_bottom}
		G(x-ih)=V_y(x,-h)+iV_x(x,-h)=V_y(x,-h)\in(0,\infty),\quad x\in\R.
	\end{align}
Therefore, we can introduce the complex logarithm of $G$ by
	\[\log G(z)=\ln G(-ih)+\int_{-ih}^z\frac{G'(\zeta)}{G(\zeta)}\,d\zeta,\quad z\in\overline{\Omega_h}.\]
	Clearly,
	\[e^{\log G}=G\]
	since this is certainly true at $z=-ih$ and the derivative of $Ge^{-\log G}$ is easily seen to be identically zero. Let us now define
	\[\rho\coloneqq\Real\log G,\quad\vartheta\coloneqq\Imag\log G.\]
	These two functions are periodic, and $\rho$ is even in $x$, while $\vartheta$ is odd. Indeed,
	\[\rho(x,y)-\rho(a,y)=\Real\int_a^x\frac{G'(s+iy)}{G(s+iy)}\,ds=\int_a^x\frac{V_yV_{xy}+V_xV_{xx}}{V_x^2+V_y^2}(s,y)\,ds,\]
	where the last integrand is periodic and odd in $s$. Therefore, the choice $a=x+L$ yields periodicity of $\rho$ in $x$, and the choice $a=-x$ yields evenness. On the other hand, by the Cauchy--Riemann equations we have
	\begin{gather*}
		\frac{\partial}{\partial y}(\vartheta(x+L,y)-\vartheta(x,y))=\rho_x(x+L,y)-\rho_x(x,y)=0,\\
		\frac{\partial}{\partial y}(\vartheta(x,y)+\vartheta(-x,y))=\rho_x(x,y)+\rho_x(-x,y)=0.
	\end{gather*}
	Since we already know that $\vartheta|_{y=-h}=0$ by \eqref{eq:G_bottom}, we infer that $\vartheta$ is periodic and odd in $x$. In particular, $\langle\SL\vartheta\rangle=0$ and hence
	\[\Pro\SL\rho=\Ch\SL\vartheta.\]
	Therefore, by $\SL e^\rho=\SL|G|=R$,
	\[1+\Ch w'+iw'=\SL G=\SL e^{\log G}=\SL e^\rho e^{i\vartheta}=Re^{i(\Ch)^{-1}\Pro(\ln R)}.\]
	Hence, on the one hand,
	\[w'=R\sin\left((\Ch)^{-1}\Pro(\ln R)\right).\]
	Since $R$ is even in $x$, the right-hand side is odd, and we can integrate this equation to get
	\[w=\M^2(\lambda,q,w,\phi).\] 
	On the other hand,
	\[1=\langle 1+\Ch w'\rangle=\left\langle R\cos\left((\Ch)^{-1}\Pro(\ln R)\right)\right\rangle,\]
	that is,
	\[q=\M^1(\lambda,q,w,\phi).\]
	
	Conversely, assume that \eqref{eq:F=0} holds. Then
	\[w'=R\sin\left((\Ch)^{-1}\Pro(\ln R)\right),\quad\left\langle R\cos\left((\Ch)^{-1}\Pro(\ln R)\right)\right\rangle=1.\]
	Let now $\vartheta$ be the periodic harmonic function with
	\[\SL\vartheta=(\Ch)^{-1}\Pro(\ln R),\quad\vartheta|_{y=-h}=0,\]
	and $\rho$ periodic such that $\rho+i\vartheta$ is analytic and $\SL\rho=\ln R$. Since $\Imag e^{\rho+i\vartheta}=e^\rho\sin\vartheta$ is harmonic and satisfies
	\[\SL\Imag e^{\rho+i\vartheta}=R\sin\left((\Ch)^{-1}\Pro(\ln R)\right)=w',\quad\Imag e^{\rho+i\vartheta}|_{y=-h}=0,\]
	we have \begin{align*}
		\SL\Real e^{\rho+i\vartheta}&=\langle\SL\Real e^{\rho+i\vartheta}\rangle+\Ch w'=\left\langle R\cos\left((\Ch)^{-1}\Pro(\ln R)\right)\right\rangle+\Ch w'\\
		&=1+\Ch w'.
	\end{align*}
	Hence,
	\[\K(w)=\SL|e^{\rho+i\vartheta}|=\SL e^\rho=R,\]
	which is \eqref{eq:OriginalBernoulliFlat}.
\end{proof}

\begin{remark}
In the irrotational case, $\mathcal A \equiv0$ (and thus $\phi\equiv0$ for solutions), whence the problem reduces to
\begin{subequations}\label{eq:reduced_equations}
\begin{gather}
	\left\langle R(\lambda,q,w)\cos\left((\Ch)^{-1}\Pro(\ln R(\lambda,q,w))\right)\right\rangle=1,\\
	w=\partial_x^{-1}\left(R(\lambda,q,w)\sin\left((\Ch)^{-1}\Pro(\ln R(\lambda,q,w))\right)\right),
\end{gather}
\end{subequations}
where
\[
R(\lambda,q,w)=\frac{|\lambda|}{\sqrt{2q+\lambda^2-2gw}}.
\]
Thus, our formulation does not reduce to Babenko's equation for irrotational flows. Further, if $\gamma$ is constant, then $\A=\A(w)$ depends only on $w$ via
\begin{align*}
	\Delta\A&=\gamma(1-|\nabla V|^2)&\text{in }\Omega_h,\\
	\A&=0&\text{on }y=0\text{ and }y=-h.
\end{align*}
Therefore, the equation $\phi=\A$ should here be viewed rather as the a posteriori definition of $\phi$ than an equation to be solved. Thus, the problem reduces again to \eqref{eq:reduced_equations}, but now with
\[
R(\lambda,q,w)=\frac{|\SL\partial_y\A(w)+\lambda|}{\sqrt{2q+\lambda^2-2gw}}.
\]
In particular, also here our formulation does not reduce to the one in \cite{ConsStrVarv16} for flows with constant vorticity.
\end{remark}

\begin{lemma}\label{lma:M_prop}
	The operator $\M$ (in particular, $\A$) and thus $F$ is of class $C^1$ on $\Om$. In fact, $\M$ is even of class $C^1$ viewed as an operator from $\Om$ to $\R\times C_{0,\per,\e}^{2,\alpha}(\R)\times C_{\per,\e}^{2,\alpha}(\overline{\Omega_h})$, the latter equipped with the canonical norm. If additionally $\gamma\in C_{\text{loc}}^{2,1}(\R)$, then the same statements hold with regularity $C^2$ of the operators instead of $C^1$.
	
	Moreover, for every $\varepsilon>0$ the operator $\M$ is compact on the set
	\begin{align}\label{eq:def_Omeps}
		\Om_\varepsilon\coloneqq\big\{(\lambda,q,w,\phi)\in\Om:\K(w),|\SL\partial_y\A(\lambda,w,\phi)+\lambda|,\\
		\omit\hfill$\displaystyle q+\lambda^2/2-gw\ge\varepsilon\emph{ on }\R\big\},$\nonumber
	\end{align}
	so that $F$ has the form \enquote{identity plus compact} on $\Om_\varepsilon$ in view of \eqref{eq:def_F}.
\end{lemma}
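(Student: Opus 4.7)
The plan is to prove the stronger statement first: $\M$ is $C^1$ (respectively $C^2$) as an operator from $\Om$ into $\R\times C_{0,\per,\e}^{2,\alpha}(\R)\times C_{\per,\e}^{2,\alpha}(\overline{\Omega_h})$. Both the weaker $C^1$-statement into $X$ and the compactness on $\Om_\varepsilon$ will then follow by postcomposing with, respectively, the bounded linear embedding into $X$ and the Arzelà--Ascoli / Rellich--Kondrachov compact embedding into $X$.

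For the smoothness of $\M^3=\A$, I would view the solution operator of \eqref{eq:def_A} as the composition of four building blocks. First, $w\mapsto V$ is bounded linear from $C_{0,\per,\e}^{1,\alpha}(\R)$ to $C_{\per,\e}^{1,\alpha}(\overline{\Omega_h})$, either via the explicit Fourier representation \eqref{eq:V_explicit} (combined with standard estimates on the $\coth$-multiplier) or via Schauder theory applied to \eqref{eq:BVP_for_V}. Second, $V\mapsto|\nabla V|^2$ is a smooth polynomial map $C^{1,\alpha}\to C^{0,\alpha}$. Third, $\lambda\mapsto\psi^\lambda$ is $C^\infty$ into $C^2([-h,0])$ by standard ODE parameter-dependence (using $\gamma\in C_{\text{loc}}^{1,1}$), and the Nemytskii operator $u\mapsto\gamma(u)$ is of class $C^1$ on $C_{\per,\e}^{0,\alpha}(\overline{\Omega_h})$ under the assumption $\gamma\in C^{1,1}$ (respectively of class $C^2$ if $\gamma\in C^{2,1}$). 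Fourth, the Dirichlet--Laplace inverse on $\Omega_h$ with periodic even data is a bounded linear isomorphism from $C_{\per,\e}^{0,\alpha}(\overline{\Omega_h})$ onto the subspace of $C_{\per,\e}^{2,\alpha}(\overline{\Omega_h})$ functions vanishing on $y=0$ and $y=-h$, by classical Schauder theory on the periodic cell $\Omega_h^*$ together with periodic extension.

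Next I would pass to $\M^1$ and $\M^2$. The trace $\SL\partial_y\A\in C^{1,\alpha}(\R)$ inherits its regularity in $(\lambda,w,\phi)$ from that of $\A$. On $\Om$ the function $\SL\partial_y\A+\lambda$ is continuous, $L$-periodic, and nowhere zero, hence of locally constant sign, so taking the absolute value is a smooth operation; the denominator $2q+\lambda^2-2gw$ is bounded below on $\Om$, so $\sqrt{\cdot}$ is also smooth. Consequently $R=R(\lambda,q,w,\phi)$ is a $C^1$ (respectively $C^2$) function into $C^{1,\alpha}(\R)$, and the same holds for $\ln R$, $\cos((\Ch)^{-1}\Pro\ln R)$, and $\sin((\Ch)^{-1}\Pro\ln R)$, since $\Pro$, $\langle\cdot\rangle$, and $(\Ch)^{-1}$ are bounded linear on each $C_{0,\per}^{k,\alpha}$ (the Fourier symbol $i\tanh(k\nu h)$ of $(\Ch)^{-1}$ differs from the periodic Hilbert multiplier by a smoothing operator). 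The decisive gain of a derivative in $\M^2$ is provided by $\partial_x^{-1}\colon C_{0,\per}^{1,\alpha}(\R)\to C_{0,\per}^{2,\alpha}(\R)$; evenness in $x$ follows from evenness of $R$ together with the parity-reversing character of $(\Ch)^{-1}$.

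Finally, for the compactness on $\Om_\varepsilon$: the lower bounds by $\varepsilon$ built into \eqref{eq:def_Omeps} make every estimate in the preceding smoothness analysis uniform on $X$-bounded subsets of $\Om_\varepsilon$, so $\M$ sends such subsets into bounded subsets of $\R\times C_{0,\per,\e}^{2,\alpha}(\R)\times C_{\per,\e}^{2,\alpha}(\overline{\Omega_h})$. This space embeds compactly into $X$ by Arzelà--Ascoli for the Hölder components and Rellich--Kondrachov for the $H_\per^1$-piece, giving compactness of $\M\colon\Om_\varepsilon\to X$ and the \enquote{identity plus compact} structure of $F$. The step I expect to require the most care is the Fréchet differentiability of the Nemytskii operator $u\mapsto\gamma(u)$ on $C^{0,\alpha}$, where the hypothesized regularity of $\gamma$ is essentially sharp and the derivative must be identified as multiplication by $\gamma'(u)$; the rest is careful bookkeeping of regularity through a chain of smooth Nemytskii maps and bounded linear operators, with a controlled gain of one derivative from $\partial_x^{-1}$ in $\M^2$ and two derivatives from the Dirichlet--Laplace solution operator in $\M^3$.
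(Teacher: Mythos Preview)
Your proposal is correct and follows essentially the same approach as the paper: smoothness of $\M$ via the chain rule through the Dirichlet solution map, Nemytskii operators (with the $C^{1,1}$/$C^{2,1}$ hypothesis on $\gamma$ used exactly where you flag it), and Schauder theory, followed by compactness via a gain of regularity and compact embedding into $X$. The only minor difference is that for the compactness of $\M^1,\M^2$ the paper estimates $R$ in $C^{0,\beta}$ for some $\beta\in(\alpha,1)$ and uses the compact embedding $C^{1,\beta}\hookrightarrow C^{1,\alpha}$, whereas you push $R$ all the way into $C^{1,\alpha}$ (via the $C^{2,\alpha}$ regularity of $\A$) and use $C^{2,\alpha}\hookrightarrow C^{1,\alpha}$; both routes are valid and yours is arguably more in line with the stronger target-space statement in the lemma.
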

\begin{proof}
	The other operations in the definition of $\M$, \eqref{eq:def_M}, being smooth, the property that $\M$ is of class $C^1$ (or $C^2$) follows from the facts that $\A$ is of class $C^1$ (or $C^2$), which is guaranteed by the assumption $\gamma\in C_{\text{loc}}^{1,1}(\R)$ (or $\gamma\in C_{\text{loc}}^{2,1}(\R)$). Now let $(\lambda,q,w,\phi)\in\Om_\varepsilon$ be arbitrary. In the following, we apply standard Schauder estimates; the quantity $c$ can change from line to line, but is always shorthand for a certain expression in its arguments which remains bounded for bounded arguments. First we have
	\begin{align}\label{eq:est_V}
		\|V\|_{C_\per^{1,\alpha}(\overline{\Omega_h})}\le c\left(\|w\|_{C_\per^{1,\alpha}(\R)}\right).
	\end{align}
	Thus and since $\psi^\lambda$ is of class $C^1$ with respect to $\lambda$, we see that
	\begin{align}
		\|\A(\lambda,w,\phi)\|_{C_\per^{2,\alpha}(\overline{\Omega_h})}&\le c\left(\|\phi\|_{C_\per^{0,\alpha}(\overline{\Omega_h})},\|V\|_{C_\per^{1,\alpha}(\overline{\Omega_h})},|\lambda|\right)\label{eq:est_A}\\
		&\le c\left(\|\phi\|_{C_\per^{0,\alpha}(\overline{\Omega_h})},\|w\|_{C_\per^{1,\alpha}(\R)},|\lambda|\right).\nonumber
	\end{align}
	This shows that $\M^3$ is compact on $\Om$ due to the compact embedding of $C_\per^{2,\alpha}(\overline{\Omega_h})$ in $C_\per^{0,\alpha}(\overline{\Omega_h})$ and in $H_\per^1(\Omega_h)$. As for $\M^1$ and $\M^2$, we proceed with $R$ and find that, for $\beta\in(\alpha,1)$,
	\begin{align}\label{eq:est_R}
		\|R(\lambda,q,w,\phi)\|_{C_\per^{0,\beta}(\R)}&\le c\left(\varepsilon^{-1},\|w\|_{C_\per^{0,\beta}(\R)},\|\A(\lambda,w,\phi)\|_{C_\per^{2,\alpha}(\overline{\Omega_h})},|\lambda|,|q|\right)\\
		&\le c\left(\varepsilon^{-1},\|\phi\|_{C_\per^{0,\alpha}(\overline{\Omega_h})},\|w\|_{C_\per^{1,\alpha}(\R)},|\lambda|,|q|\right),\nonumber\\
		R(\lambda,q,w,\phi)^{-1}&\le c\left(\varepsilon^{-1},\|w\|_{L^\infty(\R)},|\lambda|,|q|\right).\nonumber
	\end{align}
	Thus, clearly $\M^1$ is compact on $\Om_\varepsilon$. Moreover, since $\Ch$ is an isomorphism on $C^{0,\beta}_{0,\per}(\R)$, we conclude
	\begin{align}\label{eq:est_M1}
		\|\M^2(\lambda,q,w,\phi)\|_{C_\per^{1,\beta}(\R)}\le c\left(\varepsilon^{-1},\|\phi\|_{C_\per^{0,\alpha}(\overline{\Omega_h})},\|w\|_{C_\per^{1,\alpha}(\R)},|\lambda|,|q|\right).
	\end{align}
	Hence, $\M^2$ is compact on $\Om_\varepsilon$ since $C_\per^{1,\beta}(\R)$ is compactly embedded in $C_\per^{1,\alpha}(\R)$.
\end{proof}
\begin{remark}
	Let us emphasize that the change of variables $H$ being conformal is crucial in the above argument and thus for everything. Indeed, it is important that there appears only one derivative of $V$ in \eqref{eq:def_A} and therefore in the estimate \eqref{eq:est_A}. However, for a general change of variables $T$ we see that, by calculating how the Laplacian transforms with respect to $T$, no second derivatives of $T$ appear in the corresponding version of \eqref{eq:def_A} if and only if both components of $T^{-1}$ are harmonic -- this is in turn a very restrictive condition on $T$, of course satisfied for our conformal map $H$.
\end{remark}
Let us state also a regularity result for solutions $(\lambda,q,w,\phi)\in\Om$:
\begin{proposition}\label{prop:3alpha_regularity}
	Let $(\lambda,q,w,\phi)\in\Om$ solve $F(\lambda,q,w,\phi)=0$. Then $w\in C_{0,\per,\e}^{3,\alpha}(\R)$ and $\phi\in C_{\per,\e}^{3,\alpha}(\overline{\Omega_h})$.
\end{proposition}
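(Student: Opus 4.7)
The plan is a two-stage bootstrap exploiting the structure of $F=0$: the equation $\phi=\A(\lambda,w,\phi)$ is an elliptic Dirichlet problem on the strip $\Omega_h$ which, by standard Schauder theory with periodic boundary data, gains two derivatives from its right-hand side; the equation $w=\M^2(\lambda,q,w,\phi)$ reads $w=\partial_x^{-1}(R\sin((\Ch)^{-1}\Pro(\ln R)))$ and thus $w$ is one derivative more regular than $R$, because $\partial_x^{-1}$ gains a derivative and $\Ch$ is an isomorphism on $C_{0,\per}^{k,\alpha}(\R)$ for every $k\ge 0$. Since $(\lambda,q,w,\phi)\in\Om$ and all three quantities $\K(w)$, $|\SL\partial_y\A+\lambda|$, and $q+\lambda^2/2-gw$ are continuous, $L$-periodic, and strictly positive, they are bounded away from $0$ and $\infty$, so the scalar operations $\sqrt{\cdot}$, $|\cdot|$, $\ln(\cdot)$, $\sin$, and $\cos$ all preserve Hölder regularity on $R$. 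Evenness, periodicity, and the zero-average of $w$ persist automatically through all operations.

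First bootstrap. Starting from the given regularity $w\in C^{1,\alpha}$ and $\phi\in C^{0,\alpha}$, Lemma~\ref{lma:ConformalMapping}(iv) (or directly the Fourier series \eqref{eq:V_explicit}) yields $V\in C^{1,\alpha}(\overline{\Omega_h})$, so $|\nabla V|^2\in C^{0,\alpha}$, and since $\gamma$ is Lipschitz, $\gamma(\phi+\psi^\lambda)\in C^{0,\alpha}$. Hence the right-hand side of the defining PDE for $\A$ in \eqref{eq:def_A} lies in $C^{0,\alpha}$, and Schauder regularity up to the boundary gives $\A\in C^{2,\alpha}(\overline{\Omega_h})$, so $\phi=\A\in C^{2,\alpha}$. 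Consequently $\SL\partial_y\A\in C^{1,\alpha}(\R)$ and $R\in C^{1,\alpha}(\R)$, and the identity $w=\M^2(\lambda,q,w,\phi)$ upgrades $w$ to $C^{2,\alpha}$.

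Second bootstrap. With $w\in C^{2,\alpha}$ we obtain $V\in C^{2,\alpha}$ and thus $|\nabla V|^2\in C^{1,\alpha}$. Moreover, since $\phi+\psi^\lambda\in C^{1,\alpha}$ and $\gamma\in C_{\mathrm{loc}}^{1,1}$, the chain rule combined with Lipschitz continuity of $\gamma'$ yields $\gamma(\phi+\psi^\lambda)\in C^{1,\alpha}$ (the standard bound $\|\gamma'(u)\|_{C^{0,\alpha}}\le\|\gamma'\|_{C^{0,1}}(1+\|u\|_{C^{0,\alpha}})$ makes $(\gamma(u))_x=\gamma'(u)u_x$ of class $C^{0,\alpha}$). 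The right-hand side of the $\A$-equation now lies in $C^{1,\alpha}$, Schauder gives $\A\in C^{3,\alpha}$, and therefore $\phi\in C^{3,\alpha}$. Finally $\SL\partial_y\A\in C^{2,\alpha}$ and $w\in C^{2,\alpha}$ imply $R\in C^{2,\alpha}$, and one more application of $w=\M^2(\lambda,q,w,\phi)$ yields $w\in C^{3,\alpha}$.

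The only genuinely delicate point is the composition step: under the hypothesis $\gamma\in C_{\mathrm{loc}}^{1,1}$ we cannot push $\gamma\circ(\phi+\psi^\lambda)$ beyond $C^{1,\alpha}$, since this uses the chain rule only once and relies solely on $\gamma'$ being Lipschitz. This is exactly what terminates the bootstrap at $C^{3,\alpha}$, and any higher regularity of $w$ and $\phi$ would require correspondingly stronger smoothness of $\gamma$. Apart from this, the argument is a routine elliptic-regularity iteration combined with the Hilbert-transform calculus embodied in $\M^2$.
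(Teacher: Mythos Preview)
Your proof is correct and follows essentially the same bootstrap strategy as the paper: use Schauder regularity for the Dirichlet problem defining $\A$ to upgrade $\phi$, then exploit $w=\M^2(\lambda,q,w,\phi)$ (gaining a derivative via $\partial_x^{-1}$ and the $\Ch$-calculus) to upgrade $w$. The only difference is a minor technical variation in the second step: you invoke the higher-order Schauder estimate directly to pass from a $C^{1,\alpha}$ right-hand side to $\A\in C^{3,\alpha}$, whereas the paper differentiates the equation once in $x$, applies the basic $C^{2,\alpha}$ Schauder estimate to $\phi_x$, and then reads off $\phi_{yy}\in C^{1,\alpha}$ from the equation itself; both routes are standard and equivalent on the flat strip $\Omega_h$. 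Your first bootstrap is also slightly redundant in the paper's framework, since Lemma~\ref{lma:M_prop} already records that $\M$ maps into $C^{2,\alpha}$, so the paper simply starts from $w\in C^{2,\alpha}$ and $\phi\in C^{2,\alpha}$.
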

\begin{proof}
	It is clear that at least $w\in C_{0,\per,\e}^{2,\alpha}(\R)$ and $\phi\in C_{\per,\e}^{2,\alpha}(\overline{\Omega_h})$, and therefore in particular $V\in C_{\per,\e}^{2,\alpha}(\overline{\Omega_h})$. Now the function $\phi_x$ solves
	\begin{align*}
		\Delta\phi_x&=f&\text{in }\Omega_h,\\
		\phi_x&=0&\text{on }y=0,\\
		\phi_x&=0&\text{on }y=-h,
	\end{align*}
	with
	\[f=-\gamma'(\phi+\psi^\lambda)\phi_x|\nabla V|^2-2\gamma(\phi+\psi^\lambda)(V_xV_{xx}+V_yV_{xy})\in C_\per^{0,\alpha}(\overline{\Omega_h}),\]
	at first at least in the weak sense, but then also in the strong sense due to uniqueness. Thus, $\phi_x\in C_\per^{2,\alpha}(\overline{\Omega_h})$. Moreover, $(\phi_y)_x=(\phi_x)_y\in C_\per^{1,\alpha}(\overline{\Omega_h})$ and
	\[\phi_{yy}=-\phi_{xx}-\gamma(\phi+\psi^\lambda)|\nabla V|^2+\gamma(\psi^\lambda)\in C_\per^{1,\alpha}(\overline{\Omega_h});\]
	hence, $\phi\in C_{\per,\e}^{3,\alpha}(\overline{\Omega_h})$. Therefore, $R(\lambda,q,w,\phi)\in C_{\per,\e}^{2,\alpha}(\R)$ and thus $w\in C_{\per,\e}^{3,\alpha}(\R)$.
\end{proof}

\begin{remark}\label{rem:TrivialSolutions}
	Before we proceed with investigating bifurcations, we need to take a closer look at \enquote{trivial} solutions, the discussion being relevant throughout. A very desirable property would be that points of the form $(\lambda,0,0,0)$ (what we call trivial solutions) are exactly the solutions with a flat surface, that is, $w\equiv0$. In fact, this is true in certain cases for $\gamma$, but false in other cases, and closely connected to strict monotonicity of $m(\lambda)$, which was introduced in \eqref{eq:def_m}, as we will explain now.
	
	On the one hand, if
	\begin{align}\label{eq:sup_gamma'_DirEV}
		\sup\gamma'<\frac{\pi^2}{h^2},
	\end{align}
	we have that $q=0$ and $\phi\equiv0$ whenever $F(\lambda,q,0,\phi)=0$. Indeed, if $\phi\not\equiv0$ in the latter case, since $|\nabla V|=1$ here, then
	\begin{align*}
		\iint_{\Omega_h^*}|\nabla\phi|^2\,dx\,dy&=\iint_{\Omega_h^*}\phi(\gamma(\phi+\psi^\lambda)-\gamma(\psi^\lambda))\,dx\,dy=\iint_{\Omega_h^*}\phi^2\gamma'(\xi)\,dx\,dy\\
		&<\frac{\pi^2}{h^2}\iint_{\Omega_h^*}\phi^2\,dx\,dy
	\end{align*}
	for some $\xi$ between $\psi^\lambda$ and $\phi+\psi^\lambda$ (depending on $(x,y)$), where $\Omega_h^*=(0,L)\times(-h,0)$. But this contradicts the fact that $\pi^2/h^2$ is the first Dirichlet eigenvalue of $-\partial_y^2$ on $[-h,0]$ and therefore the first Dirichlet eigenvalue of $-\Delta$ on $\R\times[-h,0]$ with periodicity in $x$. Thus, $\phi\equiv0$ and hence also $q=0$. Moreover, since
	\[\partial_\lambda\psi^\lambda_{yy}+\gamma'(\psi^\lambda)\partial_\lambda\psi^\lambda=0\quad\text{on }[-h,0],\qquad\partial_\lambda\psi^\lambda(0)=0,\quad\partial_\lambda\psi^\lambda_y(0)=1,\]
	it follows by classical Sturm--Liouville theory that
	\begin{align}\label{eq:est_m'}
		m'(\lambda)=-\partial_\lambda\psi^\lambda(-h)\ge\varepsilon
	\end{align}
	for some $\varepsilon>0$ independent of $\lambda$.
	
	On the other hand, without \eqref{eq:sup_gamma'_DirEV} the above reasoning can no longer be applied. Indeed, if for example $\gamma(\psi)=a\psi$ for a Dirichlet eigenvalue $a>0$ of $-\partial_y^2$ on $[-h,0]$, then clearly $m(\lambda)=0$ for any $\lambda\in\R$. Moreover, for $\lambda\ne\tilde\lambda$ and recalling \eqref{eq:def_q}, the points
	\[(\lambda,0,0,0)\quad\text{and}\quad\left(\tilde\lambda,\frac{\lambda^2-\tilde\lambda^2}{2},0,\psi^\lambda-\psi^{\tilde\lambda}\right)\]
	provide the same physical configuration with a flat surface and stream function $\psi^\lambda$, but $\phi=\psi^\lambda-\psi^{\tilde\lambda}\not\equiv0$ in the second representation. Notice that here it is important that this $\phi$ still vanishes on $y=-h$ as demanded in \eqref{eq:def_X}, which is precisely the fact that $m(\lambda)=m(\tilde\lambda)$. This observation implies, more generally, that such two different representations of one trivial physical configuration can always be found whenever $m$ is not injective.
	
	At this point, we touch upon another issue, which might be seen as a small drawback of our reformulation: It is not clear in general that $m(\R)=\R$, that is, any solution of \eqref{eq:OriginalEquations} for any $m\in\R$ falls within the scope of our setting. In certain cases the property $m(\R)=\R$ holds, as, for example, the above reasoning in the case \eqref{eq:sup_gamma'_DirEV} shows, but may fail in other cases, as described above for certain linear $\gamma$.
	
	To sum up, throughout this paper, the reader should have in mind that our trivial solutions $(\lambda,0,0,0)$ exhaust the set of solutions with a flat surface provided \eqref{eq:sup_gamma'_DirEV}, but if \eqref{eq:sup_gamma'_DirEV} does not hold there might be solutions of the form $(\lambda,q,0,\phi)$ with $q\ne0$ and $\phi\not\equiv0$. Also notice that the distinction between these two scenarios via \eqref{eq:sup_gamma'_DirEV} is sharp, as can be seen by letting $a\coloneqq\pi^2/h^2$ in the example above.
\end{remark}

\section{Local bifurcation}\label{sec:LocalBifurcation}
We first investigate local bifurcations in this section, and then turn to global bifurcations in Section \ref{sec:GlobalBifurcation}. On the one hand, the local theory is already quite well understood \cite{KozKuz14,Varholm20} and not the main motivation of the present paper. On the other hand, we still want to convince the reader that the local theory can also be developed in our new reformulation. Therefore, as a compromise, we only state the main results here and defer their proofs as well as computations on the way towards them to Appendix \ref{appx:LocalBifurcation}.

We strive to apply the classical local bifurcation theorem of Crandall--Rabinowitz \cite[Theorem I.5.1]{Kielhoefer}, stated as Theorem \ref{thm:CrandallRabinowitz} in Appendix \ref{appx:LocalBifurcation}. Since Crandall--Rabinowitz requires $C^2$-regularity of the operator, we assume in view of Lemma \ref{lma:M_prop} that the vorticity function, additionally to being globally Lipschitz, satisfies
\[\gamma\in C_{\text{loc}}^{2,1}(\R)\]
throughout this Section \ref{sec:LocalBifurcation} and Appendix \ref{appx:LocalBifurcation}.

In order to state the results, we introduce the function $\beta=\beta^{\mu,\lambda}$, which is defined to be the unique solution of the boundary value problem
\begin{align}\label{eq:beta}
	\beta_{yy}+(\gamma'(\psi^\lambda)+\mu)\beta=0\text{ on }[-h,0],\quad\beta(-h)=0,\quad\beta(0)=1.
\end{align}
This can be done for all $\lambda,\mu\in\R$ such that $\mu$ is not in the Dirichlet spectrum of $-\partial_y^2-\gamma'(\psi^\lambda)$ on $[-h,0]$. As usual in local bifurcation theory, a dispersion relation and a transversality condition comes into play. Here,
\[d(-(k\nu)^2,\lambda)=0\]
is the dispersion relation, where
\begin{align}\label{eq:d(k,lambda)}
	d(\mu,\lambda)\coloneqq\beta_y^{\mu,\lambda}(0)+\lambda^{-1}\gamma(0)-\lambda^{-2}g,
\end{align}
and
\[d_\lambda(-(k\nu)^2,\lambda)\neq 0\]
is the transversality condition.

Now, using Theorem \ref{thm:CrandallRabinowitz}, we have the following result on local bifurcation:
\begin{theorem}\label{thm:LocalBifurcation}
	Assume that there exists $\lambda_0\neq 0$ such that 
	\begin{align}\label{ass:SL-spectrum}
		0\text{ is not in the Dirichlet spectrum of }-\partial_y^2-\gamma'(\psi^\lambda)\text{ on }[-h,0]
	\end{align}
	for $\lambda=\lambda_0$ and such that the dispersion relation
	\[d(-(k\nu)^2,\lambda_0)=0,\]
	with $d$ given by \eqref{eq:d(k,lambda)}, has exactly one solution $k_0\in\N$, and assume that the transversality condition
	\[d_\lambda(-(k_0\nu)^2,\lambda_0)\neq 0\]
	holds. Then there exists $\varepsilon>0$ and a $C^1$-curve $(-\varepsilon,\varepsilon)\ni s\mapsto(\lambda^s,q^s,w^s,\phi^s)$ such that $(\lambda^0,q^0,w^0,\phi^0)=(\lambda_0,0,0,0)$, $w^s\not\equiv0$ for $s\neq 0$, and such that $F(\lambda^s,q^s,w^s,\phi^s)=0$. Moreover, all solutions of $F(\lambda,q,w,\phi)=0$ in a neighborhood of $(\lambda_0,0,0,0)$ are on this curve or are trivial. Furthermore, the curve admits the asymptotic expansion $(q^s,w^s,\phi^s)=(0,s\T(\lambda_0)\theta)+o(s)$, where
	\begin{align}
		\theta(x,y)&=\beta^{-(k_0\nu)^2,\lambda_0}(y)\cos(k_0\nu x),\nonumber\\
		[\T(\lambda_0)\theta](x,y)&=\left(-\frac{1}{\lambda_0},\beta^{-(k_0\nu)^2,\lambda_0}(y)-\frac{\psi_y^{\lambda_0}(y)\sinh(k_0\nu(y+h))}{\lambda_0\sinh(k_0\nu h)}\right)\label{eq:CR_Ttheta}\\
		&\phantom{=\;}\cdot\cos(k_0\nu x),\nonumber\\
		\frac{o(s)}{s}&\overset{s\to0}\longrightarrow0\quad\mathrm{in\ }\R\times C_{0,\per,\e}^{2,\alpha}(\R)\times C_{\per,\e}^{2,\alpha}(\overline{\Omega_h}).\nonumber
	\end{align}
	These solutions give rise to proper water waves, that is, additionally \eqref{eq:additional_requirements} is satisfied.
\end{theorem}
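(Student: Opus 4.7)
The plan is to verify the hypotheses of the Crandall--Rabinowitz theorem (Theorem \ref{thm:CrandallRabinowitz}) for the operator $F$ at $(\lambda_0, 0, 0, 0)$. Under the standing assumption $\gamma \in C^{2,1}_{\mathrm{loc}}(\R)$, Lemma \ref{lma:M_prop} delivers $F \in C^2(\Om; X)$, and direct substitution using $V[h] \equiv y+h$, $\A(\lambda, 0, 0) = 0$, and $R(\lambda, 0, 0, 0) = 1$ confirms $F(\lambda, 0, 0, 0) = 0$ for every $\lambda \neq 0$. Because $F = \mathrm{id} - \M$ with $\M$ compact on the relevant $\Om_\varepsilon$, the partial derivative $\LL \coloneqq D_{(q, w, \phi)} F(\lambda_0, 0, 0, 0)$ is a compact perturbation of the identity on $\tilde X$, hence Fredholm of index zero; it therefore suffices to establish $\dim \ker \LL = 1$ together with the transversality condition.

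To compute $\ker \LL$, I would first linearize the individual building blocks at the trivial state. A kernel element $(\dot q, \dot w, \dot \phi)$ satisfies $\dot \phi = \dot \A$ (from the $\M^3$-equation), where $\dot \A$ solves $(\Delta + \gamma'(\psi^{\lambda_0})) \dot \phi = -2 \gamma(\psi^{\lambda_0}) \dot V_y$ with zero Dirichlet data, and $\dot V$ is the harmonic extension of $\dot w$ vanishing at $y = -h$. Because $\ln R \equiv 0$ at the trivial state, the $\M^1, \M^2$ linearizations collapse drastically to $D\M^1 \cdot (\dot q, \dot w, \dot \phi) = \dot q + \langle \dot R \rangle$ and $D\M^2 \cdot (\dot q, \dot w, \dot \phi) = \partial_x^{-1}(\Pro \dot R)$, with $\dot R$ a concrete linear combination of $\SL \partial_y \dot \A$, $\dot q$, and $\dot w$. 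Expanding in a Fourier cosine series, the modes decouple: the $k = 0$ part combined with the scalar $\M^1$-relation yields $\dot q = 0$ and forces $\hat \phi_0 \equiv 0$ through the invertibility assumption \eqref{ass:SL-spectrum}, while for $k \geq 1$ the mode reduces to a coupled boundary value problem whose consistency requirement is precisely the dispersion relation $d(-(k \nu)^2, \lambda_0) = 0$. Solving this at $k = k_0$ then produces the explicit formula \eqref{eq:CR_Ttheta}, and the uniqueness hypothesis on $k_0$ gives $\dim \ker \LL = 1$.

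For transversality, I would differentiate $\LL$ in $\lambda$ at $\lambda_0$, evaluate on the kernel generator, and pair with a generator of the cokernel via the Fredholm alternative. After substantial cancellation this pairing should reduce to a nonzero scalar multiple of $d_\lambda(-(k_0\nu)^2, \lambda_0)$, which is nonzero by assumption. The main technical obstacle is the bookkeeping here: one must keep track of how $\partial_\lambda$ acts simultaneously inside $\gamma(\psi^\lambda)$ and $\gamma'(\psi^\lambda)$ via $\partial_\lambda \psi^\lambda$, and how this combines with the explicit $\lambda$-appearances in the formula for $R$, to recover precisely $d_\lambda$ in its Sturm--Liouville form \eqref{eq:d(k,lambda)}.

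With these inputs, Crandall--Rabinowitz produces the $C^1$-curve with the stated asymptotic expansion; the stronger $C^{2,\alpha}$ remainder follows by running the theorem with the improved codomain of $\M$ available through Lemma \ref{lma:M_prop}. Finally, to verify that the curve consists of genuine water waves, observe that at $s = 0$ the defining conditions of $\Om$ all hold strictly ($\K(0) = 1$, $|\lambda_0| > 0$, $\lambda_0^2/2 > 0$) and the injectivity and bed-separation conditions \eqref{eq:additional_requirements} hold with strict margin (the surface parametrization being the identity $x \mapsto (x, h)$), so all of these persist for small $|s|$ by $C^{1,\alpha}$-continuity.
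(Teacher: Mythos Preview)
Your overall strategy---verify Crandall--Rabinowitz via Fredholm index zero, Fourier-decompose the kernel to extract the dispersion relation, then check transversality---matches the paper's. The difference is that you linearize directly in the variables $(q,w,\phi)$, whereas the paper first passes through the ``good unknown'' isomorphism $\T(\lambda)\colon Y\to\tilde X$, $\theta\mapsto(-\lambda^{-1}\SL\theta,\,\theta-\lambda^{-1}\psi_y^\lambda V[\SL\theta])$, and analyzes instead $\LL(\lambda)=F^{2,3}_{(w,\phi)}(\lambda,0,0,0)\circ\T(\lambda)$. In the $\theta$-variable the kernel equations collapse to the clean Sturm--Liouville system $\theta_k''+(\gamma'(\psi^\lambda)-(k\nu)^2)\theta_k=0$ with the Robin condition encoding $d$, and---crucially for transversality---$\LL(\lambda)$ is symmetric with respect to an explicit inner product, so the range is the orthogonal complement of the kernel and the transversality pairing reduces in two lines to $\partial_\lambda\beta_y(0)-\lambda^{-2}\gamma(0)+2\lambda^{-3}g=d_\lambda$. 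Your direct approach should also succeed, but the cokernel identification and the ``substantial cancellation'' you anticipate are precisely what the good unknown is designed to avoid; this is worth knowing as a general tool.

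Two small slips to flag. First, your linearization of $\M^2$ drops a factor: since $\M^2=\partial_x^{-1}(R\sin((\Ch)^{-1}\Pro\ln R))$ and $R\equiv1$, $\ln R\equiv0$ at the trivial state, the derivative is $\partial_x^{-1}(\Ch)^{-1}\Pro\dot R$, not $\partial_x^{-1}\Pro\dot R$. Second, the improved $C^{2,\alpha}$ remainder is not obtained by ``running the theorem with the improved codomain'' (Crandall--Rabinowitz needs domain and codomain to coincide); the paper instead bootstraps a posteriori by writing $o(s)/s=(\M(\lambda^s,\ldots)-\M(\lambda_0,0)-\M'(\lambda_0,0)[\ldots])/s+\M_{(q,w,\phi)}(\lambda_0,0)\,o(s)/s$ and using that $\M$ is $C^1$ into the stronger space, so each term on the right tends to zero there.
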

At this point, $\T(\lambda_0)\theta$ can be just thought of as a placeholder for the term at linear order in $s$ in the asymptotic expansion. However, both $\theta$ and $\T$ have a deeper meaning, and we refer to Appendix \ref{appx:LocalBifurcation:CR} for more details.

Let us also state two general results on the dispersion relation and the transversality condition.
\begin{proposition}\label{prop:solvability_disprel}
	Assume that $\sup \gamma'< \pi^2/h^2$. Then condition \eqref{ass:SL-spectrum} is satisfied. If additionally 
	\[
	\lambda^{-2}g -\lambda^{-1}\gamma(0)> 
	\begin{cases} 
		\sqrt{\inf \gamma'} \cot (h\sqrt{\inf \gamma'}) & \text{if } \inf \gamma'>0,\\
		1/h & \text{if } \inf \gamma'=0,\\
		\sqrt{|\inf \gamma'|} \coth (h\sqrt{|\inf \gamma'}|) & \text{if } \inf \gamma'<0,
	\end{cases}
	\]
	there is precisely one negative solution $\mu_0$ to the equation $d(\mu, \lambda)=0$. In particular,
	\begin{enumerate}[label=(\roman*)]
		\item the dispersion relation $d(-(k\nu)^2,\lambda)=0$ has in $\N$ the unique solution $k_0$ if $k_0\in\N$ and $\nu>0$ are chosen such that $-(k_0\nu)^2=\mu_0$;
		\item this criterion is satisfied for sufficiently small $|\lambda|$.
	\end{enumerate}
\end{proposition}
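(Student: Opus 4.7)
The plan has three pieces corresponding to the three claims: the spectral condition \eqref{ass:SL-spectrum}, the existence and uniqueness of a negative root $\mu_0$ of $d(\cdot,\lambda)=0$, and the two consequences (i)--(ii).

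For the spectral condition, I would invoke the variational characterization of the first Dirichlet eigenvalue $\mu_1(\lambda)$ of $-\partial_y^2-\gamma'(\psi^\lambda)$ on $[-h,0]$ and the sharp Poincaré inequality $\int_{-h}^{0}\beta_y^2\,dy\ge (\pi^2/h^2)\int_{-h}^0\beta^2\,dy$ for $\beta\in H_0^1(-h,0)$. This immediately gives $\mu_1(\lambda)\ge \pi^2/h^2-\sup\gamma'>0$, so $0$ lies below the spectrum. In particular, $\beta^{0,\lambda}$ is well defined and, by a standard nodal argument (otherwise $\beta^{0,\lambda}$ would be an eigenfunction of the same operator on a smaller subinterval with eigenvalue $0$, contradicting the monotonicity of the Dirichlet eigenvalue in the interval), it is strictly positive on $(-h,0]$.

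For the existence and uniqueness of $\mu_0$, the key is to establish strict monotonicity of $\mu\mapsto\beta_y^{\mu,\lambda}(0)$ on $(-\infty,\mu_1(\lambda))$ and to locate $\beta_y^{0,\lambda}(0)$ from above. Monotonicity follows from differentiating the defining BVP \eqref{eq:beta} in $\mu$: writing $\tilde\beta:=\partial_\mu\beta^{\mu,\lambda}$ one gets a BVP with source $-\beta$ and zero Dirichlet data, and then testing against $\beta$ and integrating by parts yields the clean identity
\[
\partial_\mu\beta_y^{\mu,\lambda}(0)=-\int_{-h}^0\bigl(\beta^{\mu,\lambda}\bigr)^2\,dy<0.
\]
For the upper bound on $\beta_y^{0,\lambda}(0)$, I would use the Wronskian-type Sturm comparison: compare $\beta^{0,\lambda}$ with the explicit solution $\tilde\beta$ of the constant-coefficient problem $\tilde\beta_{yy}+c\tilde\beta=0$, $\tilde\beta(-h)=0$, $\tilde\beta(0)=1$, for $c=\inf\gamma'$. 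The identity
\[
\tilde\beta_y(0)-\beta_y^{0,\lambda}(0)=\int_{-h}^0\bigl(\gamma'(\psi^\lambda)-c\bigr)\tilde\beta\,\beta^{0,\lambda}\,dy\ge 0,
\]
which uses positivity of both $\beta^{0,\lambda}$ and $\tilde\beta$ on $(-h,0)$, together with the explicit evaluation of $\tilde\beta_y(0)$ in the three cases (trigonometric, linear, hyperbolic) yields exactly the three expressions in the hypothesis. Thus the hypothesis on $\lambda^{-2}g-\lambda^{-1}\gamma(0)$ gives $d(0,\lambda)<0$, while $\beta_y^{\mu,\lambda}(0)\to+\infty$ as $\mu\to-\infty$ (e.g. via the Prüfer angle, or by noting that for $\mu\ll 0$ the problem is of hyperbolic type with large exponent), hence $d(\mu,\lambda)\to+\infty$ there. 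Strict monotonicity of $d(\cdot,\lambda)$ on $(-\infty,\mu_1(\lambda))\supset(-\infty,0]$ combined with $d(0,\lambda)<0$ and the intermediate value theorem gives a unique $\mu_0\in(-\infty,0)$ solving $d(\mu_0,\lambda)=0$.

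The two corollaries are then straightforward: (i) since all numbers of the form $-(k\nu)^2$, $k\in\N$, are negative, and $\mu_0$ is the unique negative root of $d(\cdot,\lambda)$, there can be at most one $k_0\in\N$ with $-(k_0\nu)^2=\mu_0$, and when such a $k_0$ exists it is by construction the sole solution in $\N$. For (ii), observe that the right-hand side of the hypothesis is a fixed finite number depending only on $h$ and $\gamma'$, whereas the left-hand side $\lambda^{-2}g-\lambda^{-1}\gamma(0)\to+\infty$ as $\lambda\to 0$, so the criterion holds for all sufficiently small $|\lambda|$. The main technical obstacle is keeping the Sturm comparison tight enough to recover the three explicit expressions; the Wronskian identity above does this in one stroke, with the sign of the integrand being the only subtlety (handled by the positivity of the two solutions, itself a consequence of the spectral estimate in the first step).
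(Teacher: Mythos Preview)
Your argument is correct. The paper's own proof proceeds slightly differently: it outsources the two analytic ingredients to the Pr\"ufer-angle framework quoted from \cite{Varholm20}, namely strict monotonicity of $\mu\mapsto\beta_y^{\mu,\lambda}(0)$ on each branch and the two-sided comparison bounds \eqref{eq:bounds_beta}, and then simply observes that the hypothesis $\sup\gamma'<\pi^2/h^2$ forces the leftmost branch to extend past $\mu=0$, while the additional inequality together with the upper bound in \eqref{eq:bounds_beta} at $\mu=0$ gives $d(0,\lambda)<0$. Your route replaces both citations by short self-contained computations: the identity $\partial_\mu\beta_y^{\mu,\lambda}(0)=-\int_{-h}^0(\beta^{\mu,\lambda})^2\,dy$ for the monotonicity, and the Wronskian comparison with the constant-coefficient solution for the upper bound $\beta_y^{0,\lambda}(0)\le v(\inf\gamma')$. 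This is arguably cleaner and avoids invoking the Pr\"ufer machinery; conversely, the paper's bounds \eqref{eq:bounds_beta} also supply the lower estimate $\beta_y^{\mu,\lambda}(0)\ge v(\mu+\sup\gamma')$, which makes the limit $\beta_y^{\mu,\lambda}(0)\to+\infty$ as $\mu\to-\infty$ immediate, whereas you only gesture at this step (the same Wronskian argument with $c=\sup\gamma'$ would close it). The spectral assertion and the positivity of $\beta^{0,\lambda}$ are handled identically in both approaches via the Poincar\'e inequality.
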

Note that the above condition reduces to $\lambda^2<gh$ in the irrotational case and to the subcriticality condition $\lambda^{-2}g -\lambda^{-1} \gamma > 1/h$ in the case of constant vorticity $\gamma$. For a further analysis of the dispersion relation we refer to \cite{KozKuz14,KKL14}.
\begin{proposition}\label{prop:transversality_redundant}
	Assume $\sup \gamma'< \pi^2/(4h^2)$, $\mu\le0$, and
	\begin{enumerate}[label=(\roman*)]
		\item $\gamma''\ge0$ and $\lambda>0$, or
		\item $\gamma''\le0$ and $\lambda<0$.
	\end{enumerate}
	Then $d_\lambda(\mu,\lambda)\ne0$ provided $d(\mu,\lambda)=0$.
\end{proposition}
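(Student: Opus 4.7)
The plan is to derive an explicit formula for $d_\lambda(\mu,\lambda)$ via a Lagrange-identity trick, to simplify it using the dispersion relation $d(\mu,\lambda)=0$, and finally to show that each surviving term has a definite sign under the spectral bound $\sup\gamma'<\pi^2/(4h^2)$.

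First I would set $\tilde\beta\coloneqq\partial_\lambda\beta^{\mu,\lambda}$ and $\zeta\coloneqq\partial_\lambda\psi^\lambda$. Differentiating the Cauchy problem for $\psi^\lambda$ with respect to $\lambda$ gives $\zeta_{yy}+\gamma'(\psi^\lambda)\zeta=0$ on $[-h,0]$ with $\zeta(0)=0$ and $\zeta_y(0)=1$, while differentiating \eqref{eq:beta} yields
\begin{align*}
\tilde\beta_{yy}+(\gamma'(\psi^\lambda)+\mu)\tilde\beta=-\gamma''(\psi^\lambda)\,\zeta\,\beta,\qquad\tilde\beta(-h)=\tilde\beta(0)=0.
\end{align*}
Multiplying the equation for $\tilde\beta$ by $\beta$, the equation for $\beta$ by $\tilde\beta$, subtracting, and integrating from $-h$ to $0$, the boundary terms leave
\begin{align*}
\tilde\beta_y(0)=-\int_{-h}^0\gamma''(\psi^\lambda)\,\zeta\,\beta^2\,dy.
\end{align*}
Combined with $d_\lambda=\tilde\beta_y(0)-\lambda^{-2}\gamma(0)+2\lambda^{-3}g$, which is read off from \eqref{eq:d(k,lambda)}, and eliminating $\gamma(0)=\lambda^{-1}g-\lambda\beta_y(0)$ by means of $d(\mu,\lambda)=0$, one arrives at
\begin{align*}
\lambda\,d_\lambda(\mu,\lambda)=\beta_y(0)+\lambda^{-2}g-\lambda\int_{-h}^0\gamma''(\psi^\lambda)\,\zeta\,\beta^2\,dy.
\end{align*}

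It remains to show that the right-hand side is strictly positive in both cases (i) and (ii); this is the delicate step and is where $\sup\gamma'<\pi^2/(4h^2)$ together with $\mu\le 0$ enters, yielding the pointwise bound $\gamma'(\psi^\lambda)+\mu<(\pi/(2h))^2$ on $[-h,0]$. A Sturm comparison of $\beta$ against the trial function $\cos(\pi y/(2h))$, whose only zero in $[-h,0]$ lies at $y=-h$, gives $\beta>0$ on $(-h,0]$, and a Wronskian computation at $y=0$ then forces $\beta_y(0)>0$. Analogously, since $\zeta(0)=0$ and $\zeta_y(0)=1$ the function $\zeta$ is negative just below $y=0$; a comparison with a shifted sine rules out any further zero in $(-h,0)$, so $\zeta<0$ there. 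Consequently $-\lambda\gamma''(\psi^\lambda)\zeta\beta^2$ is the product of two nonpositive quantities ($-\lambda\gamma''$ and $\zeta$) and one nonnegative quantity ($\beta^2$), hence nonnegative in both cases (i) and (ii); since $\beta_y(0)+\lambda^{-2}g>0$ strictly, one concludes $\lambda\,d_\lambda>0$, so that $d_\lambda>0$ in case (i) and $d_\lambda<0$ in case (ii).

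The main obstacle is the sign analysis of $\beta_y(0)$ and of $\zeta$. The strengthened bound $\pi^2/(4h^2)$ (rather than the first Dirichlet eigenvalue $\pi^2/h^2$ of $-\partial_y^2$ on $[-h,0]$ appearing in Proposition \ref{prop:solvability_disprel}) is exactly what is required to guarantee that the half-period of the comparison trigonometric functions is at least $2h$, preventing any sign change of the comparison solutions and therefore, by Sturm comparison, of $\beta$ and $\zeta$, on the full interval $(-h,0)$.
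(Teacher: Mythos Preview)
Your proof is correct and follows essentially the same strategy as the paper: establish the signs of $\beta$ and of $\zeta=\partial_\lambda\psi^\lambda$ under the bound $\sup\gamma'<\pi^2/(4h^2)$, substitute $\gamma(0)=\lambda^{-1}g-\lambda\beta_y(0)$ from $d(\mu,\lambda)=0$, and read off the sign of $d_\lambda$. The paper packages the sign arguments into a dedicated maximum-principle lemma (via the substitution $v=u/\cos(r(y-y_0))$), which it applies four times---to get $\beta'(0)>0$, $\beta\ge0$, $\zeta<0$, and finally $\partial_\lambda\beta\le0$ on $[-h,0]$, whence $(\partial_\lambda\beta)'(0)\ge0$. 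You instead obtain the signs of $\beta$ and $\zeta$ by direct Sturm/Wronskian comparison with $\cos(\pi y/(2h))$, and you bypass any analysis of $\partial_\lambda\beta$ itself by the Lagrange identity, which yields the closed formula $\tilde\beta_y(0)=-\int_{-h}^0\gamma''(\psi^\lambda)\,\zeta\,\beta^2\,dy$ and makes the sign transparent once $\zeta<0$ and $\beta\ge0$ are known. Both routes land on the same identity $\lambda\,d_\lambda=\beta_y(0)+\lambda^{-2}g+\lambda\,\tilde\beta_y(0)$; your Lagrange trick is a pleasant shortcut that replaces one application of the maximum principle by an integration by parts.
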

Finally, for an investigation of the conditions for local bifurcation for certain examples of $\gamma$ we refer to Appendix \ref{appx:LocalBifurcation:cond}.

\section{Global bifurcation}\label{sec:GlobalBifurcation}
\subsection{General conclusions}
Having quickly considered local bifurcations, let us now turn to global bifurcation, which is one of the main motivations of this paper and of our formulation \enquote{identity plus compact}.

We apply now the corresponding global bifurcation theorem by Rabinowitz \cite[Theorem II.3.3]{Kielhoefer}, stated for convenience as Theorem \ref{thm:Rabinowitz} in Appendix \ref{appx:GlobalBif}, to our problem. Here, we emphasize that we only restrict ourselves to solutions satisfying \eqref{eq:additional_requirements}, which correspond to meaningful solutions of \eqref{eq:OriginalEquations}; notice that it is not impossible that there might be (irrelevant) solutions of $F(\lambda,q,w,\phi)=0$ for which \eqref{eq:additional_requirements} is not satisfied and thus Lemma \ref{lma:Reformulation} does not provide equivalence to the original equations \eqref{eq:OriginalEquations}.

\begin{theorem}\label{thm:GlobalBifurcation}
	Assume that there exists $\lambda_0\neq 0$ such that $F_{(q,w,\phi)}(\lambda,q,w,\phi)$ has an odd crossing number at $(\lambda_0,0,0,0)$. Let $\mathfrak S$ denote the closure of the set of nontrivial solutions of $F(\lambda,q,w,\phi)=0$ in $\R\times X$ satisfying \eqref{eq:additional_requirements} (that is, solutions giving rise to a proper water wave) and $\Cc$ denote the connected component of $\mathfrak S$ to which $(\lambda_0,0,0,0)$ belongs. Then one of the following alternatives occurs:
	\begin{enumerate}[label=(\roman*)]
		\item $\Cc$ is unbounded in the sense that there exists a sequence of points $(\lambda_n,q_n,w_n,\phi_n)\in\Cc\cap\Om$ such that
		\begin{enumerate}[label=(\alph*)]
			\item $|\lambda_n|\to\infty$, or
			\item $\|w_n\|_{C_\per^{0,\delta}(\R)}\to\infty$ for any $\delta\in(5/6,1]$, or
			\item $\|\gamma((\phi_n+\psi^{\lambda_n})\circ H[w_n+h]^{-1})\|_{L^p(\Omega_{w_n}^*)}\to\infty$ for any $p>1$ (that is, the total vorticity in each copy of the fluid domain measured in $L^p$ is unbounded)
		\end{enumerate}
		as $n\to\infty$;
		\item $\Cc$ contains a point $(\bar\lambda,0,0,0)$ with $\bar\lambda\neq\lambda_0$;
		\item $\Cc\cap\Om$ contains a sequence $(\lambda_n,q_n,w_n,\phi_n)$ such that $q_n+\lambda_n^2/2-g\max_\R w_n\to0$ as $n\to\infty$, that is, a wave of greatest height is approached;
		\item $\Cc\cap\Om$ contains a sequence $(\lambda_n,q_n,w_n,\phi_n)$ such that $\min_\R\K(w_n)\to0$  as $n\to\infty$;
\item $\Cc$ contains a point $(\lambda,q,w,\phi)$ such that
		\[x\mapsto(x+(\Ch w)(x),w(x)+h)\mathrm{\ is\ \textit{not}\ injective\ on\ }\R,\]
		that is, self-intersection of the surface profile occurs;
		\item $\Cc$ contains a point $(\lambda,q,w,\phi)$ such that there exists $x\in\R$ with
		\[w(x)=-h,\]
		that is, intersection of the surface profile with the flat bed occurs.
	\end{enumerate}
\end{theorem}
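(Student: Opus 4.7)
The plan is to apply Rabinowitz's global bifurcation theorem (Theorem \ref{thm:Rabinowitz} in Appendix \ref{appx:GlobalBif}) to $F=\mathrm{id}-\M$, exploiting the \enquote{identity plus compact} structure from Lemma \ref{lma:M_prop}. Since that lemma yields compactness only on $\Om_\varepsilon$ for each fixed $\varepsilon>0$ and since the physically meaningful solutions must also satisfy \eqref{eq:additional_requirements}, the natural domain is the open set
\[
	\U\coloneqq\Om\cap\bigl\{(\lambda,q,w,\phi)\in\R\times X:\eqref{eq:additional_requirements}\text{ holds}\bigr\},
\]
which admits the exhaustion $\U=\bigcup_{\varepsilon>0}\U_\varepsilon$ by subsets contained in $\Om_\varepsilon$ on which the injectivity and $w>-h$ conditions hold with uniform margin. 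Feeding the odd crossing number hypothesis and this exhaustion into Rabinowitz's theorem produces a continuum $\Cc$ of solutions satisfying an abstract trichotomy: either $\Cc$ is unbounded in $\R\times X$, or $\Cc$ meets a second bifurcation point on the trivial branch (this is alternative (ii)), or $\Cc$ has a limit point on $\partial\U$.

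For the third case I would decompose $\partial\U$ according to which defining condition of $\U$ fails. This yields five candidate scenarios: $\min_\R\K(w)\to0$ (alternative (iv)), $\min_\R(q+\lambda^2/2-gw)\to0$ (alternative (iii)), loss of injectivity of $x\mapsto(x+(\Ch w)(x),w(x)+h)$ (alternative (v)), $\min_\R w\to-h$ (alternative (vi)), and finally $\min_\R|\SL\partial_y\A+\lambda|\to0$. The point that no separate clause is needed for the last \enquote{surface stagnation in the flattened domain} scenario is the Bernoulli identity $\K(w)=R$ from \eqref{eq:OriginalBernoulliFlat}: provided $q+\lambda^2/2-gw$ stays bounded below, a vanishing of the numerator of $R$ at some point forces $\K(w)$ itself to vanish there, so that (iv) occurs simultaneously and absorbs this case.

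For the unbounded alternative, I would argue by contraposition: assume $|\lambda_n|$, $\|w_n\|_{C_\per^{0,\delta}(\R)}$ for some fixed $\delta>5/6$, and the $L^p$-norm of the vorticity are all bounded, and that alternatives (ii)--(vi) do not occur, so the margins defining $\U_\varepsilon$ stay uniformly positive along the sequence. I then run a Schauder bootstrap to produce a full $X$-bound on $(q_n,w_n,\phi_n)$, contradicting unboundedness. The chain is: first, the harmonic extension formula \eqref{eq:V_explicit} together with interior elliptic estimates controls $V$; second, $W^{2,p}$- and then $C^{2,\alpha}$-estimates applied to \eqref{eq:def_A}, using the assumed $L^p$-control on the vorticity, give a bound on $\A$; third, the definition \eqref{eq:def_R} of $R$ and the uniform lower bound on the denominator yield Hölder bounds on $R$; fourth, the Bernoulli identity $\K(w)=R$, combined with the uniform lower bound on $\K(w)$, can be solved algebraically for $w'$ via the formulas in the proof of Lemma \ref{lma:Reformulation} (using that $\Ch$ is an isomorphism on $C_{0,\per}^{0,\beta}(\R)$), delivering a $C_\per^{1,\alpha}$-bound on $w$; fifth, Schauder applied once more to \eqref{eq:def_A} upgrades $\phi=\A$ to the space defining $X$, and the first component of \eqref{eq:F=0} then bounds $q$.

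The main obstacle is the closure of this bootstrap. The exponent $5/6$ in alternative (i)(b) is not arbitrary: it is the threshold that guarantees that the Hölder regularity one loses in successively passing from $w$ to $V$, to $\A$, to $R$, and back to $w'$ via the nonlinear Bernoulli identity still suffices to recover $C_\per^{1,\alpha}$-control on $w$ without circularity, and verifying this balance is the technical heart of the proof. A secondary difficulty is reconciling the exhaustion $\U=\bigcup_\varepsilon\U_\varepsilon$ with the standard statement of Rabinowitz's theorem (which is typically given on a fixed open set carrying compactness globally); this is precisely the role of the variant recorded in Appendix \ref{appx:GlobalBif}, which is tailored to operators that are only locally \enquote{identity plus compact} on an open subset of the ambient Banach space.
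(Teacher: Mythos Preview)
Your proposal is correct and follows essentially the same route as the paper: apply Rabinowitz on the sets $\Om_\varepsilon$ (the paper does not fold \eqref{eq:additional_requirements} into $\mathcal U$ but instead disposes of (v) and (vi) first, which is equivalent to your choice), decompose the boundary cases, absorb the surface-stagnation scenario into (iii)/(iv) via the Bernoulli identity \eqref{eq:OriginalBernoulliFlat} exactly as you indicate, and then close the unboundedness alternative by a bootstrap. The one step you flag but leave open is also where the paper spends its effort: the circularity $w\to V\to\A\to R\to w$ is broken not by a straight regularity count but by estimating $\|w\|_{C^{1,\kappa}}$ in terms of a high power of $\|w\|_{C^{1,\beta}}$ with $\beta\ll\kappa$, interpolating $C^{1,\beta}$ between $C^{0,\kappa}$ and $C^{1,\kappa}$, and absorbing the resulting $\|w\|_{C^{1,\kappa}}$-term on the left via Young's inequality; the threshold $\delta>5/6$ emerges precisely as the condition $(6+\eta)(1-\delta)<1$ for small $\eta>0$.
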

\begin{proof}
	As was already observed in Lemma \ref{lma:M_prop}, our nonlinear operator $F$ is of class $C^1$ and admits the form \enquote{identity plus compact} on each $\Om_\varepsilon$, $\varepsilon>0$, as defined in \eqref{eq:def_Omeps}. For each $\varepsilon>0$, we can thus apply Theorem \ref{thm:Rabinowitz} with $\mathcal U$ chosen to be the interior of $\Om_\varepsilon$. Suppose now that neither alternative (v) nor (vi) is valid. Then, on each $\Om_\varepsilon$, $\Cc$ coincides with its counterpart obtained from Theorem \ref{thm:Rabinowitz}. Since $\varepsilon>0$ is arbitrary and $\Om=\bigcup_{\varepsilon>0}\Om_\varepsilon$, it is evident that, whenever $\Cc$ is bounded in $\R\times X$ and (ii) fails to hold,
there exists a sequence $(\lambda_n,q_n,w_n,\phi_n)$ in $\Cc\cap\Om$ such that
	\begin{subequations}\label{eq:BoundaryAlternative}
	\begin{align}
		\min_{\R}\K(w_n)&\to0,\text{ or}\label{eq:BoundaryAlternative1}\\
		\min_{\R}|\SL\partial_y\phi_n+\lambda_n|&\to0,\text{ or}\label{eq:BoundaryAlternative2}\\
		q_n+\lambda_n^2/2-g\max_\R w_n&\to 0,\label{eq:BoundaryAlternative3}
	\end{align}
	\end{subequations}
	as $n\to\infty$. Notice, however, that if \eqref{eq:BoundaryAlternative1} fails to hold but \eqref{eq:BoundaryAlternative2} does hold, then automatically also \eqref{eq:BoundaryAlternative3} is satisfied in view of \eqref{eq:OriginalBernoulliFlat}. Therefore, we can ignore \eqref{eq:BoundaryAlternative2}.

	It remains to investigate alternative (i) further. Let us recall at this point the definition of $X$, $F$, and $\M$ given in \eqref{eq:def_X}, \eqref{eq:def_F}, and \eqref{eq:def_M}, respectively. In order to show that (i) can be stated as above, we have to prove that, in view of alternative (i) of Theorem \ref{thm:Rabinowitz}, $\Cc$ is bounded in $\R\times X$ if (a)--(c) (and also the other alternatives (ii),(iii),(iv)) fail to hold. In particular, we shall assume that $(\lambda,q,w,\phi)\in\Cc\cap\Om$, the $\lambda$'s are bounded in $\R$, the $w$'s are bounded in $C_\per^{0,\delta}(\R)$ for some $\delta\in(5/6,1)$, $\|\gamma((\phi+\psi^{\lambda})\circ H[w+h]^{-1})\|_{L^p(\Omega_w^*)}$ remains bounded for some $p>1$, and
	\[\K(w)\ge\varepsilon,\quad q+\lambda^2/2-gw\ge\varepsilon,\quad|\SL\phi_y+\lambda|=\K(w)\sqrt{q+\lambda^2/2-gw}\ge\varepsilon\]
	for some $\varepsilon>0$. Since the  $L^\infty$-norms of the $w$'s are bounded, the Lebesgue measure of $\Omega_w^*$ is bounded so that we can assume without loss of generality that $p$ is close enough to $1$ such that
	\[\eta\coloneqq2-2/p<\min\{1/(1-\delta)-6,\alpha\}\in(0,\infty).\]
	We now consider $\delta$, $\varepsilon$, and $\eta$ fixed, and let $\beta\in(0,\eta]$, $r,t\in(1,\infty)$ to be chosen later. In the following, $c>0$ denotes some generic constants that may depend on $\delta$, $\varepsilon$, $\eta$, $\beta$, $r$, $t$, $|\lambda|$, and $\|\gamma((\phi+\psi^{\lambda})\circ H[w+h]^{-1})\|_{L^p(\Omega_w^*)}$, and can change from step to step. We will also frequently make use of Young's inequality $|ab|\le|a|^s/s+|b|^{s'}/s'$ for $a,b\in\R$ and $s\in(1,\infty)$, where $1/s+1/s'=1$; we let $r'$ and $t'$ be such that $1/r+1/r'=1$ and $1/t+1/t'=1$. First note that
	\[|\gamma(s)|\le c|s|+c,\quad s\in\R,\]
	and
	\[\|V[w+h]\|_{C_\per^{1,\beta}(\overline{\Omega_h})}\le c\|w\|_{C_\per^{1,\beta}(\R)}+c\]
	similarly to \eqref{eq:est_V}. Now, along $\Cc\cap\Om$ we have $\phi=\A(\lambda,w,\phi)$ and thus
	\begin{align*}
		&\|\phi\|_{C_\per^{0,\eta}(\overline{\Omega_h})\cap H_\per^1(\Omega_h)}\\
		&\le c\|\phi\|_{W^{2,p}(\Omega_h^*)}\le c\|-\gamma(\phi+\psi^\lambda)|\nabla V[w+h]|^2+\gamma(\psi^\lambda)\|_{L^p(\Omega_h^*)}\\
		&\le c\|\gamma(\phi+\psi^\lambda)|\nabla V[w+h]|^{1/p}\|_{L^p(\Omega_h^*)}\|\nabla V[w+h]\|_\infty^{1+\eta/2}+c\\
		&\le c\|\gamma((\phi+\psi^{\lambda})\circ H[w+h]^{-1})\|_{L^p(\Omega_w^*)}\left(\|w\|_{C_\per^{1,\beta}(\R)}^{1+\eta/2}+1\right)+c\\
		&\le c\|w\|_{C_\per^{1,\beta}(\R)}^{r(1+\eta/2)}+c
	\end{align*}
	after using Sobolev's embedding, the Calderón--Zygmund inequality (see \cite[Chapter 9]{GilbargTrudinger}; notice that on the right-hand side the term $\|\phi\|_{L^p(\Omega_h^*)}$ can be left out because of unique solvability of the Dirichlet problem associated to $\Delta$), $2-1/p=1+\eta/2$, the Schauder estimate for $V[w+h]$, and a change of variables via $H[w+h]$. We will now derive somewhat refined versions of \eqref{eq:est_R}, \eqref{eq:est_M1}. First, we infer the Schauder estimate
	\begin{align}\label{eq:est_phi}
		\|\phi\|_{C_\per^{2,\beta}(\overline{\Omega_h})}&\le c\left(\|\phi\|_{C_\per^{0,\beta}(\overline{\Omega_h})}+1\right)\|V[w+h]\|_{C_\per^{1,\beta}(\overline{\Omega_h})}^2+c\\
		&\le c\|\phi\|_{C_\per^{0,\beta}(\overline{\Omega_h})}^{\frac{2}{r(1+\eta/2)}+1}+c\|V[w+h]\|_{C_\per^{1,\beta}(\overline{\Omega_h})}^{2+r(1+\eta/2)}+c\nonumber\\
		&\le c\|w\|_{C_\per^{1,\beta}(\R)}^{2+r(1+\eta/2)}+c.\nonumber
	\end{align}
	Next, by the original Bernoulli equation \eqref{eq:OriginalBernoulliFlat} we have
	\[R(\lambda,q,w,\phi)=\K(w)\ge\varepsilon.\]
	Then, because of $w=\M^2(\lambda,q,w,\phi)$ along $\Cc\cap\Om$ and using the Lipschitz continuity of $s\mapsto\ln s$ and $s\mapsto 1/\sqrt{s}$ on $[\varepsilon,\infty)$ for the terms involving $R(\lambda,q,w,\phi)$ and $q+\lambda^2/2-gw$, for $\kappa\in\{\alpha,\delta\}$,
	\begin{align}
		&\|w\|_{C_\per^{1,\kappa}(\R)}\le c\|w'\|_{C_\per^{0,\kappa}(\R)}\le c\|R(\lambda,q,w,\phi)\|_{C_\per^{0,\kappa}(\R)}^2\nonumber\\
		&\le c\left(\|\phi\|_{C_\per^{2,\beta}(\overline{\Omega_h})}^2+1\right)\left(\|w\|_{C_\per^{0,\kappa}(\R)}^2+1\right)\nonumber\\
		&\le c\left(\|w\|_{C_\per^{1,\beta}(\R)}^{4+r(2+\eta)}+1\right)\left(\|w\|_{C_\per^{0,\kappa}(\R)}^2+1\right)\label{eq:w_est0}\\
		&\le c\left(\|w\|_{C_\per^{1,\kappa}(\R)}^{(4+r(2+\eta))(1+\beta-\kappa)}\|w\|_{C_\per^{0,\kappa}(\R)}^{(4+r(2+\eta))(\kappa-\beta)}+1\right)\left(\|w\|_{C_\per^{0,\kappa}(\R)}^2+1\right)\nonumber\\
		&\le\frac12\|w\|_{C_\per^{1,\kappa}(\R)}^{t(4+r(2+\eta))(1+\beta-\kappa)}\label{eq:w_est}\\
		&\phantom{=\;}+\left(\frac{2}{t}\right)^{t'/t}\frac{1}{t'}\left(c\|w\|_{C_\per^{0,\kappa}(\R)}^{(4+r(2+\eta))(\kappa-\beta)}\left(\|w\|_{C_\per^{0,\kappa}(\R)}^2+1\right)\right)^{t'}\nonumber\\
		&\phantom{=\;}+c\left(\|w\|_{C_\per^{0,\kappa}(\R)}^2+1\right)\nonumber
	\end{align}
	where we demanded $\beta\le\kappa$ and interpolated $C^{1,\beta}$ between $C^{0,\kappa}$ and $C^{1,\kappa}$. We now choose $\beta\le\min\{\delta,\eta\}\le\min\{\alpha,\delta\}$ close enough to $0$ and $r,t$ close enough to $1$ such that
	\[t(4+r(2+\eta))(1+\beta-\delta)=1.\]
	This is possible since
	\[(4+(2+\eta))(1-\delta)<1\quad\Leftrightarrow\quad\eta<\frac{1}{1-\delta}-6.\]
	Hence, by \eqref{eq:w_est} for $\kappa=\delta$,
	\[\frac12\|w\|_{C_\per^{1,\delta}(\R)}\le c\|w\|_{C_\per^{0,\delta}(\R)}^c+c.\]
	Together with \eqref{eq:w_est0} for $\kappa=\alpha$, this yields
	\[\|w\|_{C_\per^{1,\alpha}(\R)}\le c\left(\|w\|_{C_\per^{1,\delta}(\R)}^{4+r(2+\eta)}+1\right)\left(\|w\|_{C_\per^{1,\delta}(\R)}^2+1\right)\le c\|w\|_{C_\per^{0,\delta}(\R)}^c+c.\]
	This, inserting this inequality in \eqref{eq:est_phi}, and the obvious estimate
	\[|q|\le c\left(\|\phi\|_{C_\per^{1,\alpha}(\overline{\Omega_h})}^2+\|w\|_{L^\infty(\R)}+1\right)\]
	in view of the Bernoulli equation \eqref{eq:OriginalBernoulliFlat} completes the proof.
\end{proof}
Let us discuss the conditions and statements in Theorem \ref{thm:GlobalBifurcation} in more detail:

\begin{remark}\makeatletter\hyper@anchor{\@currentHref}\makeatother\label{rem:GlobalBifurcation}
	\begin{enumerate}[label=(\alph*),leftmargin=*]
		\item It is well-known that $F_{(q,w,\phi)}(\lambda,q,w,\phi)$ has an odd crossing number at $(\lambda_0,0,0,0)$ provided $F_{(q,w,\phi)}(\lambda_0,0,0,0)$ is a Fredholm operator with index zero and one-dimensional kernel, and the transversality condition holds. These properties, in turn, are consequences of the hypotheses of Theorem \ref{thm:LocalBifurcation}; notice that we needed there the assumption $\gamma\in C_{\text{loc}}^{2,1}(\R)$. However, it is far from necessary to ensure these conditions of the local Crandall--Rabinowitz theorem \ref{thm:CrandallRabinowitz} in order to meet the condition on an odd crossing number. For example, if the kernel is of odd dimension, there is a criterion which includes a certain transversality condition and is thus somewhat an analogue of the \enquote*{one-dimensional conditions} in Crandall--Rabinowitz; see \cite[pp. 213 ff.]{Kielhoefer}. We will however not pursue this issue further in this paper.
		\item We briefly discuss the alternative (i)(a), that is, $\lambda$ is unbounded. Physically speaking, this means that the surface velocity of the underlying (trivial) laminar flow is unbounded. We can also relate this to the relative mass flux $m(\lambda)$, at least in certain cases for $\gamma$: First, if $\lambda>0$ and $\gamma\ge0$, then clearly $\psi^\lambda_y\ge\lambda$ and hence $m(\lambda)=-\psi^\lambda(-h)\ge\lambda h$. Thus, $m(\lambda)\to\infty$ as $\lambda\to\infty$ provided $\gamma\ge0$. Similarly, $m(\lambda)\to-\infty$ as $\lambda\to-\infty$ provided $\gamma\le0$. For more discussion of these situations see Section \ref{sec:downstream}. Second, if $\sup\gamma'<\pi^2/h^2$, then $m(\lambda)\to\pm\infty$ as $\lambda\to\pm\infty$, as was already observed in \eqref{eq:est_m'} in Remark \ref{rem:TrivialSolutions}. Notice, however, that in general we do not necessarily have $|m(\lambda)|\to\infty$ as $\lambda\to\infty$ (or $-\infty$), recalling once again Remark \ref{rem:TrivialSolutions} where we saw that $m(\lambda)=0$ for any $\lambda\in\R$ if $\gamma(\psi)=a\psi$ for a Dirichlet eigenvalue $a>0$ of $-\partial_y^2$ on $[-h,0]$.
		\item Also in view of Remark \ref{rem:TrivialSolutions}, the violation of alternative (ii)  only implies that there are on $\Cc$ no other configurations with a flat surface than the one we bifurcate from if $\sup\gamma'<\pi^2/h^2$. If this condition on $\gamma$ is violated, then we can not eliminate the possibility that $\Cc$ remains in the physically small regime (that is, $w$ close to $0$) although it contains points with large norm in our function space $\R\times X$. We will have a closer look at this in Section \ref{sec:nodalanalysis} when studying nodal properties.
		\item It is important to point out that, as a further advantage of our new reformulation, we could manage to lower the norm for blow-up of $w$ in alternative (i)(b) to $C^{0,\delta}$, $5/6<\delta\le1$. This seems to be the first time (except for the paper \cite{WahlenWeber21} of the present authors dealing with the capillary-gravity case) that this norm could be lowered that much (without restricting to a special case for $\gamma$ in the first place).
		\item If, for example, $\gamma$ is bounded, Theorem \ref{thm:GlobalBifurcation} remains obviously valid if alternative (i)(c) is left out.
\item We will say more in Appendix \ref{appx:degeneracy_conformal} about alternative (iv), that is, degeneracy of the conformal map. However, as clarified there, if we want to say more about this alternative, in particular by extracting a limiting configuration via some compactness, then we have to consider a more regular norm for $w$ in the unboundedness alternative (i)(b) -- this is in turn the reason why we defer the discussion to the appendix.\end{enumerate}
\end{remark}

\subsection{Deeper conclusions under a spectral assumption}\label{sec:nodalanalysis}
It is well-known (see, for example, the breakthrough papers \cite{ConsStr04,ConsStrVarv16}) that the pure gravity case enjoys the advantage over the capillary-gravity case that structural properties of the obtained solutions can be derived by means of applying certain maximum principles. We can also perform such a nodal analysis for our obtained solutions. To this end, however, we have to assume
\begin{align}\label{ass:Pruefergraph}
	\beta^{-(k_0\nu)^2,\lambda_0}>0\text{ on }(-h,0].
\end{align}
Notice that, by classical Sturm--Liouville theory for \eqref{eq:beta}, this condition equivalent to $-(k_0\nu)^2$ being smaller than the first Dirichlet eigenvalue of $-\partial_y^2-\gamma'(\psi^{\lambda_0})$ on $[-h,0]$. Thus, for example, \eqref{ass:Pruefergraph} is always satisfied whenever \eqref{eq:sup_gamma'_DirEV} holds (which in fact also guarantees \eqref{ass:SL-spectrum}). 

The strategy and the mathematical details of this section are similar to those in \cite{ConsStrVarv16}, however more general since we do not assume that $\gamma$ is a constant. We would also like to mention the recent paper \cite{Koz23} at this point. Throughout the rest of this Section \ref{sec:GlobalBifurcation}, we assume that the local theory has been set up by means of Theorem \ref{thm:LocalBifurcation} (and Remark \ref{rem:GlobalBifurcation}(a)), that is, we suppose that there exists $\lambda_0\ne0$ and $k_0\in\N$ such that the assumptions of Theorem \ref{thm:LocalBifurcation} are met for this $\lambda_0$ and $k_0$; therefore, due to Theorem \ref{thm:LocalBifurcation}, there is a local curve of solutions near $(\lambda_0,0,0,0)$.
\subsubsection*{Nodal analysis}
Let us denote by $\Cc_\pm$ the connected component of $(\Cc\cap\Om)\setminus\{(\lambda,q,w,\phi)\in\R\times X:w\equiv0\}$ to which $(\lambda^{\pm s_0},q^{\pm s_0},w^{\pm s_0},\phi^{\pm s_0})$ belongs, where $s_0\in(0,\varepsilon)$ is arbitrary and $\varepsilon>0$ as in Theorem \ref{thm:LocalBifurcation}. Notice that at this point it is not yet clear that $\Cc_+\ne\Cc_-$. Also, it is not always true that
$(\Cc\cap\Om)\setminus\{(\lambda,q,w,\phi)\in\R\times X:w\equiv0\}$ coincides with $(\Cc\cap\Om)\setminus\{(\lambda,q,w,\phi)\in\R\times X:q=0,w\equiv0,\phi\equiv0\}$. In fact, recalling Remark \ref{rem:TrivialSolutions}, this is true whenever \eqref{eq:sup_gamma'_DirEV} holds, but might be violated else.

For simplicity, we assume without loss of generality that $k_0=1$ for otherwise we could have done everything until this point with $(L/k_0,1)$ instead of $(L,k_0)$; notice also that by the uniqueness property in Theorem \ref{thm:LocalBifurcation} the local curves in the spaces with period $L$ and $L/k_0$ have to coincide, up to reparametrization, so that all solutions on the original local curve and thus on all of $\Cc_\pm$ are necessarily $L/k_0$-periodic. Moreover, for notational simplicity we will only consider $\Cc_+$ and the case $\lambda_0<0$, but an analogous result also holds for $\Cc_-$ and/or the case $\lambda_0>0$.
In fact, let us quickly remark that $\Cc_-$ coincides with $\Cc_+$ up to the translation $x\mapsto x+L/2$ of the horizontal variable, which leaves the space $\R\times X$ and the solution set invariant. Indeed, up to this translation, $\Cc_-$ and $\Cc_+$ coincide locally near $(\lambda_0,0,0,0)$ by virtue of \eqref{eq:CR_Ttheta} and the uniqueness statement in Theorem \ref{thm:LocalBifurcation}, and therefore also globally because $\Cc_-$ and $\Cc_+$ are both defined as connected components.

Let us study the following nodal properties for $(\lambda,q,w,\phi)\in\Cc_+$:
\begin{gather}
	w(x)>-h\quad\text{for all }x\in\R,\label{eq:NoIntersectionBed}\\
w'(x)<0\quad\text{for all }x\in(0,L/2),\label{eq:MonotoneCrestTrough}\\
	w\in C_{0,\per,\e}^{2,\alpha}(\R)\quad\text{and}\quad w''(0)<0,\;\quad w''(L/2)>0,\label{eq:MonotoneCrestTroughSecondOpen}\\
	0<x+(\Ch w)(x)<L/2\quad\text{for all }x\in(0,L/2),\label{eq:NoSelfIntersection}\\
	1+(\Ch w')(0)>0\quad\text{and}\quad 1+(\Ch w')(L/2)>0,\label{eq:NoSelfIntersectionSecondOpen}\end{gather}
(In general, one has to multiply the inequalities in \eqref{eq:MonotoneCrestTrough} and \eqref{eq:MonotoneCrestTroughSecondOpen} by $\mp\sgn\lambda_0$ when considering $(\lambda,q,w,\phi)\in\Cc_\pm$.) Here, \eqref{eq:NoIntersectionBed} means that no intersection of the free surface with the flat bed occurs and \eqref{eq:MonotoneCrestTrough} says that the height of the surface is strictly monotone from crest to trough. Moreover, \eqref{eq:NoSelfIntersection} is equivalent to the property that no self-intersection of the free surface occurs, provided \eqref{eq:MonotoneCrestTrough}; see \cite[Lemma 11]{ConsStrVarv16}. Notice that the condition \eqref{eq:NoIntersectionBed} defines an open set in $C_{0,\per,\e}^{0,\alpha}(\R)$, the combination of \eqref{eq:MonotoneCrestTrough} and \eqref{eq:MonotoneCrestTroughSecondOpen} an open set in $C_{0,\per,\e}^{2,\alpha}(\R)$, and the combination of \eqref{eq:NoSelfIntersection} and \eqref{eq:NoSelfIntersectionSecondOpen} an open set in $C_{0,\per,\e}^{1,\alpha}(\R)$.

Another important quantity we introduce is, for any $(\lambda,q,w,\phi)\in\Cc_+$,
\begin{align}\label{eq:def_f}
	f\coloneqq\frac{V_x(\phi_y+\psi^\lambda_y)-V_y\phi_x}{V_x^2+V_y^2}=-\psi_X\circ H,
\end{align}
to which we will apply later sharp maximum principles; here, $\psi=(\phi+\psi^\lambda)\circ H^{-1}$ is the stream function in the physical domain. In particular, we will consider the property
\begin{align}\label{eq:f_pos}
	f>0\text{ on }(0,L/2)\times(-h,0].
\end{align}

Let us now first prove that \eqref{eq:NoIntersectionBed}--\eqref{eq:NoSelfIntersectionSecondOpen} and \eqref{eq:f_pos} hold for solutions that are close to $(\lambda_0,0,0,0)$.
\begin{lemma}\label{lma:LocalNodal}
	There exists a neighborhood $\mathcal N$ of $(\lambda_0,0,0,0)$ in $\R\times X$ such that \eqref{eq:NoIntersectionBed}--\eqref{eq:NoSelfIntersectionSecondOpen} and \eqref{eq:f_pos} hold for all $(\lambda,q,w,\phi)\in\Cc_+\cap\mathcal N$.
\end{lemma}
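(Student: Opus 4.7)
The plan is to apply the local uniqueness in Theorem \ref{thm:LocalBifurcation}: in a sufficiently small neighborhood $\tilde{\mathcal N}$ of $(\lambda_0,0,0,0)$ in $\R\times X$, every nontrivial element of $\Cc_+$ lies on the bifurcation curve $s\mapsto(\lambda^s,q^s,w^s,\phi^s)$ with $s>0$ small. It then suffices to verify each of \eqref{eq:NoIntersectionBed}--\eqref{eq:NoSelfIntersectionSecondOpen} and \eqref{eq:f_pos} along that curve for small $s>0$, using the asymptotic expansion
\[
(q^s,w^s,\phi^s)=(0,s\T(\lambda_0)\theta)+o(s)
\]
in $\R\times C^{2,\alpha}_{0,\per,\e}(\R)\times C^{2,\alpha}_{\per,\e}(\overline{\Omega_h})$. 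With $k_0=1$ and $\lambda_0<0$, the $w$-component of the leading order is $w_*(x)\coloneqq-\lambda_0^{-1}\cos(\nu x)$; it has its crest at $x=0$ and its trough at $x=L/2$, with $w_*'<0$ strictly on $(0,L/2)$, $w_*''(0)=\nu^2/\lambda_0<0$, and $w_*''(L/2)=-\nu^2/\lambda_0>0$.

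Items \eqref{eq:NoIntersectionBed}--\eqref{eq:NoSelfIntersectionSecondOpen} will then follow routinely from the $C^{2,\alpha}$-convergence $w^s/s\to w_*$: \eqref{eq:NoIntersectionBed} because $w^s\to 0$ uniformly; \eqref{eq:MonotoneCrestTroughSecondOpen} because $w^s{}''(0)/s$ and $w^s{}''(L/2)/s$ inherit the signs of $w_*''$ at the endpoints; \eqref{eq:MonotoneCrestTrough} by combining strict negativity of $w_*'$ on any compact subinterval of $(0,L/2)$ with a Taylor expansion at the endpoints, where $w^s{}'(0)=w^s{}'(L/2)=0$ by evenness and $L$-periodicity and $w^s{}''$ has the already controlled sign; and \eqref{eq:NoSelfIntersection}--\eqref{eq:NoSelfIntersectionSecondOpen} because $\Ch w^s\to 0$ in $C^{1,\alpha}_{0,\per}(\R)$, so $x\mapsto x+(\Ch w^s)(x)$ is a small $C^1$-perturbation of the identity that fixes both $0$ and $L/2$ by the oddness and $L$-periodicity of $\Ch w^s$.

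For the positivity \eqref{eq:f_pos} I first linearize $f$ around the trivial solution, where $V=y+h$ gives $V_x=0$ and $V_y=1$, so $f\equiv 0$ and also $\partial_\lambda f$ vanishes there (being proportional to $V_x$). Substituting the leading-order expressions for $w^s$ and $\phi^s$ into \eqref{eq:def_f} via \eqref{eq:V_explicit}, a direct calculation in which the $\psi^{\lambda_0}_y$-terms cancel yields
\[
f^s/s\to f_*(x,y)=\nu\sin(\nu x)\,\beta^{-\nu^2,\lambda_0}(y)\quad\text{in }C^{1,\alpha},
\]
which by \eqref{ass:Pruefergraph} is strictly positive on $(0,L/2)\times(-h,0]$ but vanishes on the three boundary segments $\{x=0\}$, $\{x=L/2\}$, and $\{y=-h\}$. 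Uniform convergence alone is therefore insufficient to conclude pointwise positivity of $f^s$ near those segments; this is the main obstacle of the proof.

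To resolve it, one notes that $f^s$ itself vanishes on the same three segments for every solution in $\Cc_+$: on $\{y=-h\}$ because $\psi\equiv-m(\lambda)$ on the bed so $\psi_X\equiv 0$ there, and on $\{x=0\}\cup\{x=L/2\}$ by evenness of $\psi$ in $X$ about the crest and trough, which forces $\psi_X\equiv 0$ on the corresponding vertical symmetry planes in the flattened domain. Using the regularity $f^s\in C^{2,\alpha}$ from Proposition \ref{prop:3alpha_regularity}, one can factor out these first-order zeros and pass to the quotient
\[
\widetilde f^s(x,y)\coloneqq\frac{f^s(x,y)}{s\sin(\nu x)(y+h)},
\]
which extends continuously to $[0,L/2]\times[-h,0]$ and converges uniformly to $\nu\beta^{-\nu^2,\lambda_0}(y)/(y+h)$; this limit is bounded below by a positive constant on $[-h,0]$ because $\beta^{-\nu^2,\lambda_0}>0$ on $(-h,0]$ by \eqref{ass:Pruefergraph} and $(\beta^{-\nu^2,\lambda_0})'(-h)>0$ by ODE uniqueness for \eqref{eq:beta}. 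Hence $\widetilde f^s>0$ on $[0,L/2]\times[-h,0]$ for $s>0$ small, which yields \eqref{eq:f_pos}.
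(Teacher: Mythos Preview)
Your proof is correct and follows essentially the same approach as the paper's: reduce to the local curve via the uniqueness in Theorem \ref{thm:LocalBifurcation}, verify \eqref{eq:NoIntersectionBed}--\eqref{eq:NoSelfIntersectionSecondOpen} from the $C^{2,\alpha}$ expansion of $w^s$, and for \eqref{eq:f_pos} compute the leading order of $f^s$ (where the $\psi^{\lambda_0}_y$-terms cancel to give $s\nu\sin(\nu x)\beta^{-\nu^2,\lambda_0}(y)+o(s)$, exactly as in the paper's \eqref{eq:f_firstorder}) and use \eqref{ass:Pruefergraph}. The only cosmetic difference is that you phrase the boundary argument by factoring out $\sin(\nu x)(y+h)$ and passing to the quotient $\widetilde f^s$, whereas the paper argues more tersely via the nonvanishing of $\frac{d}{dx}\sin(\nu x)$ at $x\in\{0,L/2\}$ and of $\beta^{-\nu^2,\lambda_0}_y(-h)$; these are two ways of saying the same thing, and both rely on the same three nondegeneracy facts.
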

\begin{proof}
	According to Theorem \ref{thm:LocalBifurcation}, the map $(-\varepsilon,\varepsilon)\ni s\mapsto(\lambda^s,q^s,w^s,\phi^s)\in\R\times X$ is continuous. Since \eqref{eq:NoIntersectionBed}, \eqref{eq:NoSelfIntersection}, and \eqref{eq:NoSelfIntersectionSecondOpen} hold for $w=w^0\equiv0$, they also hold for $w^s$, $s\in(-\varepsilon,\varepsilon)$, after possibly shrinking $\varepsilon$. Moreover, by Theorem \ref{thm:LocalBifurcation} the map
	\[s\mapsto\begin{cases}w^s/s,&s\ne0,\\-\cos(\nu\cdot)/\lambda_0,&s=0,\end{cases}\]
	is continuous at $s=0$ from $(-\varepsilon,\varepsilon)$ to $C_{0,\per,\e}^{2,\alpha}(\R)$. Since furthermore $w'<0$ on $(0,L/2)$, $w''(0)<0$, and $w''(L/2)>0$ for $w=\cos(\nu\cdot)$, we conclude that \eqref{eq:MonotoneCrestTrough} and \eqref{eq:MonotoneCrestTroughSecondOpen} hold for $w^s$, $s\in(0,\varepsilon)$, after again possibly shrinking $\varepsilon$. This, together with the uniqueness property in Theorem \ref{thm:LocalBifurcation}, yields the claim for \eqref{eq:NoIntersectionBed}--\eqref{eq:NoSelfIntersectionSecondOpen}.
	
	As for \eqref{eq:f_pos}, we first notice that, according to Theorem \ref{thm:LocalBifurcation} and \eqref{eq:V_explicit},
	\[V^s\coloneqq V[w^s+h]=y+h-\frac{s\sinh(\nu(y+h))}{\lambda_0\sinh(\nu h)}\cos(\nu x)+o(s).\]
	Therefore, also recalling the expansion of $\phi^s$ in Theorem \ref{thm:LocalBifurcation},
	\begin{align}\label{eq:f_firstorder}
		&V_x^s(\phi_y^s+\psi^{\lambda^s}_y)-V_y^s\phi_x^s\\
		&=\frac{s\sinh(\nu(y+h))}{\lambda_0\sinh(\nu h)}\nu\sin(\nu x)\psi^{\lambda_0}_y(y)\nonumber\\
		&\phantom{=\;}+s\left(\beta^{-\nu^2,\lambda_0}(y)-\frac{\psi_y^{\lambda_0}(y)\sinh(\nu(y+h))}{\lambda_0\sinh(\nu h)}\right)\nu\sin(\nu x)+o(s)\nonumber\\
		&=s\nu\beta^{-\nu^2,\lambda_0}(y)\sin(\nu x)+o(s).\nonumber
	\end{align}
	Following the same argument as in the first part of this proof and additionally using \eqref{ass:Pruefergraph}, $\frac{d}{dx}\sin(\nu x)\ne0$ for $x\in\{0,L/2\}$ as well as $\beta^{-\nu^2,\lambda_0}_y(-h)>0$, we infer that
	\[V_x^s(\phi_y^s+\psi^{\lambda^s}_y)-V_y^s\phi_x^s>0\text{ on }(0,L/2)\times(-h,0]\]
	for $s\in(0,\varepsilon)$, after again possibly shrinking $\varepsilon$. This yields finally the claim also for \eqref{eq:f_pos}.
\end{proof}
Let us remark, importantly, that for the validity of \eqref{eq:f_pos} in Lemma \ref{lma:LocalNodal}, \eqref{ass:Pruefergraph} is sufficient \textit{and} necessary in view of \eqref{eq:f_firstorder}. In particular, there is no hope to carry out the following nodal analysis building upon $f$ and trying to continue \eqref{eq:f_pos} along $\Cc_+$ without assuming \eqref{ass:Pruefergraph} in the first place.

We now show that we can extend the nodal properties of Lemma \ref{lma:LocalNodal} along $\Cc_+$.\begin{lemma}\label{lma:nodal_alternatives_end}
	One of the following alternatives occurs:
	\begin{enumerate}[label=(\alph*)]
\item \eqref{eq:NoIntersectionBed}--\eqref{eq:NoSelfIntersectionSecondOpen} and \eqref{eq:f_pos} hold for all $(\lambda,q,w,\phi)\in\Cc_+$; 
		\item there exists $(\lambda,q,w,\phi)\in\Cc_+$ such that \eqref{eq:NoIntersectionBed}, \eqref{eq:MonotoneCrestTrough}, \eqref{eq:MonotoneCrestTroughSecondOpen}, and \eqref{eq:NoSelfIntersectionSecondOpen} are satisfied, and (instead of \eqref{eq:NoSelfIntersection})
		\begin{subequations}\label{eq:SelfIntersection1}
		\begin{gather}
				0<x+(\Ch w)(x)\le L/2\quad\text{for all }x\in(0,L/2),\\
				x_0+(\Ch w)(x_0)=L/2\quad\text{for some }x_0\in(0,L/2),
		\end{gather}
		\end{subequations}
that is, self-intersection of the free surface occurs exactly above a trough.
	\end{enumerate}
\end{lemma}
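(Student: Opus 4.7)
The plan is a standard continuation/connectedness argument. I let $N\subset\Cc_+$ be the set on which all the strict nodal inequalities \eqref{eq:NoIntersectionBed}--\eqref{eq:NoSelfIntersectionSecondOpen} together with \eqref{eq:f_pos} hold. By Lemma \ref{lma:LocalNodal}, $N$ is nonempty; since each of these conditions is open in $\R\times X$ (given the regularity $w\in C^{2,\alpha}$ and $\phi\in C^{2,\alpha}$ supplied by Proposition \ref{prop:3alpha_regularity}), the set $N$ is open in $\Cc_+$. If $N=\Cc_+$, alternative (a) holds. Otherwise, by connectedness of $\Cc_+$, there is a limit point $P_*=(\lambda_*,q_*,w_*,\phi_*)\in\Cc_+$ approached by $N$ but not lying in $N$. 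By continuity, at $P_*$ every nodal condition holds in the non-strict sense; my task is to promote each inequality back to a strict one, with the sole possible exception of \eqref{eq:NoSelfIntersection}, whose only allowed relaxation is exactly \eqref{eq:SelfIntersection1}.

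The main tool is the function $f_*$ of \eqref{eq:def_f}: differentiating $\Delta\psi=-\gamma(\psi)$ in the horizontal variable and pulling back by the conformal map $H_*$ (which commutes nicely with the Laplacian, cf.\ the remark after Lemma \ref{lma:M_prop}) gives the linear elliptic equation
\[
	\Delta f_*=|H_*'|^2\,\gamma'(\phi_*+\psi^{\lambda_*})\,f_*\quad\text{in }\Omega_h.
\]
By evenness in $x$ of $\phi_*$, $\psi^{\lambda_*}$ and $V_*$, $f_*$ is odd in $x$ and vanishes on $\{x=0\}\cup\{x=L/2\}$; it also vanishes on $\{y=-h\}$ because $\psi$ is constant along the bed. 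Since $f_*\ge0$ on the rectangle $(0,L/2)\times(-h,0)$ in the limit, the strong maximum principle (equivalently, Harnack) yields either $f_*\equiv0$ or $f_*>0$ in the open rectangle. The first option forces $\psi_X\equiv0$, hence $\psi=\psi(Y)$ and therefore a horizontal surface $w_*\equiv0$, which is excluded because $\Cc_+$ avoids such configurations by definition.

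With $f_*>0$ inside, Hopf's boundary point lemma on the sides $\{x=0\}$, $\{x=L/2\}$, $\{y=-h\}$ and Serrin's corner-point lemma at the four corners provide strict boundary-derivative information for $f_*$. The bridge to the nodal properties of $w_*$ is the identity obtained on $y=0$ by differentiating $\phi+\psi^\lambda=0$ in $x$,
\[
	f_*(x,0)=\frac{w_*'(x)\,\bigl(\phi_{*,y}(x,0)+\psi^{\lambda_*}_y(0)\bigr)}{|\nabla V_*(x,0)|^2},
\]
where the bracket does not vanish (by the no-surface-stagnation clause built into $\Om$) and retains constant sign along the connected set $\Cc_+$, inherited from $\psi^{\lambda_0}_y(0)=\lambda_0<0$. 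Hence $f_*(x,0)>0$ is equivalent to $w_*'(x)<0$ on $(0,L/2)$, and the boundary/corner data for $f_*$ translate into the strict forms of \eqref{eq:MonotoneCrestTrough} and \eqref{eq:MonotoneCrestTroughSecondOpen}. The conditions \eqref{eq:NoIntersectionBed} and \eqref{eq:NoSelfIntersectionSecondOpen} are then recovered by applying the strong maximum principle and Hopf's lemma directly to the harmonic function $V_*$, whose boundary values are $w_*+h\ge0$ on top and $0$ at the bed.

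The genuine obstacle is the nonlocal condition \eqref{eq:NoSelfIntersection}, which cannot be handled by a pointwise maximum principle on $f_*$. Once strict monotonicity $w_*'<0$ on $(0,L/2)$ and the corner signs in \eqref{eq:MonotoneCrestTroughSecondOpen} have been secured, the argument of \cite[Lemma 11]{ConsStrVarv16} shows that the only way the sandwich $0\le x+(\Ch w_*)(x)\le L/2$ can fail to be strict for some $x\in(0,L/2)$ is that the upper inequality becomes equality at some interior $x_0$, placing $P_*$ exactly in the configuration \eqref{eq:SelfIntersection1} of alternative (b). The most delicate step in executing this plan is the bookkeeping of the Serrin corner-point arguments at the four corners of the rectangle and checking that the regularity from Proposition \ref{prop:3alpha_regularity} suffices to apply these boundary-point lemmas uniformly; the remaining passages consist in carefully matching the signs coming from Hopf on $f_*$ with the algebraic identities linking $f_*$, $w_*'$, $V_*$, and $1+\Ch w_*'$ on the boundary $\{y=0\}$.
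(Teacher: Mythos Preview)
Your approach matches the paper's: a connectedness argument on $\Cc_+$, the strong maximum principle together with Hopf and Serrin for $f$, and separate arguments using the harmonic functions $V$ and $U$ for the remaining conditions. A few points of execution deserve more care than your sketch provides. The elliptic equation has the opposite sign, $\Delta f=-\gamma'(\phi+\psi^\lambda)\,|\nabla V|^2 f$, although this does not affect the argument. More importantly, the paper defines $f$ as the $C^{1,\beta}$-limit of the $f_n$ (each of which is well defined since $|\nabla V_n|>0$ along the approximating sequence) rather than via the quotient formula at the limit point; at $P_*$ the map $H_*$ need no longer be a diffeomorphism, so $|\nabla V_*|$ could vanish in the interior and your direct definition of $f_*$ may fail there. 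The Hopf step for \eqref{eq:MonotoneCrestTrough} is not merely a matter of matching signs: one must \emph{compute} from the formula for $f$ near $y=0$ that $f_y(x_0,0)=0$ whenever $w'(x_0)=0$, and it is this \emph{equality} that contradicts the strict sign required by Hopf; similarly, at the corners $(0,0)$ and $(L/2,0)$ one must verify that the full second-order jet of $f$ vanishes before Serrin yields a contradiction. Your treatment of \eqref{eq:NoIntersectionBed} and \eqref{eq:NoSelfIntersectionSecondOpen} as independent applications of Hopf to $V_*$ misses their interplay: the paper uses the Cauchy--Riemann identity $V_y=U_x=1+\Ch w'$ to transport the non-strict inequality in \eqref{eq:NoSelfIntersectionSecondOpen} into a contradiction with the strict Hopf sign for $V$ at a would-be contact with the bed, and conversely recovers strictness in \eqref{eq:NoSelfIntersectionSecondOpen} from $\K(w)>0$ (membership in $\Om$) combined with $V_x=0$ at $x=0,L/2$. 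Finally, the lower inequality in \eqref{eq:NoSelfIntersection} is ruled out by applying Hopf directly to the harmonic function $U$ (forcing $w'(x_0)>0$, a contradiction with the already recovered \eqref{eq:MonotoneCrestTrough}), not by invoking \cite[Lemma~11]{ConsStrVarv16}.
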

\begin{proof}
By definition, $\Cc_+\subset\Om$ and in particular $w\in C_{0,\per,\e}^{2,\alpha}(\R)$ for all $(\lambda,q,w,\phi)\in\Cc_+$. Notice also that, since $\sgn\psi^{\lambda_0}_y(0)=\sgn\lambda_0=-1$ and $\SL\phi_y+\lambda\ne 0$, we have 
	\begin{align}\label{eq:sign_normalder_psi}
		\sgn(\SL\phi_y+\lambda)=-1\quad\text{for all }(\lambda,q,w,\phi)\in\Cc_{\pm}.
	\end{align}
	Suppose now that (a) fails to hold. Then there exists $(\lambda,q,w,\phi)\in\Cc_+$ such that one of the properties in \eqref{eq:NoIntersectionBed}--\eqref{eq:NoSelfIntersectionSecondOpen}, \eqref{eq:f_pos} is violated, but all of them hold along a sequence $(\lambda_n,q_n,w_n,\phi_n)\in\Cc_+$ converging to $(\lambda,q,w,\phi)$ in $\R\times X$ according to Lemma \ref{lma:LocalNodal}. Due to Lemma \ref{lma:M_prop}, we have in fact $w_n\to w$ in $C_\per^{2,\alpha}(\R)$. In particular, $w$ satisfies
	\begin{gather}
		w(x)\ge-h\quad\text{for all }x\in\R,\nonumber\\
w'(x)\le0\quad\text{for all }x\in(0,L/2),\label{eq:MonotoneCrestTroughLimit}\\
		w\in C_{0,\per,\e}^{2,\alpha}(\R)\quad\text{and}\quad w''(0)\le0,\;\quad w''(L/2)\ge0,\label{eq:MonotoneCrestTroughSecondOpenLimit}\\
		0\le x+(\Ch w)(x)\le L/2\quad\text{for all }x\in(0,L/2),\label{eq:NoSelfIntersectionLimit}\\
		1+(\Ch w')(0)\ge0\quad\text{and}\quad 1+(\Ch w')(L/2)\ge0.\label{eq:NoSelfIntersectionSecondOpenLimit}
	\end{gather}
	Let now $U$ and $V$ correspond to this $w$ (that is, $V=V[w+h]$, $U$ a harmonic conjugate of $-V$; notice that we do not claim at this point that $U+iV$ is conformal), and recall that $\SL U_x=1+\Ch w'$. First suppose that $w(x)=-h$ for some $x\in\R$. Then $w(L/2)=-h$ by \eqref{eq:MonotoneCrestTroughLimit} and evenness. Therefore, the harmonic function $V$ has a global minimum at $(L/2,0)$ and hence $V_y(L/2,0)<0$ by the Hopf boundary-point lemma. But $V_y(L/2,0)=U_x(L/2,0)\ge0$ by the Cauchy--Riemann equations and \eqref{eq:NoSelfIntersectionSecondOpenLimit}, so a contradiction arises. Thus, \eqref{eq:NoIntersectionBed} holds. 
	
	Next, by evenness we have $V_x(0,0)=0$ and therefore $V_y(0,0)\ne0$ because of $\K(w)>0$. By the Cauchy--Riemann equations it holds that $V_y(0,0)=U_x(0,0)=1+(\Ch w')(0)\ge0$. Combining these two results yields $1+(\Ch w')(0)>0$ altogether. The same reasoning (together with periodicity) at the point $(L/2,0)$ instead of $(0,0)$ yields also $1+(\Ch w')(L/2)>0$. Therefore, \eqref{eq:NoSelfIntersectionSecondOpen} holds.
	
	Let us now consider \eqref{eq:MonotoneCrestTroughLimit} and \eqref{eq:MonotoneCrestTroughSecondOpenLimit}. To this end, we consider
	\[f_n=\frac{(V_n)_x((\phi_n)_y+\psi^{\lambda_n}_y)-(V_n)_y(\phi_n)_x}{(V_n)_x^2+(V_n)_y^2},\]
	which satisfies \eqref{eq:f_pos} and
	\[\Delta f_n=-\gamma'(\phi_n+\psi^{\lambda_n})f_n|\nabla V_n|^2.\]
	Using elliptic regularity and recalling that solutions in $\Om$ are of class $C^{3,\alpha}$ by Proposition \ref{prop:3alpha_regularity}, we obtain that $f_n$ converges in $C_{\per}^{1,\beta}(\overline{\Omega_h})$ to some $f\in C_{\per}^{2,\alpha}(\overline{\Omega_h})$, which satisfies
	\begin{align}\label{eq:Delta_f}
		\Delta f=-\gamma'(\phi+\psi^\lambda)f|\nabla V|^2
	\end{align}
	and
	\[f=\frac{V_x(\phi_y+\psi^\lambda_y)-V_y\phi_x}{V_x^2+V_y^2},\]
	at least close to $\R\times\{0\}$, in fact everywhere whenever \eqref{eq:NoSelfIntersection} holds.

	Let us split the boundary of $\mathcal R\coloneqq(0,L/2)\times(-h,0)$ into four parts:
	\begin{gather*}
		\partial\mathcal R=\partial\mathcal R_l\cup\partial\mathcal R_r\cup\partial\mathcal R_t\cup\partial\mathcal R_b,\\
		\partial\mathcal R_l\coloneqq\{0\}\times[-h,0],\quad\partial\mathcal R_r\coloneqq\{L/2\}\times[-h,0],\\
		\partial\mathcal R_t\coloneqq[0,L/2]\times\{0\},\quad\partial\mathcal R_b\coloneqq[0,L/2]\times\{-h\}.
	\end{gather*}
	Notice first that $f\ge0$ on $\overline{\mathcal R}$ since $f_n$ has this property. Clearly, $f_n=0$ and thus $f=0$ on $\partial\mathcal R_l\cup\partial\mathcal R_r\cup\partial\mathcal R_b$ by evenness and periodicity or the bottom boundary condition for $V_n$ and $\phi_n$. Moreover,
	\[f=\frac{V_x(\phi_y+\psi^\lambda_y)}{\K(w)^2}=-\frac{w'\sqrt{2q+\lambda^2-2gw}}{\K(w)}\quad\text{on }\partial\mathcal R_t
	\]by \eqref{eq:OriginalBernoulliFlat} and \eqref{eq:sign_normalder_psi}; in particular, $f\not\equiv0$ as $w\not\equiv0$. Therefore, $f\ge0$ on $\partial\mathcal R_t$ in view of \eqref{eq:MonotoneCrestTroughLimit}. By \eqref{eq:Delta_f} and the strong maximum principle, it follows that $f>0$ on $\mathcal R$; in particular, \eqref{eq:f_pos} holds whenever all other properties in alternative (a) are valid.
	
	The strategy to prove \eqref{eq:MonotoneCrestTrough} and \eqref{eq:MonotoneCrestTroughSecondOpen} is now almost exactly the same as in \cite[Proof of Lemma 15]{ConsStrVarv16}: On the one hand, if $w'(x_0)=0$ for some $x_0\in(0,L/2)$, then $f$ obtains its minimum on $\overline{\mathcal R}$ in the point $(x_0,0)$. However, a computation shows that $f_y(x_0,0)=0$, yielding a contradiction in view of the Hopf boundary-point lemma. On the other hand, if $w''(x_0)=0$ for $x_0=0$ or $x_0=L/2$, one can compute that all first and second order derivatives of $f$ vanish at $(x_0,0)$, contradicting, in view of the Serrin corner-point lemma, the fact that $f$ obtains its minimum on $\overline{\mathcal R}$ in the point $(x_0,0)$. Indeed, the only slight difference in this computation compared to \cite{ConsStrVarv16} is that $f_{xx}(x_0,0)=0$ follows from $f(x_0,0)=f_{yy}(x_0,0)=0$ and, here, \eqref{eq:Delta_f}.

	Finally, let us now turn to \eqref{eq:NoSelfIntersectionLimit} and suppose that there exists $x_0\in(0,L/2)$ such that $U(x_0,0)=x_0+(\Ch w)(x_0)=0$. Then the harmonic function $U$ attains its minimum on $\overline{\mathcal R}$ in the point $(x_0,0)$. By the Hopf boundary-point lemma and the Cauchy--Riemann equations, it follows that $w'(x_0)=V_x(x_0,0)=-U_y(x_0,0)>0$. This contradicts \eqref{eq:MonotoneCrestTroughLimit}.
	
	To sum up, we have proved that, if alternative (a) fails to hold, then alternative (b) has to be valid. The proof is complete.
\end{proof}

\subsubsection*{Characterization of looping back to a trivial solution}

We now have a look at the possibility that the solution set loops back to a solution with $w\equiv0$, but no other alternative in Theorem \ref{thm:GlobalBifurcation} holds (in particular, $\Cc\subset\Om$ and alternative (a) in Lemma \ref{lma:nodal_alternatives_end} occurs). For simplicity we do not strive to handle the most general case in the following, but rather assume \eqref{eq:sup_gamma'_DirEV}, and explain only later in Remark \ref{rem:loop_back} which problems might occur in case \eqref{eq:sup_gamma'_DirEV} does not hold. By means of \eqref{eq:sup_gamma'_DirEV}, we have precisely alternative (ii) in Theorem \ref{thm:GlobalBifurcation} according to Remark \ref{rem:TrivialSolutions} and do not run into a somewhat unpleasant point of the form $(\lambda,q,0,\phi)$ with $\phi\not\equiv0$. In particular, also by \eqref{eq:sup_gamma'_DirEV}, we know that \eqref{ass:SL-spectrum} holds for any $\lambda\in\R$ and that any corresponding kernel is at most one-dimensional by Proposition \ref{prop:sol_disprel_bound_gamma'}. 

Therefore, let us now assume that there is a sequence $(\lambda_n,q_n,w_n,\phi_n)$ in $\Cc_+$ or $\Cc_-$ converging to some $(\bar\lambda,0,0,0)$. Then, $\bar\lambda$ has the same sign as $\lambda_0$, for otherwise there would have to exist a point $(\lambda,q,w,\phi)\in\Cc_+\cup\Cc_-$ with $\min_\R|\SL\phi_y+\lambda|=0$, leading to the contradiction $\Cc\cap\partial\Om\ne\emptyset$. Let us suppose that $\dim\ker F_{(q,w,\phi)}(\bar\lambda,0,0,0)=0$. Then $F_{(q,w,\phi)}(\bar\lambda,0,0,0)$ is an isomorphism by the open mapping theorem and since its index is zero (identity plus compact). The implicit function theorem implies that the only solutions in a neighborhood of $(\bar\lambda,0,0,0)$ are trivial solutions, yielding a contradiction. Thus, $\dim\ker F_{(q,w,\phi)}(\bar\lambda,0,0,0)=1$. Lemma \ref{lma:kernel} yields that $d(-(\bar k\nu)^2,\bar\lambda)=0$ for exactly one $\bar k\in\N$.

Now let us define
\[(p_n,v_n,\varphi_n)\coloneqq\frac{(q_n,w_n,\phi_n)}{\|(q_n,w_n,\phi_n)\|_X}\]
for $n\in\N$. Since, by Lemma \ref{lma:M_prop}, $\M\colon\Om\to Y$ is locally Lipschitz continuous, where
\begin{align*}
	Y&\coloneqq\R\times C_{0,\per,\e}^{2,\alpha}(\R)\times C_\per^{2,\alpha}(\overline{\Omega_h}),\\
	\|(q,w,\phi)\|_Y&\coloneqq|q|+\|w\|_{C_\per^{2,\alpha}(\R)}+\|\phi\|_{C_\per^{2,\alpha}(\overline{\Omega_h})},
\end{align*}
we find that
\begin{align*}
	\|(p_n,v_n,\varphi_n)\|_Y&=\frac{\|(q_n,w_n,\phi_n)\|_Y}{\|(q_n,w_n,\phi_n)\|_X}=\frac{\|\M(\lambda_n,q_n,w_n,\phi_n)-\M(\lambda_n,0,0,0)\|_Y}{\|(q_n,w_n,\phi_n)\|_X}\\
	&\le C
\end{align*}
for some constant $C>0$ independent of $n$. Therefore, $(p_n,v_n,\varphi_n)$ converges in $X$, after extracting a suitable subsequence, to some $(p,v,\varphi)$. Since $\|(p_n,v_n,\varphi_n)\|_X=1$, $n\in\N$, we have in particular $(p,v,\varphi)\ne(0,0,0)$. By local Lipschitz continuity of $F_{(q,w,\phi)}$ and $F(\lambda_n,q_n,w_n,\phi_n)=F(\lambda_n,0,0,0)=0$, $n\in\N$, it follows that
\begin{align*}
	&\|F_{(q,w,\phi)}(\lambda_n,0,0,0)(p_n,v_n,\varphi_n)\|_X\\
	&=\frac{\|F(\lambda_n,q_n,w_n,\phi_n)-F(\lambda_n,0,0,0)-F_{(q,w,\phi)}(\lambda_n,0,0,0)(q_n,w_n,\phi_n)\|_X}{\|(q_n,w_n,\phi_n)\|_X}\\
	&\le C\|(q_n,w_n,\phi_n)\|_X,
\end{align*}
where $C>0$ is some constant independent of $n$, and thus
\[F_{(q,w,\phi)}(\bar\lambda,0,0,0)(p,v,\varphi)=\lim_{n\to\infty}F_{(q,w,\phi)}(\lambda_n,0,0,0)(p_n,v_n,\varphi_n)=0.\]
Therefore, $(p,v,\varphi)$ is a nonzero element of $\ker F_{(q,w,\phi)}(\bar\lambda,0,0,0)$, and Lemma \ref{lma:kernel} yields that $v=c\cos(\bar k\nu\cdot)$ for some $c\in\R\setminus\{0\}$. But because of $v_n=w_n/\|(q_n,w_n,\phi_n)\|_X$, any $v_n$ has period $L$ and $v_n'\ne0$ on $(0,L/2)$ by Lemma \ref{lma:nodal_alternatives_end}. Therefore, necessarily $2\pi/(\bar k\nu)=L$, that is, $\bar k=1$. In particular, we get
\[d(-\nu^2,\lambda_0)=d(-\nu^2,\bar\lambda)=0.\]
This means that $\lambda\mapsto d(-\nu^2,\lambda)$ needs to have at least two zeros on $(0,\infty)$ if $\lambda_0>0$ (on $(-\infty,0)$ if $\lambda_0<0$). This is a purely algebraic property and to check whether it is possible only requires the computation of the trivial solutions for $\lambda>0$ if $\lambda_0>0$ (or $\lambda<0$ if $\lambda_0<0$). In fact, it can never hold if $\sup \gamma'< \pi^2/(4h^2)$ and $\lambda_0\gamma''\ge0$, and thus alternative (ii) can be left out completely in this case. Indeed, it is part of the argument leading to Proposition \ref{prop:transversality_redundant}, especially \eqref{eq:d_lambda_sign}, that for any $\lambda>0$ we have $d_\lambda(-\nu^2,\lambda)>0$ provided $d(-\nu^2,\lambda)=0$. Therefore, the map $\lambda\mapsto d(-\nu^2,\lambda)$ can clearly have at most one zero on $(0,\infty)$; similarly, the statement also holds for $(-\infty,0)$ instead.

\begin{remark}\label{rem:loop_back}
	Let us now have a look at the case when \eqref{eq:sup_gamma'_DirEV} does not hold. Then a sequence $(\lambda_n,q_n,w_n,\phi_n)$ in $\Cc_+$ or $\Cc_-$ as above can possibly also converge to some $(\bar\lambda,\bar q,0,\bar\phi)$ with $\bar\phi\not\equiv0$. Considering the corresponding $f_n$ and $\bar f$, defined in the obvious way by means of \eqref{eq:def_f}, we have $\bar f=0$ on $\partial\mathcal R$, $\Delta\bar f=-\gamma'(\bar\phi+\psi^{\bar\lambda})\bar f$ and, by Lemma \ref{lma:nodal_alternatives_end}, $\bar f=-\bar\phi_x\ge0$ on $\overline{\mathcal R}$. Therefore, $\bar\phi_x\equiv0$. Indeed, if not, then $\bar\phi_{xy}\ne0$ on $\partial\mathcal R_t$ by the Hopf boundary point lemma, contradicting the Bernoulli equation \eqref{eq:OriginalEquations_Bernoulli}, which demands here that $\bar\phi_y$ be independent of $x$ on the surface. Hence, $\bar\phi=\bar\phi(y)$ so that $\bar\phi+\psi^{\bar\lambda}=\psi^{\hat\lambda}$ with $\hat\lambda\coloneqq\bar\phi_y(0)+\bar\lambda$, and therefore $\bar q=(\hat\lambda^2-\bar\lambda^2)/2$ in view of \eqref{eq:def_q}. By doing the same computations as in Appendix \ref{appx:compder}, we see that $F_{(q,w,\phi)}(\bar\lambda,\bar q,0,\bar\phi)=F_{(q,w,\phi)}(\hat\lambda,0,0,0)$. At this point it seems natural to just mimic the previous strategy and construct a kernel element $(p,v,\varphi)$ of $F_{(q,w,\phi)}(\bar\lambda,\bar q,0,\bar\phi)$. However, for this it was important that $F_\lambda$ vanishes. Here, there is just no reason why this should be true at $(\bar\lambda,\bar q,0,\bar\phi)$: in fact, for this it would be necessary that $\A_\lambda=\A_\lambda(\bar\lambda,0,\bar\phi)\equiv0$, where $\A_\lambda$
	solves
	\[\Delta\A_\lambda=\left(\gamma'(\psi^{\bar\lambda})-\gamma'(\psi^{\hat\lambda})\right)\partial_\lambda\psi^\lambda,\]
	and the right-hand side does in general not vanish since $\bar\lambda\ne\hat\lambda$. Therefore, we restricted ourselves before to the case \eqref{eq:sup_gamma'_DirEV} and leave as an open problem what may happen else.
\end{remark}
\subsubsection*{Summary}
Let us now summarize all the results of this section so far.
\begin{theorem}\label{thm:NodalProperties}
	Assume \eqref{ass:Pruefergraph} and without loss of generality $k_0=1$. The nodal properties \eqref{eq:NoIntersectionBed}--\eqref{eq:NoSelfIntersectionSecondOpen} (together with \eqref{eq:f_pos}) on $\Cc_\pm$ (if $\mp\sgn\lambda_0=1$) can be continued as long as no configuration with a flat surface is reached and no self-intersection of the free surface exactly above a trough occurs.
	
	Moreover, in case \eqref{eq:sup_gamma'_DirEV} (which implies \eqref{ass:Pruefergraph}), in Theorem \ref{thm:GlobalBifurcation} we can
	\begin{itemize}[leftmargin=*]
		\item replace (ii) by
		\begin{enumerate}
			\item[(ii$'$)] there exists $\bar\lambda\ne0$ with $\bar\lambda\ne\lambda_0$, $(\bar\lambda,0,0,0)\in\Cc\cap(\overline{\Cc_+}\cup\overline{\Cc_-})$, $\sgn\bar\lambda=\sgn\lambda_0$, and $d(-\nu^2,\lambda_0)=d(-\nu^2,\bar\lambda)=0$ \end{enumerate}
		and leave out (ii) completely in case
		\begin{align}
		\label{eq:nodal cond}
		\sup \gamma'< \frac{\pi^2}{4h^2}\quad \text{and} \quad \lambda_0\gamma''\ge0;
		\end{align}
\item replace (v) by
		\begin{enumerate}
			\item[(v$'_\pm$)] there exists $(\lambda_\pm,q_\pm,w_\pm,\phi_\pm)\in\Cc_\pm$ such that \eqref{eq:NoIntersectionBed}, \eqref{eq:MonotoneCrestTrough}, \eqref{eq:MonotoneCrestTroughSecondOpen}, \eqref{eq:NoSelfIntersectionSecondOpen}, and \eqref{eq:SelfIntersection1} are satisfied for the point $(\lambda_\pm,q_\pm,w_\pm,\phi_\pm)$ (provided that $\mp\sgn\lambda_0=1$); in particular, self-intersection of the free surface occurs exactly above a trough;
		\end{enumerate}
		\item leave out (vi).
	\end{itemize}
Also, if $\Cc\cap\partial\Om=\emptyset$ (in particular, if (iii) and (iv) do not hold) and (ii$'$) is not satisfied, then $\Cc=\Cc_+\cup\Cc_-\cup\{(\lambda_0,0,0,0)\}$ disjointly.
	
	(In case $\mp\sgn\lambda_0=-1$, the inequalities in \eqref{eq:MonotoneCrestTrough} and \eqref{eq:MonotoneCrestTroughSecondOpen} always have to be reversed, and \eqref{eq:SelfIntersection1} has to read $0\le x+(\Ch w)(x)<L/2$ for all $x\in(0,L/2)$, $x_0+(\Ch w)(x_0)=0$ for some $x_0\in(0,L/2)$.)
\end{theorem}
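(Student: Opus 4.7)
The theorem is essentially a packaging of the already-established Lemmas \ref{lma:LocalNodal} and \ref{lma:nodal_alternatives_end} together with the loop-back analysis carried out in the text preceding the statement. For the first (continuation) claim: by definition $\Cc_\pm$ is a connected subset of $(\Cc\cap\Om)\setminus\{w\equiv 0\}$ containing the local curve points $(\lambda^{\pm s_0},q^{\pm s_0},w^{\pm s_0},\phi^{\pm s_0})$, so Lemma \ref{lma:LocalNodal} provides the initial seed at small $|s|$ and Lemma \ref{lma:nodal_alternatives_end} supplies the dichotomy: either \eqref{eq:NoIntersectionBed}--\eqref{eq:NoSelfIntersectionSecondOpen} together with \eqref{eq:f_pos} persists throughout $\Cc_+$, or a point satisfying \eqref{eq:SelfIntersection1} (self-intersection exactly above a trough) is reached. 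Since the very definition of $\Cc_\pm$ excludes flat-surface configurations, this is precisely the continuation claim.

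Next, I would handle the replacement of (ii) by (ii$'$) under \eqref{eq:sup_gamma'_DirEV}. By Remark \ref{rem:TrivialSolutions} any flat-surface solution on $\Cc$ must take the form $(\bar\lambda,0,0,0)$, and \eqref{eq:est_m'} together with Lemma \ref{lma:kernel} reduces the dimension of $\ker F_{(q,w,\phi)}(\bar\lambda,0,0,0)$ to at most one, governed by the dispersion relation. The sign constraint $\sgn\bar\lambda=\sgn\lambda_0$ follows from \eqref{eq:sign_normalder_psi}, since otherwise $|\SL\phi_y+\lambda|$ would have to vanish somewhere along $\Cc_\pm$, contradicting $\Cc_\pm\subset\Om$. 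The identification $\bar k=1$ together with the coincidence $d(-\nu^2,\lambda_0)=d(-\nu^2,\bar\lambda)=0$ is produced by the blow-up argument displayed just before the theorem: normalized differences $(p_n,v_n,\varphi_n)$ converge in $X$ to a nonzero kernel element via the ``gain of one derivative'' in Lemma \ref{lma:M_prop}, Lemma \ref{lma:kernel} forces $v=c\cos(\bar k\nu\cdot)$, and the nodal constraint $v_n'<0$ on $(0,L/2)$ (inherited from the continuation result) rules out $\bar k>1$. Under the stronger hypothesis \eqref{eq:nodal cond}, the strict-sign conclusion \eqref{eq:d_lambda_sign} from the argument for Proposition \ref{prop:transversality_redundant} makes $\lambda\mapsto d(-\nu^2,\lambda)$ strictly monotone on each half-line, so (ii$'$) is impossible and (ii) may be dropped entirely.

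Alternative (v$'_\pm$) is then exactly alternative (b) of Lemma \ref{lma:nodal_alternatives_end}, and (vi) is eliminated because \eqref{eq:NoIntersectionBed} now holds on all of $\Cc_+\cup\Cc_-$. For the disjoint decomposition under the stated no-degeneration hypotheses, I would set $\Cc'\coloneqq\Cc_+\cup\Cc_-\cup\{(\lambda_0,0,0,0)\}$ and show that it is both open and closed in the connected set $\Cc$: closedness follows since $\Cc_\pm$ are closed as connected components of $(\Cc\cap\Om)\setminus\{w\equiv 0\}$ and the only possible limit point in $\Cc$ with $w\equiv 0$ is $(\lambda_0,0,0,0)$ by the failure of (ii$'$); openness at $(\lambda_0,0,0,0)$ uses the local-uniqueness clause of Theorem \ref{thm:LocalBifurcation}, pinning a neighborhood of the trivial point inside $\Cc'$. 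Disjointness of $\Cc_+$ and $\Cc_-$ is forced by the opposite signs of $w'$ on $(0,L/2)$. The main obstacle I expect is the blow-up step in the second paragraph: one must ensure that the $C^{1,\alpha}$-convergence of $w_n$ in $X$ upgrades to $C^{2,\alpha}$-convergence so that the nodal sign $v_n'<0$ survives the normalization and selects the correct Fourier mode $\bar k=1$; this is precisely where the compactness and regularity gain supplied by Lemma \ref{lma:M_prop} is indispensable.
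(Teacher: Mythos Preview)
Your proposal is correct and matches the paper's approach: the theorem is explicitly presented as a summary (``Let us now summarize all the results of this section so far''), and you have correctly identified all the ingredients---Lemmas \ref{lma:LocalNodal} and \ref{lma:nodal_alternatives_end} for the continuation and for (v$'_\pm$)/(vi), the loop-back analysis (normalized blow-up plus Lemma \ref{lma:kernel}) for (ii$'$), and \eqref{eq:d_lambda_sign} for removing (ii) under \eqref{eq:nodal cond}. One small over-worry: in the blow-up step you flag the need to upgrade to $C^{2,\alpha}$ convergence, but $C^{1,\alpha}$ convergence of $v_n$ (already provided by $X$) is enough, since $v_n'\neq 0$ on $(0,L/2)$ passes to $v'$ not changing sign there, which already forces $\bar k=1$ for $v=c\cos(\bar k\nu\cdot)$.
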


\subsection{On downstream waves}\label{sec:downstream}
Let us now have a look at another special case, namely, downstream waves. This case appears provided $\lambda_0>0$ and $\gamma\ge0$ (or $\lambda_0<0$ and $\gamma\le0$). In fact, here we can say more about the unboundedness alternative in Theorem \ref{thm:GlobalBifurcation}.
\begin{proposition}In Theorem \ref{thm:GlobalBifurcation} the alternative (i)(b) can be left out completely provided $\lambda_0>0$ and $\gamma\ge0$ (or $\lambda_0<0$ and $\gamma\le0$) [in particular, in the irrotational case $\gamma\equiv0$ for any $\lambda_0\ne0$].
\end{proposition}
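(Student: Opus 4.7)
The plan is to use the downstream sign structure to derive an a priori Lipschitz bound on $w$ along $\Cc$, which clearly rules out blow-up of $\|w\|_{C_\per^{0,\delta}(\R)}$ for any $\delta \in (5/6, 1]$. I treat the case $\lambda_0 > 0$ and $\gamma \ge 0$; the other case is symmetric.

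The first step is to exploit a structural advantage of the downstream case: continuity along $\Cc$ together with the nondegeneracy $|\SL\partial_y\A + \lambda| > 0$ built into $\Om$ ensures $\lambda > 0$ and $\SL\phi_y + \lambda > 0$ throughout $\Cc$. Since $\gamma \ge 0$, the stream function $\psi = (\phi + \psi^\lambda)\circ H^{-1}$ is superharmonic in $\Omega$, so the maximum principle applied with boundary data $0$ on $S$ and $-m(\lambda) < 0$ on the bed yields $-m(\lambda) \le \psi \le 0$ in $\Omega$. Pulling back to $\Omega_h$ one finds that $\|\phi\|_{L^\infty(\Omega_h)}$ is bounded whenever $|\lambda|$ is bounded (the setting in which alternative (i)(a) is excluded). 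Such an $L^\infty$-bound on $\phi$ is not available in general and is the structural heart of the argument.

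Next, Bernoulli's identity $|\nabla\psi|^2 = 2q + \lambda^2 - 2gw$ on $S$, combined with the standing assumption $q + \lambda^2/2 - gw \ge \varepsilon$ (negation of alternative (iii)), gives $\max_\R w \le (q + \lambda^2/2 - \varepsilon)/g$. For the lower bound on the physical depth $\eta = w + h$, I would invoke the unidirectional flow $\psi_Y > 0$ — established earlier in this section — and the mass flux identity $m(\lambda) = \int_0^{\eta(X)} \psi_Y(X, Y)\,dY$, which yields $\eta \ge m(\lambda)/\sup_\Omega \psi_Y$. Once $\|w\|_{L^\infty(\R)}$ is controlled, $W^{2,p}$-regularity for the Poisson problem \eqref{eq:def_A} together with Sobolev embedding gives $\|\SL\phi_y\|_\infty \le C$, hence $R \le C$ on $y = 0$.

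To close, I observe that $G = H' = V_y + iV_x$ is analytic and non-vanishing in $\overline{\Omega_h}$, with $\SL|G| = R \le C$ on the top and, by the Poisson representation \eqref{eq:V_explicit} of $V$ near the bed, $|G| \le C$ on the bottom as well. The maximum principle for the harmonic function $\log|G|$ then propagates this $L^\infty$-bound to all of $\overline{\Omega_h}$. Since $|w'(x)| = |V_x(x, 0)| \le |G(x, 0)| \le C$, $w$ is uniformly Lipschitz along $\Cc$, so $\|w\|_{C_\per^{0,\delta}(\R)}$ stays bounded for every $\delta \in (0, 1]$, contradicting alternative (i)(b). The main obstacle will be the interior bound on $\psi_Y$ needed for the mass-flux step: without a sign condition on $\gamma'$ one cannot apply a maximum principle to $\psi_Y$ directly. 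I expect to obtain $\sup_\Omega \psi_Y \le C$ by using the non-overhang property to realize $\Omega$ as a graph domain of bounded height, combined with the Bernoulli bound $|\nabla\psi| \le \sqrt{2Q}$ on $S$ and the Dirichlet datum on the bed, via local elliptic regularity for the semilinear equation $\Delta\psi + \gamma(\psi) = 0$.
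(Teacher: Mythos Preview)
Your proposal has a genuine circularity at the $W^{2,p}$ step. You want to bound $\|\SL\phi_y\|_\infty$ via elliptic regularity for \eqref{eq:def_A}, but the right-hand side of that equation contains the factor $|\nabla V|^2$, and $\|\nabla V\|_\infty$ is controlled only through $\|w\|_{C^{1,\alpha}}$ (or a comparable norm) --- precisely the quantity you are trying to bound. The $L^\infty$-bound on $\phi$ that you obtain from superharmonicity of $\psi$ does not touch this factor. So the loop $\|w'\|_\infty \to \|\nabla V\|_\infty \to \|\SL\phi_y\|_\infty \to \|w'\|_\infty$ never closes. Your later steps (the mass-flux lower bound on $\eta$, the bound on $\sup_\Omega\psi_Y$, the $\log|G|$ maximum principle) are all downstream of this gap and, as you yourself flag, introduce further difficulties; moreover, the unidirectionality and non-overhang results you invoke are established in the paper \emph{after} the present proposition and only on $\Cc_0$, not on all of $\Cc$.

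The paper avoids this circularity by a one-line trick in the flattened domain. Set $\zeta(x,y)=\phi(x,y)+\psi^\lambda(y)-y\,m(\lambda)/h$ on $\overline{\Omega_h}$. Then $\Delta\zeta=-\gamma(\phi+\psi^\lambda)\,|\nabla V|^2\le0$ because $\gamma\ge0$, \emph{regardless of the size of $|\nabla V|$}, and $\zeta=0$ on both $y=0$ and $y=-h$. The weak maximum principle gives $\zeta\ge0$, hence $\SL\zeta_y=\SL\phi_y+\lambda-m(\lambda)/h\le0$. Together with $\SL\phi_y+\lambda>0$ (your first paragraph, which is fine), this yields $0<\SL\phi_y+\lambda\le m(\lambda)/h$. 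Now Bernoulli \eqref{eq:OriginalBernoulliFlat} gives directly
\[
|w'|\le\K(w)=R=\frac{\SL\phi_y+\lambda}{\sqrt{2q+\lambda^2-2gw}}\le\frac{m(\lambda)/h}{\sqrt{2\varepsilon}},
\]
bounded once $\lambda$ is bounded and alternative (iii) fails. Since $w$ has zero mean, this bounds $\|w\|_{C^{0,1}}$ and hence every $C^{0,\delta}$-norm. No lower bound on $w+h$, no control of $\sup\psi_Y$, and no $\log|G|$ argument are needed.
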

\begin{proof}
	Let us assume that $\lambda_0>0$ and $\gamma\ge0$; in the other case $\lambda_0<0$ and $\gamma\le0$ one can proceed completely analogously. Let $(\lambda,q,w,\phi)\in\Cc\cap\Om$, and first notice that $\SL|\phi_y+\lambda|>0$ and therefore we may assume that $\SL\phi_y+\lambda>0$ because of $\lambda_0>0$; indeed, in case $\inf_{(\lambda,q,w,\phi)\in\Cc}\min_\R|\SL\phi_y+\lambda|=0$, necessarily alternative (iii) or (iv) holds true recalling the discussion after \eqref{eq:BoundaryAlternative} in the proof of Theorem \ref{thm:GlobalBifurcation}. Let us now consider the function
	\[\zeta(x,y)\coloneqq\phi(x,y)+\psi^\lambda(y)-ym(\lambda)/h,\quad(x,y)\in\overline{\Omega_h},\]
	which solves
	\begin{align*}
		\Delta\zeta&=-\gamma(\phi+\psi^\lambda)|\nabla V|^2\le0&\text{in }\Omega_h,\\
		\zeta&=0&\text{on }y=0,\\
		\zeta&=0&\text{on }y=-h.
	\end{align*}
	By the weak maximum principle, $\zeta\ge0$ and hence in particular
	\[0\ge\SL\zeta_y=\SL\phi_y+\lambda-\frac{m(\lambda)}{h}.\]
	Therefore, we have
	\[0<\SL\phi_y+\lambda\le\frac{m(\lambda)}{h}\]
	and thus
	\[|\SL\phi_y+\lambda|\le\frac{m(\lambda)}{h}.\]
	Looking at the Bernoulli equation \eqref{eq:OriginalBernoulliFlat} we conclude that
	\[|w'|\le\K(w)\le\frac{m(\lambda)}{2h\sqrt{q+\lambda^2/2-gw}}.\]
	Thus, if $\lambda$ remains bounded and $q+\lambda^2/2-gw$ stays away from zero, it follows that the $C_b^1$-norm (and therefore all $C^{0,\delta}$-norms, $\delta\in(0,1]$) of $w$ remains bounded since $w$ has zero average. This proves the claim.
\end{proof}
Next, our goal is to show that, in the situation of a downstream wave, the flow is unidirectional and the waves cannot overhang. More precisely, while assuming that the conditions for local bifurcation are met, \enquote{downstream waves} correspond in the case $\lambda_0>0$ and $\gamma\ge0$ to points on $\Cc_0$, which is defined to be the connected component of $\{(\lambda,q,w,\phi)\in\Cc\cap\Om:\eqref{eq:intersection_with_bed}\text{ holds}\}$ to which $(\lambda_0,0,0,0)$ belongs. Notice that we have to assume that the free surface does not intersect the flat bed, but we can get rid of this assumption provided additionally \eqref{eq:sup_gamma'_DirEV} holds in view of the previous section. Very similar things can also be done in case $\lambda_0<0$ and $\gamma\le0$, but for simplicity we only consider the first case. 

We follow the ideas of \cite{ConsStrVarv21}, where similar results in the case of constant vorticity were obtained. Moreover, we like to mention \cite{KozlovLokharu20,StraussWheeler16} at this point, where bounds for the slope of the surface for unidirectional flows are established.

Let us now consider the properties
\begin{gather}
	\frac{V_x\phi_x+V_y(\phi_y+\psi^\lambda_y)}{V_x^2+V_y^2}>0\quad\text{on }\overline{\Omega_h},\label{eq:unidirectional}\\
	1+\Ch w'>0\quad\text{on }\R,\label{eq:nooverhang}
\end{gather}
for $(\lambda,q,w,\phi)\in\Cc_0$. Notice that \eqref{eq:nooverhang} implies that the corresponding free surface does not overhang and thus in particular $|\nabla V|^2\ne0$ on $\overline{\Omega_h}$ by Lemma \ref{lma:ConformalMapping}(iv). Therefore, the left-hand side of \eqref{eq:unidirectional} is well-defined and equals the horizontal component of the velocity in the physical domain, which in turn means that \eqref{eq:unidirectional} yields unidirectionality of the corresponding flow. Since $\phi=\A(\lambda,w,\phi)$ along $\Cc_0$, we notice furthermore that \eqref{eq:unidirectional} and \eqref{eq:nooverhang} define open sets in $\R\times X$ and are obviously satisfied for $(\lambda,q,w,\phi)=(\lambda_0,0,0,0)$ in view of $\psi^{\lambda_0}_y(0)=\lambda_0>0$ and $\psi^{\lambda_0}_{yy}=-\gamma(\psi^{\lambda_0})\le0$. Let us now suppose on the contrary that \eqref{eq:unidirectional}, \eqref{eq:nooverhang} are not satisfied for all points on $\Cc_0$. Then there exists $(\lambda,q,w,\phi)\in\Cc_0$ such that
\begin{gather*}
	\frac{V_x\phi_x+V_y(\phi_y+\psi^\lambda_y)}{V_x^2+V_y^2}\ge0\quad\text{on }\overline{\Omega_h},\\
	1+\Ch w'\ge0\quad\text{on }\R,
\end{gather*}
but \eqref{eq:unidirectional} or \eqref{eq:nooverhang} is violated. Because of $\K(w)\ne 0$ on $\R$, still in this case $x\mapsto(x+(\Ch w)(x),w(x)+h)$ is injective on $\R$ and Lemma \ref{lma:ConformalMapping}(iv) yields that $|\nabla V|^2\ne0$ on $\overline{\Omega_h}$, that $H=U+iV\colon\Omega_h\to\Omega_w$ is conformal, and that the free surface is of class $C^{1,\alpha}$; here and in the following, we frequently make use of $(\lambda,q,w,\phi)\in\Cc_0\subset\Om$. Moreover, $w\in C^{3,\alpha}(\R)$ and $\phi,V\in C^{3,\alpha}(\overline{\Omega_h})$ by Proposition \ref{prop:3alpha_regularity}, and thus $\psi=(\phi+\psi^\lambda)\circ H^{-1}\in C^{3,\alpha}(\overline{\Omega_w})$ due to $|\nabla V|^2\ne0$ on $\overline{\Omega_h}$ (so the surface is in fact of class $C^{3,\alpha}$). Then, \[\psi_Y=\frac{V_x\phi_x+V_y(\phi_y+\psi^\lambda_y)}{V_x^2+V_y^2}\circ H^{-1}\] satisfies
\[\Delta\psi_Y+\gamma'(\psi)\psi_Y=0\quad\text{in }\Omega_w,\qquad\psi_Y\ge0\quad\text{on }\overline{\Omega_w}.\]
Let us now suppose that \eqref{eq:nooverhang} is violated. Then clearly $\psi_Y$ vanishes somewhere at the surface in view of $\SL\phi_x=0$ and $\SL V_y=1+\Ch w'$. In particular, \eqref{eq:unidirectional} is violated as well. Therefore, it suffices to consider the case that \eqref{eq:unidirectional} is violated, that is, $\min_{\overline{\Omega_w}}\psi_Y=0$. By the strong maximum principle it follows that $\psi_Y$ admits its minimum, which is $0$, on the surface or on the bottom and that $\psi_Y>0$ in $\Omega_w$, noticing that $\psi_Y\not\equiv0$. Since $\psi$ is constant on the bottom, we have $\psi_{YY}=-\psi_{XX}-\gamma(\psi)\le0$ there. Thus, $\psi_Y$ cannot admit its minimum on the bottom in view of the Hopf boundary-point lemma. Therefore, $\psi_Y$ vanishes at some $(U(x_0,0),V(x_0,0))$ on the surface, which implies that $\SL V_y=1+\Ch w'$ vanishes at $x_0$ in view of $\SL\phi_x=0$ and $\SL(\phi_y+\psi^\lambda_y)\ne0$.

Moreover, the above yields that $|\nabla\psi|^2\ne0$ on $\overline{\Omega_w}$. Based on an idea by Spielvogel \cite{Spielvogel70}, we introduce the function
\[P\coloneqq\frac12|\nabla\psi|^2+g(Y-h)-q-\frac{\lambda^2}{2}+\int_0^\psi\gamma(s)\,ds,\]
which satisfies
\[\Delta P-\frac{2(P_X-\gamma(\psi)\psi_X)}{|\nabla\psi|^2}P_X-\frac{2(P_Y-2g-\gamma(\psi)\psi_Y)}{|\nabla\psi|^2}P_Y=\frac{g(g+\gamma(\psi)\psi_Y)}{|\nabla\psi|^2}\ge0\]
in $\Omega_w$, $P_Y=g>0$ on the bottom, and $P=0$ on the surface. By the strong maximum principle it follows that $P<0$ in $\Omega_w$ and on the bottom, and that the normal derivative of $P$ at the surface is positive. The argument that leads to a contradiction is now exactly the same as in \cite[Proof of Theorem 3]{ConsStrVarv21}.

Therefore, to conclude, the following is proved:
\begin{theorem}
	Suppose that the assumptions of Theorem \ref{thm:LocalBifurcation} are satisfied, and that $\lambda_0>0$ and $\gamma\ge0$ (or $\lambda_0<0$ and $\gamma\le0$). Then, for each $(\lambda,q,w,\phi)\in\Cc_0$,
	\begin{enumerate}[label=(\alph*)]
		\item the corresponding flow is unidirectional, that is, $\psi_Y>0$ (or $\psi_Y<0$) in the closure of the physical domain,
		\item the corresponding surfaces do not overhang; more precisely, $1+\Ch w'>0$ on $\R$.
	\end{enumerate}
\end{theorem}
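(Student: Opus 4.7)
The strategy is a connectedness argument along $\Cc_0$, reducing matters to a configuration where standard elliptic maximum principles in the physical domain furnish a contradiction. Both properties (a) and (b) are open conditions on $\R\times X$, using continuity of $\A$ and of the map $w\mapsto V[w+h]$, and at $(\lambda_0,0,0,0)$ both are satisfied: by the sign assumption, $\psi^{\lambda_0}_{yy}=-\gamma(\psi^{\lambda_0})\le0$ on $[-h,0]$ gives $\psi^{\lambda_0}_y\ge\lambda_0>0$, while $w\equiv0$ gives $1+\Ch w'\equiv1$. By connectedness of $\Cc_0$, if (a) or (b) fails somewhere, then there must exist a first boundary point $(\lambda,q,w,\phi)\in\Cc_0$ at which the closed versions $\psi_Y\ge0$ on $\overline{\Omega_w}$ and $1+\Ch w'\ge0$ on $\R$ still hold but at least one of the strict inequalities fails somewhere.

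At such a boundary point, since $\Cc_0\subset\Om$ forces $\K(w)>0$, Lemma~\ref{lma:ConformalMapping}(iv) gives $|\nabla V|\ne0$ on $\overline{\Omega_h}$, and the closed version of (b) combined with $\K(w)>0$ ensures that $H$ is a conformal homeomorphism onto $\Omega_w$. Proposition~\ref{prop:3alpha_regularity} then produces $\psi=(\phi+\psi^\lambda)\circ H^{-1}\in C^{3,\alpha}(\overline{\Omega_w})$. I would first analyse $\psi_Y$, which solves
\begin{equation*}
\Delta\psi_Y+\gamma'(\psi)\psi_Y=0\quad\text{in }\Omega_w,\qquad \psi_Y\ge0\text{ on }\overline{\Omega_w},\qquad \psi_Y\not\equiv0.
\end{equation*}
The strong maximum principle forces $\psi_Y>0$ in $\Omega_w$, and the Hopf boundary-point lemma rules out a zero of $\psi_Y$ on the bed (where $\psi$ is constant, so $\psi_{YY}=-\gamma(\psi)\le0$). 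Moreover, if (b) fails at some $x_0\in\R$, then $U_x(x_0,0)=1+(\Ch w')(x_0)=0$, so the tangent to $S_w$ at the corresponding physical point is purely vertical; differentiating the surface condition $\psi\equiv0$ tangentially then yields $\psi_Y=0$ at that point, so (a) must also be violated. Hence it suffices to handle the case that $\psi_Y$ vanishes at some point of $S_w$.

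To obtain a contradiction in this remaining case, I would follow Spielvogel's classical idea and introduce
\begin{equation*}
P=\tfrac{1}{2}|\nabla\psi|^2+g(Y-h)-q-\tfrac{\lambda^2}{2}+\int_0^\psi\gamma(s)\,ds,
\end{equation*}
which vanishes on $S_w$ by Bernoulli's equation. A direct computation using $\Delta\psi=-\gamma(\psi)$ yields a homogeneous linear second-order elliptic equation for $P$ with coefficients involving $\nabla P/|\nabla\psi|^2$ and right-hand side $g(g+\gamma(\psi)\psi_Y)/|\nabla\psi|^2\ge0$ in $\Omega_w$; on the bed $P_Y=g>0$. The strong maximum principle combined with Hopf then forces $P<0$ in $\Omega_w$ and on the bed, and strictly positive outward normal derivative of $P$ at every point of $S_w$. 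This is incompatible with $\psi_Y=0$ at an interior point of $S_w$, essentially because at such a point $|\nabla\psi|$ (and hence the normal component of $\nabla P$, which carries a factor of $|\nabla\psi|$) must vanish as well, matching the argument in the constant-vorticity case.

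The main obstacle I expect is justifying that the Spielvogel PDE has nonsingular coefficients, which requires $|\nabla\psi|\ne0$ throughout the interior of $\Omega_w$; this is precisely what $\psi_Y>0$ in $\Omega_w$ delivers, since $|\nabla\psi|^2\ge\psi_Y^2$. A secondary point is ensuring that the various derivatives of $\psi$ up to and including the free surface are legitimate; this is provided by the $C^{3,\alpha}$ regularity of Proposition~\ref{prop:3alpha_regularity} together with the nondegeneracy of $H$ forced by $\Cc_0\subset\Om$. Once these two items are secured, the remainder of the argument is a direct transcription of the classical Constantin--Strauss--V\u{a}rv\u{a}ruc\u{a} strategy, the only novelty being the presence of the nonconstant term $\gamma'(\psi)\psi_Y$ in the $\psi_Y$ equation and of the inhomogeneity $\gamma(\psi)\psi_Y$ in the $P$ equation, both of which carry the right sign.
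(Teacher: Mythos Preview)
Your approach mirrors the paper's almost exactly: the open/closed connectedness argument on $\Cc_0$, the reduction to a boundary configuration where $\psi_Y\ge0$ with equality somewhere on $S_w$, the strong maximum principle and Hopf lemma for $\psi_Y$ (ruling out a zero on the bed via $\psi_{YY}=-\gamma(\psi)\le0$ there), the observation that failure of (b) forces failure of (a), and the Spielvogel function $P$ with its elliptic inequality, $P=0$ on $S_w$, and $P_Y=g>0$ on the bed. The paper likewise defers the final contradiction to \cite{ConsStrVarv21}.

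One correction to your sketch of that final step: your claim that $|\nabla\psi|$ must vanish at the surface point where $\psi_Y=0$ is false. Since $(\lambda,q,w,\phi)\in\Om$, Bernoulli's equation \eqref{eq:OriginalEquations_Bernoulli} gives $|\nabla\psi|^2=2(q+\lambda^2/2-gw)>0$ everywhere on $S_w$; equivalently, $\psi_X\ne0$ at that point (this is precisely why the paper records $|\nabla\psi|^2\ne0$ on the full closure $\overline{\Omega_w}$ before introducing $P$). The actual contradiction in \cite{ConsStrVarv21} is more delicate: it combines the strict positivity of the outward normal derivative of $P$ on $S_w$ (from Hopf) with an explicit computation of $\nabla P$ at the vertical-tangent point. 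Your identification of the ingredients is correct, but the mechanism is not degeneration of $|\nabla\psi|$.
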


\appendix

\section{Local bifurcation}\label{appx:LocalBifurcation}
Throughout Appendix \ref{appx:LocalBifurcation} we assume that the vorticity function, additionally to being globally Lipschitz, satisfies
\[\gamma\in C_{\text{loc}}^{2,1}(\R).\]
\subsection{Application of Crandall--Rabinowitz}\label{appx:LocalBifurcation:CR}
The goal of this section is to apply the following local bifurcation theorem of Crandall--Rabinowitz \cite[Theorem I.5.1]{Kielhoefer} in the framework of our new reformulation and consequently prove Theorem \ref{thm:LocalBifurcation}.
\begin{theorem}\label{thm:CrandallRabinowitz}
	Let $X$ be a Banach space, $\mathcal U\subset\R\times X$ open, and $F\colon \mathcal U\to X$ such that $(\lambda,0)\in \mathcal U$ and $F(\lambda,0)=0$ for any $\lambda\in\R\setminus\{0\}$. Assume that there exists $\lambda_0\in\R\setminus\{0\}$ such that $F$ is of class $C^2$ in an open neighborhood of $(\lambda_0,0)$, and suppose that $F_x(\lambda_0,0)$ is a Fredholm operator with index zero and one-dimensional kernel spanned by $x_0\in X$, and that the transversality condition $F_{\lambda x}(\lambda_0,0)x_0\notin\im F_x(\lambda_0,0)$ holds. Then there exists $\varepsilon>0$ and a $C^1$-curve $(-\varepsilon,\varepsilon)\ni s\mapsto(\lambda^s,x^s)$ with $(\lambda^0,x^0)=(\lambda_0,0)$ and $x^s\neq 0$ for $s\neq 0$, and $F(\lambda^s,x^s)=0$. Moreover, all solutions of $F(\lambda,x)=0$ in a neighborhood of $(\lambda_0,0)$ are on this curve or are trivial. Furthermore, the curve admits the asymptotic expansion $x^s=sx_0+o(s)$.
\end{theorem}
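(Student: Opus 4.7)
The natural approach is Lyapunov--Schmidt reduction. Since $F_x(\lambda_0, 0)$ is Fredholm of index zero with one-dimensional kernel spanned by $x_0$, its range has codimension one, so I can fix closed splittings $X = \langle x_0 \rangle \oplus Y$ and $X = Z \oplus \im F_x(\lambda_0,0)$ with $\dim Z = 1$, together with the associated continuous projections $Q \colon X \to \im F_x(\lambda_0,0)$ and $P = I - Q \colon X \to Z$. Writing $x = s x_0 + y$ with $(s,y) \in \R \times Y$, the equation $F(\lambda, x) = 0$ decouples into an ``auxiliary'' equation $QF(\lambda, sx_0 + y) = 0$ and a ``bifurcation'' equation $PF(\lambda, sx_0 + y) = 0$.

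First I would solve the auxiliary equation for $y$. Its derivative in $y$ at $(\lambda_0, 0, 0)$ is $QF_x(\lambda_0, 0)|_Y$, which maps $Y$ bijectively onto $\im F_x(\lambda_0,0)$ by construction. The implicit function theorem (at $C^2$ regularity) then produces a $C^2$ map $y = y(\lambda, s)$ near $(\lambda_0, 0)$ with $y(\lambda_0, 0) = 0$. Because $F(\lambda, 0) \equiv 0$, local uniqueness forces $y(\lambda, 0) \equiv 0$ and hence $y_\lambda(\lambda, 0) \equiv 0$; differentiating the auxiliary equation once in $s$ at $s = 0$ and using $x_0 \in \ker F_x(\lambda_0, 0)$ also gives $y_s(\lambda_0, 0) = 0$.

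The residual bifurcation equation $\phi(\lambda, s) := PF(\lambda, sx_0 + y(\lambda, s)) = 0$ is scalar in two scalar variables, since $\dim Z = 1$. The key point is that $\phi(\lambda, 0) \equiv 0$, so I would factor out $s$ by setting
\[
  \tilde\phi(\lambda, s) := \int_0^1 \phi_s(\lambda, ts)\, dt,
\]
so that $\phi(\lambda, s) = s\, \tilde\phi(\lambda, s)$ and nontrivial solutions correspond exactly to zeros of $\tilde\phi$. Although $\phi$ is $C^2$, this representation of $\tilde\phi$ as an integral of $\phi_s$ only gives $\tilde\phi \in C^1$; this regularity loss is the main technical subtlety, and is precisely why $F$ must be assumed $C^2$ even though the output curve is only $C^1$. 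A short computation gives $\tilde\phi(\lambda_0, 0) = PF_x(\lambda_0, 0) x_0 = 0$ and
\[
  \tilde\phi_\lambda(\lambda_0, 0) = PF_{\lambda x}(\lambda_0, 0) x_0 + PF_x(\lambda_0, 0)\, y_{s\lambda}(\lambda_0, 0) = PF_{\lambda x}(\lambda_0, 0) x_0,
\]
the second term dropping because its argument lies in $\im F_x(\lambda_0, 0)$. The transversality hypothesis is precisely that this remaining element of $Z$ is nonzero.

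Applying the implicit function theorem a second time, now to the $C^1$ scalar equation $\tilde\phi(\lambda, s) = 0$ at $(\lambda_0, 0)$, I obtain a $C^1$ function $s \mapsto \lambda^s$ on some $(-\varepsilon, \varepsilon)$ with $\lambda^0 = \lambda_0$ and $\tilde\phi(\lambda^s, s) = 0$. Setting $x^s := s x_0 + y(\lambda^s, s)$ yields the claimed curve, and the asymptotic expansion $x^s = s x_0 + o(s)$ follows from $y(\lambda_0, 0) = y_s(\lambda_0, 0) = y_\lambda(\lambda_0, 0) = 0$ together with continuity of $s \mapsto \lambda^s$. Local uniqueness is built into the construction: any solution of $F(\lambda, x) = 0$ near $(\lambda_0, 0)$ admits a unique decomposition $x = s x_0 + y(\lambda, s)$ by the first application of the implicit function theorem, and then either $s = 0$ (a trivial solution) or $\tilde\phi(\lambda, s) = 0$, which by the second application of the implicit function theorem forces $(\lambda, s)$ to lie on the constructed curve.
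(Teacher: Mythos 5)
The paper does not prove this statement itself but quotes it from Kielh\"ofer \cite[Theorem I.5.1]{Kielhoefer}, and your argument — Lyapunov--Schmidt reduction, solving the auxiliary equation by the implicit function theorem, factoring $s$ out of the scalar bifurcation equation via $\tilde\phi(\lambda,s)=\int_0^1\phi_s(\lambda,ts)\,dt$ (with the attendant drop from $C^2$ to $C^1$), and a second application of the implicit function theorem using transversality — is exactly the standard proof given in that reference. Your computations of $y_s(\lambda_0,0)=0$, $\tilde\phi_\lambda(\lambda_0,0)=PF_{\lambda x}(\lambda_0,0)x_0\ne0$, the expansion $x^s=sx_0+o(s)$, and the local uniqueness are all correct, so the proposal is sound and follows essentially the same route as the cited proof.
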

Motivated by \eqref{eq:trvial_in_O}, here and also when stating Theorems \ref{thm:Rabinowitz} and \ref{thm:analytic_globbi} below, we restrict ourselves to trivial solutions $(\lambda,0)$ with $\lambda\ne0$ and allow the case $(0,0)\in\partial \mathcal U$, but clearly these theorems remains true with this slight modification compared to what is stated in the respective references.

In order to apply this theorem, we have to study the linearized operator and also the transversality condition. This will be the content of the next subsections.
\subsubsection{Computing derivatives}\label{appx:compder}
First, it is important to calculate the partial derivative $F_{(q,w,\phi)}$ and, in particular, its value at a trivial solution. Let us introduce the abbreviation
\[B=B(\lambda,w,\phi)=|\SL\partial_y\A(\lambda,w,\phi)+\lambda|.\] With this we have
\[R=\frac{B}{\sqrt{2q+\lambda^2-2gw}}.\]
We first evaluate these quantities at a trivial solution $(\lambda,0,0,0)$, $\lambda\ne0$:
\begin{gather*}
	V=y+h,\quad\nabla V=\begin{pmatrix}0\\1\end{pmatrix},\quad\K=1,\quad\A=0,\quad B=|\lambda|,\quad R=1.
\end{gather*}
Let now $(\delta q,\delta w,\delta\phi)$ be a direction. First consider $V_w(0)\delta w$, the partial derivative of $V$ with respect to $w$ evaluated at $w\equiv0$ and applied to the direction $\delta w$, and abbreviate $V_w=V_w(0)\delta w$; we will also use this abbreviation similarly for other expressions and derivatives when there is no possibility of confusion. Since we will only need such partial derivatives evaluated at trivial solutions, we only state the following formulas for the case that the point at which we evaluate them satisfies $q=0$, $w\equiv0$, and $\phi\equiv0$. Now $V_w$ is the unique solution of
\begin{align*}
	\Delta V_w&=0&\text{in }\Omega_h,\\
	V_w&=\delta w&\text{on }y=0,\\
	V_w&=0&\text{on }y=-h.
\end{align*}
Next, $\A_w$ and $\A_\phi$ are the unique solutions of
\begin{align*}
\Delta\A_w&=-2\gamma(\psi^\lambda)\partial_yV_w&\text{in }\Omega_h,\\
		\A_w&=0&\text{on }y=0\text{ and }y=-h,
\end{align*}
	and
	\begin{align*}
\Delta\A_\phi&=-\gamma'(\psi^\lambda)\delta\phi&\text{in }\Omega_h,\\
			\A_\phi&=0&\text{on }y=0\text{ and }y=-h.
\end{align*}
Moreover, it holds that
\begin{gather*}
	\K_w=\Ch\delta w',\quad B_w=\frac{\lambda}{|\lambda|}\SL\partial_y\A_w,\quad B_\phi=\frac{\lambda}{|\lambda|}\SL\partial_y\A_\phi,\quad R_q=-\frac{\delta q}{\lambda^2},\\
	R_w=\frac{B_w}{|\lambda|}+\frac{g\delta w}{\lambda^2},\quad R_\phi=\frac{B_\phi}{|\lambda|},\quad F_q=\left(-\langle R_q\rangle,-\partial_x^{-1}(\Ch)^{-1}\Pro R_q,0\right),\\
	F_w=\left(-\langle R_w\rangle,\delta w-\partial_x^{-1}(\Ch)^{-1}\Pro R_w,-\A_w\right),\\
	F_\phi=\left(-\langle R_\phi\rangle,-\partial_x^{-1}(\Ch)^{-1}\Pro R_\phi,\delta\phi-\A_\phi\right).
\end{gather*}
Putting everything together, we conclude
\begin{subequations}\label{eq:Fder_trivial}
	\begin{align}
		F_q&=\left(\lambda^{-2}\delta q,0,0\right),\\
		F_w&=\left(-\lambda^{-1}\langle\SL\partial_y\A_w\rangle,\delta w-\lambda^{-1}\partial_x^{-1}(\Ch)^{-1}(\Pro\SL\partial_y\A_w+g\lambda^{-1}\delta w),-\A_w\right),\\
		F_\phi&=\left(-\lambda^{-1}\langle\SL\partial_y\A_\phi\rangle,-\lambda^{-1}\partial_x^{-1}(\Ch)^{-1}\Pro\SL\partial_y\A_\phi,\delta\phi-\A_\phi\right).
	\end{align}
\end{subequations}

Let us note that, since $\M$ is compact (on each $\Om_\varepsilon$) due to Lemma \ref{lma:M_prop}, also its derivative evaluated at a fixed point is compact. Thus, at each point, $F_{(q,w,\phi)}=\mathrm{Id}-\M_{(q,w,\phi)}$ is a compact perturbation of the identity and hence a Fredholm operator with index zero.
			
\subsubsection{The good unknown}
Before we characterize the kernel of  $F_{(q,w,\phi)}$ and the transversality condition, we first introduce an isomorphism, which facilitates the computations later and is sometimes called $\T$-isomorphism in the literature (for example, in \cite{EhrnEschWahl11,Varholm20}). The discovery of the importance of such a new variable (here $\theta$) goes back to Alinhac \cite{Alinhac89}, who called it the \enquote{good unknown} in a very general context, and Lannes \cite{Lannes05}, who introduced it in the context of water wave equations.
			
In the following, $V[v]$ denotes the unique ($L$-periodic) solution of the boundary value problem \eqref{eq:BVP_for_V} with Dirichlet data $v$ instead of $w+h$ on $y=0$.
			
\begin{lemma}
Let
\[Y\coloneqq\left\{\theta\in C_{\per,\e}^{0,\alpha}(\overline{\Omega_h})\cap H_\per^1(\Omega_h):\SL\theta\in C_{0,\per,\e}^{1,\alpha}(\R),\theta=0\mathrm{\ on\ }y=-h\right\},\]
equipped with
\[\|\theta\|_Y\coloneqq\|\theta\|_{C_\per^{0,\alpha}(\overline{\Omega_h})\cap H_\per^1(\Omega_h)}+\|\SL\theta\|_{C_\per^{1,\alpha}(\R)},\]
and assume that $\lambda\neq 0$. Then
\[\T(\lambda)\colon Y\to\tilde X,\quad\T(\lambda)\theta=\left(-\frac{\SL\theta}{\lambda},\theta-\frac{\psi_y^\lambda}{\lambda}V[\SL\theta]\right)\]
is an isomorphism. Its inverse is given by
\[[\T(\lambda)]^{-1}(\delta w,\delta\phi)=\delta\phi-\psi_y^\lambda V[\delta w].\]
\end{lemma}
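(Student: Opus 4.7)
The plan is to verify three things: (a) the map $\T(\lambda)$ is a bounded linear operator $Y\to\tilde X$; (b) the candidate inverse $S(\delta w,\delta\phi)\coloneqq\delta\phi-\psi_y^\lambda V[\delta w]$ is a bounded linear operator $\tilde X\to Y$; and (c) the two operators compose to the respective identities. Since both $\T(\lambda)$ and $S$ are manifestly linear and are built from the trace operator $\SL$, multiplication by the smooth function $\psi_y^\lambda$ (which exists on all of $[-h,0]$ by global Lipschitz continuity of $\gamma$), and the Poisson solver $V[\cdot]$ of \eqref{eq:BVP_for_V} (which is a bounded map $C_{\per,\e}^{1,\alpha}(\R)\to C_{\per,\e}^{1,\alpha}(\overline{\Omega_h})\cap H_\per^1(\Omega_h)$ by standard Schauder theory and energy estimates), continuity will be automatic once the maps are shown to land in the correct spaces.

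For (a), given $\theta\in Y$, the first component $-\SL\theta/\lambda$ inherits the required regularity, parity, periodicity, and zero-average properties directly from $\SL\theta\in C_{0,\per,\e}^{1,\alpha}(\R)$ and $\lambda\ne 0$. For the second component $\theta-(\psi_y^\lambda/\lambda)V[\SL\theta]$, the key check is the boundary behavior: on $y=0$ one has $V[\SL\theta]|_{y=0}=\SL\theta$ and $\psi_y^\lambda(0)=\lambda$, so the second component vanishes there; on $y=-h$ both $\theta$ and $V[\SL\theta]$ vanish. Regularity ($C^{0,\alpha}\cap H^1$, evenness, periodicity) is transferred from the ingredients.

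For (b), given $(\delta w,\delta\phi)\in\tilde X$ and $\theta\coloneqq\delta\phi-\psi_y^\lambda V[\delta w]$, note that on $y=-h$ both $\delta\phi$ and $V[\delta w]$ vanish, so $\theta$ does too. Evaluating at $y=0$ yields $\SL\theta=-\lambda\,\delta w$ (using $\delta\phi|_{y=0}=0$, $V[\delta w]|_{y=0}=\delta w$, and $\psi_y^\lambda(0)=\lambda$); this belongs to $C_{0,\per,\e}^{1,\alpha}(\R)$ because $\delta w$ does. The interior regularity $\theta\in C_{\per,\e}^{0,\alpha}(\overline{\Omega_h})\cap H_\per^1(\Omega_h)$ follows since $V[\delta w]\in C^{1,\alpha}\cap H^1$ given $\delta w\in C^{1,\alpha}$.

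Finally, (c) is a short algebraic verification using linearity of $V[\cdot]$ in its Dirichlet data: for $\T(\lambda)\circ S$, substituting $\SL\theta=-\lambda\delta w$ into the formula for $\T(\lambda)\theta$ recovers $(\delta w,\delta\phi)$ because the two $V$-terms cancel after pulling $-\lambda$ out; for $S\circ\T(\lambda)$, substituting $\delta w=-\SL\theta/\lambda$ into $\delta\phi-\psi_y^\lambda V[\delta w]$ again produces exact cancellation of the auxiliary harmonic extension. I do not expect any serious obstacle: the only mildly delicate point is keeping the boundary bookkeeping straight, specifically remembering that $V[\cdot]$ here takes its Dirichlet data \emph{directly} (not shifted by $h$) and using $\psi_y^\lambda(0)=\lambda$ to make the trace at $y=0$ vanish; everything else is routine linearity and Schauder regularity.
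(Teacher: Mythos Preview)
Your proof is correct and follows exactly the same approach as the paper, which merely asserts that both maps are well-defined and that a simple computation shows they are mutually inverse. You have supplied precisely the boundary bookkeeping and regularity checks that the paper leaves implicit, in particular the crucial use of $\psi_y^\lambda(0)=\lambda$ and the boundary values of $V[\cdot]$ to verify that each map lands in the correct target space.
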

\begin{proof}
Both $\T(\lambda)$ and $[\T(\lambda)]^{-1}$ are well-defined, and a simple computation shows that they are inverse to each other.
\end{proof}
			
Let us now consider a trivial solution $(\lambda,0,0,0)$, $\lambda\ne0$. In view of the isomorphism $\T(\lambda)$, we introduce
\[\LL(\lambda)\coloneqq [F^{2,3}_{(w,\phi)}(\lambda,0,0,0)]\circ [\T(\lambda)]\colon Y\to\tilde X,\]
where $F^{2,3}=(F^2,F^3)$ consists of the second and third component of $F=(F^1,F^2,F^3)$. In order to simplify $\LL(\lambda)\theta=(\LL^1(\lambda),\LL^2(\lambda))\theta$, we notice that
\[\A_w(\SL\theta)+\A_\phi(\psi_y^\lambda V[\SL\theta])=(\psi_y^\lambda-\lambda)V[S\theta].\]
Indeed, the function $f\coloneqq\A_w(\SL\theta)+\A_\phi(\psi_y^\lambda V[\SL\theta])-(\psi_y^\lambda-\lambda)V[S\theta]$ satisfies
\begin{align*}
	&\Delta f=-2\gamma(\psi^\lambda)\partial_yV[\SL\theta]-\gamma'(\psi^\lambda)\psi_y^\lambda V[\SL\theta]-\psi_{yyy}^\lambda V[\SL\theta]-2\psi_{yy}^\lambda\partial_y V[\SL\theta]\\
	&=-2\gamma(\psi^\lambda)\partial_yV[\SL\theta]-\gamma'(\psi^\lambda)\psi_y^\lambda V[\SL\theta]+\gamma'(\psi^\lambda)\psi_y^\lambda V[\SL\theta]+2\gamma(\psi^\lambda)\partial_y V[\SL\theta]\\
	&=0
\end{align*}
in $\Omega_h$ and vanishes at the top and bottom. By \eqref{eq:Fder_trivial} and additionally using $\Ch\SL\theta_x=\SL\partial_y V[\SL\theta]$, we can thus write
\begin{subequations}\label{eq:Ltheta}
	\begin{align}
		\LL^1(\lambda)\theta&=-\lambda^{-1}\SL\theta-\lambda^{-1}\partial_x^{-1}(\Ch)^{-1}\left(\Pro\SL\partial_y\A_\phi\theta+\lambda^{-1}(\gamma(0)-\lambda^{-1}g)\SL\theta\right)\label{eq:L1theta1}\\
		&=-\lambda^{-1}\partial_x^{-1}(\Ch)^{-1}\left(\Pro\SL\partial_y(\A_\phi\theta+V[\SL\theta])+\lambda^{-1}(\gamma(0)-\lambda^{-1}g)\SL\theta\right),\\
		\LL^2(\lambda)\theta&=\theta-(\A_\phi\theta+V[\SL\theta]).\label{eq:L1theta2}
	\end{align}
\end{subequations}
Notice that, under the assumption $\theta\in C_\per^{2,\alpha}(\overline{\Omega_h})$, $\LL^2(\lambda)\theta$ is the unique solution of
\begin{align}\label{eq:L2theta_rewritten}
	\Delta[\LL^2(\lambda)\theta]=\Delta\theta+\gamma'(\psi^\lambda)\theta\text{ in }\Omega_h,\quad\LL^2(\lambda)\theta=0\text{ on }y=0\text{ and }y=-h,
\end{align}
and $\LL^1(\lambda)\theta$ is (in the set of periodic functions with zero average) uniquely determined by
\begin{align}\label{eq:L1theta_rewritten}
	-\lambda^2\partial_x\Ch(\LL^1(\lambda)\theta)=\lambda\Pro\SL\partial_y(\theta-\LL^2(\lambda)\theta)+(\gamma(0)-\lambda^{-1}g)\SL\theta.
\end{align}
Furthermore, notice that
\begin{align*}
	&([F^1_{(w,\phi)}(\lambda,0,0,0)]\circ [\T(\lambda)])\theta\\
	&=-\lambda^{-1}\left(\langle\SL\partial_y((1-\lambda^{-1}\psi^\lambda_y)V[\SL\theta])\rangle+\langle\SL\partial_y\A_\phi\theta\rangle\right)\\
	&=-\lambda^{-1}\langle\SL\partial_y\A_\phi\theta\rangle.	
\end{align*}
Therefore, the action of $F_{(q,w,\phi)}(\lambda,0,0,0)$ can be expressed in matrix form as
\[[F_{(q,w,\phi)}(\lambda,0,0,0)]\begin{pmatrix}\text{id}_\R&0\\0&\T(\lambda)\end{pmatrix}\begin{pmatrix}\delta q\\\theta\end{pmatrix}=\begin{pmatrix}\lambda^{-2}\delta q-\lambda^{-1}\langle\SL\partial_y\A_\phi\theta\rangle\\\LL(\lambda)\theta\end{pmatrix}.\]
Hence, having an application of Theorem \ref{thm:CrandallRabinowitz} in mind, we easily see that
\begin{gather}
	(\delta q,\delta w,\delta\phi)\in\ker F_{(q,w,\phi)}(\lambda,0,0,0)\label{eq:char_kernel}\\
	\Leftrightarrow\quad\theta\in\ker\LL(\lambda)\text{ and }\delta q=\lambda\langle\SL\partial_y\A_\phi\theta\rangle,\text{ where }(\delta w,\delta\phi)=\T(\lambda)\theta.\nonumber
\end{gather}
Similarly, from
\begin{align*}
	&[F_{\lambda,(q,w,\phi)}(\lambda,0,0,0)]\begin{pmatrix}\text{id}_\R&0\\0&\T(\lambda)\end{pmatrix}\begin{pmatrix}\delta q\\\theta\end{pmatrix}\\
	&=\begin{pmatrix}-2\lambda^{-3}\delta q+\lambda^{-2}\langle\SL\partial_y\A_\phi\theta\rangle-\lambda^{-1}\langle\SL\partial_y\A_{\phi\lambda}\theta\rangle\\\LL_\lambda(\lambda)\theta\end{pmatrix}\\
	&\phantom{=\;}-[F_{(q,w,\phi)}(\lambda,0,0,0)]\begin{pmatrix}0&0\\0&\T_\lambda(\lambda)\end{pmatrix}\begin{pmatrix}\delta q\\\theta\end{pmatrix}
\end{align*}
we infer that
\begin{gather}
	[F_{\lambda,(q,w,\phi)}(\lambda,0,0,0)](\delta q,\delta w,\delta\phi)\in\im F_{(q,w,\phi)}(\lambda,0,0,0)\label{eq:char_image}\\
	\Leftrightarrow\quad\LL_\lambda(\lambda)\theta\in\im\LL(\lambda),\text{ where }(\delta w,\delta\phi)=\T(\lambda)\theta.\nonumber
\end{gather}
			
\subsubsection{Kernel}
As we have just seen in \eqref{eq:char_kernel}, it suffices to study the kernel of $\LL$; here and in the following, we will suppress the dependency of $\LL$ on $\lambda$. From \eqref{eq:Ltheta}, \eqref{eq:L2theta_rewritten}, and \eqref{eq:L1theta_rewritten} we infer that
\[\LL\theta=0\;\Longleftrightarrow\;\theta\in C_\per^{2,\alpha}(\overline{\Omega_h})\text{ and }\begin{cases}\Delta\theta+\gamma'(\psi^\lambda)\theta=0\quad\text{and}\\\lambda\Pro\SL\partial_y\theta+(\gamma(0)-\lambda^{-1}g)\SL\theta=0;\end{cases}\]
notice that $\LL\theta=0$ implies $\SL\theta\in C^{2,\alpha}_{0,\per}(\R)$ by \eqref{eq:L1theta1} and thus $\theta=\A_\phi\theta+V[\SL\theta]\in C_\per^{2,\alpha}(\overline{\Omega_h})$ by \eqref{eq:L1theta2}. Let us now write $\theta(x,y)=\sum_{k=0}^\infty\theta_k(y)\cos(k\nu x)$ as a Fourier series. Then we easily see that $\LL\theta=0$ if and only if
\begin{align}\label{eq:LLtheta0=0}
	\left(\partial_y^2+\gamma'(\psi^\lambda)\right)\theta_0=0
\end{align}
and, for all $k\ge1$,
\begin{subequations}\label{eq:LLthetak=0}
	\begin{align}
		\left(\partial_y^2+\gamma'(\psi^\lambda)-(k\nu)^2\right)\theta_k&=0,\label{eq:LL1thetak=0}\\
		\lambda\partial_y\theta_k(0)+(\gamma(0)-\lambda^{-1}g)\theta_k(0)&=0;
	\end{align}
\end{subequations}
notice that $\theta_0(0)=0$ is already included in the definition of $Y$. Henceforth, we shall assume \eqref{ass:SL-spectrum}. Thus, we see that \eqref{eq:LLtheta0=0} only has the trivial solution $\theta_0\equiv0$, recalling that $\theta_0(0)=\theta_0(-h)=0$ by membership of $\theta$ in $Y$.
			
Let us now turn to $k\ge 1$ and recall that $\theta_k(-h)=0$ by $\theta\in Y$. First suppose that $-(k\nu)^2$ is in the Dirichlet spectrum of $-\partial_y^2-\gamma'(\psi^\lambda)$ on $[-h,0]$. Then necessarily $\theta_k(0)=0$ provided \eqref{eq:LL1thetak=0} is satisfied. Hence, \eqref{eq:LLthetak=0} can only hold if $\theta_k\equiv0$. On the other hand, suppose that $-(k\nu)^2$ is not in the Dirichlet spectrum of $-\partial_y^2-\gamma'(\psi^\lambda)$ on $[-h,0]$. Then we find that \eqref{eq:LL1thetak=0} is equivalent to the statement that $\theta_k$ is a multiple of $\beta^{-(k\nu)^2,\lambda}$, where $\beta=\beta^{\mu,\lambda}$ is defined by means of \eqref{eq:beta}. 

To sum up, \eqref{eq:LLthetak=0} has a nontrivial solution $\theta_k$ if and only if the dispersion relation
\[d(-(k\nu)^2,\lambda)=0,\]
with $d$ given in \eqref{eq:d(k,lambda)}, is satisfied, and in this case $\theta_k$ is a multiple of $\beta^{-(k\nu)^2,\lambda}$.
\begin{remark}
	The quantity $d(\mu,\lambda)$ is at first defined if and only if $\mu$ is not in the Dirichlet spectrum of $-\partial_y^2-\gamma'(\psi^\lambda)$ on $[-h,0]$. If this property fails to hold, we set $d(\mu,\lambda)\coloneqq\infty$ in the following.
\end{remark}
We summarize our results concerning the kernel:
\begin{lemma}\label{lma:kernel}
	Given $\lambda\neq 0$ and under the assumption \eqref{ass:SL-spectrum}, a function $\theta\in Y$, admitting the Fourier decomposition $\theta(x,y)=\sum_{k=0}^\infty\theta_k(y)\cos(k\nu x)$, is in the kernel of $\LL(\lambda)$ if and only if $\theta_0=0$ and for each $k\ge 1$
	\begin{enumerate}[label=(\alph*)]
		\item $\theta_k=0$, or
		\item $-(k\nu)^2$ is not in the Dirichlet spectrum of $-\partial_y^2-\gamma'(\psi^\lambda)$ on $[-h,0]$, $\theta_k$ is a multiple of $\beta^{-(k\nu)^2,\lambda}$, and the dispersion relation
		\[d(-(k\nu)^2,\lambda)=0\]
		holds, with $d$ given in \eqref{eq:d(k,lambda)}.
	\end{enumerate}
\end{lemma}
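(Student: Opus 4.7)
My plan is to exploit the constant-coefficient-in-$x$ structure of $\LL(\lambda)$ and pass to the cosine Fourier decomposition $\theta(x,y) = \sum_{k\ge 0}\theta_k(y)\cos(k\nu x)$, thereby reducing the operator equation to a sequence of decoupled boundary-value problems on $[-h,0]$. First I would unpack what $\LL(\lambda)\theta=0$ means in terms of $\theta$ itself: by \eqref{eq:L2theta_rewritten} the condition $\LL^2(\lambda)\theta=0$ says that $\theta$ satisfies $\Delta\theta + \gamma'(\psi^\lambda)\theta = 0$ in $\Omega_h$ with $\theta=0$ on $y=-h$, while \eqref{eq:L1theta_rewritten} translates $\LL^1(\lambda)\theta=0$ into the scalar trace identity $\lambda\Pro\SL\partial_y\theta + (\gamma(0)-\lambda^{-1}g)\SL\theta = 0$. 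Here the regularity $\theta\in C_\per^{2,\alpha}(\overline{\Omega_h})$ needed to take pointwise traces is bootstrapped from \eqref{eq:L1theta1}--\eqref{eq:L1theta2}, as the text preceding the lemma records. Substituting the Fourier series and using orthogonality gives exactly \eqref{eq:LLtheta0=0} for the zeroth mode and \eqref{eq:LLthetak=0} for each $k\ge 1$.

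For $k=0$, the second equation of \eqref{eq:LLthetak=0} is absent, and the ODE \eqref{eq:LLtheta0=0} is complemented by $\theta_0(-h)=0$ (from the bottom boundary condition in $Y$) and $\theta_0(0) = \langle\SL\theta\rangle = 0$ (since $\SL\theta\in C_{0,\per,\e}^{1,\alpha}(\R)$ has zero mean). Assumption \eqref{ass:SL-spectrum} then forces $\theta_0\equiv 0$. For $k\ge 1$ I would split into two cases. If $-(k\nu)^2$ lies in the Dirichlet spectrum of $-\partial_y^2-\gamma'(\psi^\lambda)$ on $[-h,0]$, then any nonzero solution of \eqref{eq:LL1thetak=0} with $\theta_k(-h)=0$ is a multiple of the corresponding eigenfunction and hence additionally satisfies $\theta_k(0)=0$; the boundary condition then reduces to $\lambda\partial_y\theta_k(0)=0$, so $\partial_y\theta_k(0)=0$ since $\lambda\ne 0$, and uniqueness of the ODE Cauchy problem with vanishing initial data at $y=0$ forces $\theta_k\equiv 0$. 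If instead $-(k\nu)^2$ is not in the Dirichlet spectrum, then $\beta^{-(k\nu)^2,\lambda}$ is well-defined by \eqref{eq:beta} and spans the one-dimensional space of solutions of \eqref{eq:LL1thetak=0} with $\theta_k(-h)=0$; writing $\theta_k = c\beta^{-(k\nu)^2,\lambda}$ and using $\beta^{-(k\nu)^2,\lambda}(0)=1$, the remaining boundary condition becomes $c\lambda\bigl[\beta_y^{-(k\nu)^2,\lambda}(0)+\lambda^{-1}\gamma(0)-\lambda^{-2}g\bigr] = c\lambda\,d(-(k\nu)^2,\lambda) = 0$ in view of \eqref{eq:d(k,lambda)}. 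Since $\lambda\ne 0$, a nontrivial $\theta_k$ exists precisely when $d(-(k\nu)^2,\lambda)=0$.

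The main (minor) obstacle is bookkeeping: one must justify the mode-by-mode decoupling, i.e.\ that every operation appearing in \eqref{eq:L2theta_rewritten} and \eqref{eq:L1theta_rewritten} (Laplacian, multiplication by the $x$-independent coefficient $\gamma'(\psi^\lambda)$, trace at $y=0$, projection $\Pro$, the Hilbert transform $\Ch$, and horizontal differentiation) acts diagonally on the Fourier basis $\{\cos(k\nu x)\}_{k\ge 0}$, which it does by inspection. Once the decoupling is in place, the argument above is a routine Sturm--Liouville dichotomy and yields exactly the alternative stated in the lemma.
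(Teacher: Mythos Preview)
Your proposal is correct and follows essentially the same approach as the paper's own proof: reduce $\LL(\lambda)\theta=0$ via \eqref{eq:L2theta_rewritten}--\eqref{eq:L1theta_rewritten} to the elliptic equation plus trace identity (with the same regularity bootstrap from \eqref{eq:L1theta1}--\eqref{eq:L1theta2}), pass to Fourier modes, dispose of $\theta_0$ using \eqref{ass:SL-spectrum} and the zero-mean condition built into $Y$, and for $k\ge1$ run the Sturm--Liouville dichotomy according to whether $-(k\nu)^2$ is a Dirichlet eigenvalue. Your treatment of the eigenvalue case is in fact slightly more detailed than the paper's (you spell out why $\theta_k(0)=0$ together with the trace condition forces $\partial_y\theta_k(0)=0$ and hence $\theta_k\equiv0$), but otherwise the arguments coincide.
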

			
\subsubsection{Range}\label{appx:range}
Before we proceed with the investigation of the transversality condition, we first prove that the range of $\LL$ can be written as an orthogonal complement with respect to a suitable inner product. This will be helpful for characterizing the transversality condition. To this end, we introduce the inner product
\[\langle(f_1,g_1),(f_2,g_2)\rangle\coloneqq\left\langle\sqrt{\Ch\partial_x}f_1,\sqrt{\Ch\partial_x}f_2\right\rangle_{L^2([0,L])}+\langle\nabla g_1,\nabla g_2\rangle_{L^2(\Omega_h^*)}\]
for $f_1,f_2\in H^{1/2}_{0,\per}(\R)$ (fractional Sobolev space), $g_1,g_2\in H^1_\per(\Omega_h)$, where $\sqrt{\Ch\partial_x}$ is the Fourier multiplier with symbol $\sqrt{k\coth(k\nu h)}$ and where $\Omega_h^*=(0,L)\times(-h,0)$; in order to avoid misunderstanding, we point out that the index \enquote{0} in $H^{1/2}_{0,\per}(\R)$ means \enquote{zero average} as before. This inner product is positive definite on the space
\[Z\coloneqq H^{1/2}_{0,\per}(\R)\times\left\{\tilde g\in H^1_\per(\Omega_h):\tilde g\big|_{y=-h}=0\right\}.\]
Notice that
\[\left\langle\sqrt{\Ch\partial_x}f_1,\sqrt{\Ch\partial_x}f_2\right\rangle_{L^2([0,L])}=\langle f_1,\Ch\partial_x f_2\rangle_{L^2([0,L])}=\langle\Ch\partial_x f_1,f_2\rangle_{L^2([0,L])}\]
if $f_1,f_2\in H^1_{0,\per}(\R)$ and that
\[\langle\nabla g_1,\nabla g_2\rangle_{L^2(\Omega_h^*)}=-\langle g_1,\Delta g_2\rangle_{L^2(\Omega_h^*)}+\langle\SL g_1,\SL\partial_y g_2\rangle_{L^2([0,L])}\]
if $g_2\in H^2_\per(\Omega_h)$ and $g_1=0$ on $y=-h$.
			
Recalling \eqref{eq:Ltheta} we now compute for smooth $\theta,\vartheta\in Y$
\begin{align*}
	&\langle(-\lambda\SL\theta,\theta),\LL\vartheta\rangle\\
	&=\langle\SL\theta,\Pro\SL\partial_y(\A_\phi\vartheta+V[S\vartheta])+\lambda^{-1}(\gamma(0)-\lambda^{-1}g)\SL\vartheta\rangle_{L^2([0,L])}\\
	&\phantom{=\;}-\langle\theta,\Delta(\vartheta-(\A_\phi\vartheta+V[\SL\vartheta]))\rangle_{L^2(\Omega_h^*)}\\
	&\phantom{=\;}+\langle\SL\theta,\SL\partial_y(\vartheta-(\A_\phi\vartheta+V[\SL\vartheta]))\rangle_{L^2([0,L])}\\
	&=\lambda^{-1}(\gamma(0)-\lambda^{-1}g)\langle\SL\theta,\SL\vartheta\rangle_{L^2([0,L])}+\langle\nabla\theta,\nabla\vartheta\rangle_{L^2(\Omega_h^*)}-\langle\theta,\gamma'(\psi^\lambda)\vartheta\rangle_{L^2(\Omega_h^*)}
\end{align*}
making use of $\langle\SL\theta\rangle=0$. Noticing that the terms at the beginning and at the end of this computation only involve at most first derivatives of $\theta$ and $\vartheta$, an easy approximation argument shows that this relation also holds for general $\theta,\vartheta\in Y$. Moreover, since the last expression is symmetric in $\theta$ and $\vartheta$, we can also go in the opposite direction with reversed roles and arrive at the symmetry property
\[\langle(-\lambda\SL\theta,\theta),\LL\vartheta\rangle=\langle\LL\theta,(-\lambda\SL\vartheta,\vartheta)\rangle.\]
Thus, the range of $\LL$ is the orthogonal complement of $\{(-\lambda\SL\theta,\theta):\theta\in\ker\LL\}$ with respect to $\langle\cdot,\cdot\rangle$. Indeed, one inclusion is an immediate consequence of the symmetry property and the other inclusion follows from the facts that we already know that $\LL$ is Fredholm with index zero and that $\LL$ gains no additional kernel when extended to functions $\theta$ with $(\SL\theta,\theta)\in Z$.
			
\subsubsection{Transversality condition}
We can now characterize the transversality condition, assuming that the kernel of $\LL$ is one-dimensional and spanned by $\theta(x,y)=\beta^{-(k\nu)^2,\lambda}(y)\cos(k\nu x)$ for some $k\in\N$. To this end, in view of \eqref{eq:char_image}, we have to investigate whether $\LL_\lambda\theta$ is not in the range of $\LL$, which is equivalent to
\[\langle(-\lambda\SL\theta,\theta),\LL_\lambda\theta\rangle\neq 0\]
by the previous section. Differentiating \eqref{eq:Ltheta} with respect to $\lambda$, for general $\theta$ it holds that
\begin{align*}
	\LL_\lambda^1\theta&=-\lambda^{-1}\LL^1\theta-\lambda^{-1}\partial_x^{-1}(\Ch)^{-1}\left(\Pro\SL\partial_y\A_{\phi\lambda}\theta-(\lambda^{-2}\gamma(0)-2\lambda^{-3}g)\SL\theta\right),\\
	\LL_\lambda^2\theta&=-\A_{\phi\lambda}\theta,
\end{align*}
where $\A_{\phi\lambda}\theta\in C_\per^{2,\alpha}(\overline{\Omega_h})$ is the unique solution of
\begin{align*}
	\Delta(\A_{\phi\lambda}\theta)&=-\gamma''(\psi^\lambda)\partial_\lambda\psi^\lambda\theta&\text{in }\Omega_h,\\
	\A_{\phi\lambda}\theta&=0&\text{on }y=0\text{ and }y=-h.
\end{align*}
Thus, we have
\begin{align*}
	&\langle(-\lambda\SL\theta,\theta),\LL_\lambda\theta\rangle\\
	&=\langle\SL\theta,\Pro\SL\partial_y\A_{\phi\lambda}\theta-(\lambda^{-2}\gamma(0)-2\lambda^{-3}g)\SL\theta\rangle_{L^2([0,L])}\\
	&\phantom{=\;}-\langle\theta,\Delta(-\A_{\phi\lambda}\theta)\rangle_{L^2(\Omega_h^*)}+\langle\SL\theta,-\SL\partial_y\A_{\phi\lambda}\theta\rangle_{L^2([0,L])}\\
	&=-\langle\theta,\gamma''(\psi^\lambda)\partial_\lambda\psi^\lambda\theta\rangle_{L^2(\Omega_h^*)}-(\lambda^{-2}\gamma(0)-2\lambda^{-3}g)\langle\SL\theta,\SL\theta\rangle_{L^2([0,L])}
\end{align*}
whenever $\LL^1\theta=0$. Now let $\theta(x,y)=\beta^{-(k\nu)^2,\lambda}(y)\cos(k\nu x)$ and note that $f=\partial_\lambda\beta^{-(k\nu)^2,\lambda}$ solves
\begin{gather*}
	f_{yy}+(\gamma'(\psi^\lambda)-(k\nu)^2)f=-\gamma''(\psi^\lambda)\beta^{-(k\nu)^2,\lambda}\partial_\lambda\psi^\lambda\text{ on }[-h,0],\\
	f(0)=f(-h)=0.
\end{gather*}
Therefore,
\begin{align*}
	&\frac2L\langle(-\lambda\SL\theta,\theta),\LL_\lambda\theta\rangle\\
	&=\int_{-h}^0\beta^{-(k\nu)^2,\lambda}\left(\partial_\lambda\beta^{-(k\nu)^2,\lambda}_{yy}+(\gamma'(\psi^\lambda)-(k\nu)^2)\partial_\lambda\beta^{-(k\nu)^2,\lambda}\right)\,dy\\
	&\phantom{=\;}-(\lambda^{-2}\gamma(0)-2\lambda^{-3}g)\\
	&=\partial_\lambda\beta_y^{-(k\nu)^2,\lambda}(0)-\lambda^{-2}\gamma(0)+2\lambda^{-3}g
\end{align*}
after integrating by parts. Thus, we have proved:
\begin{lemma}\label{lma:transversality_condition}
	Given $\lambda\neq 0$ and assuming that the kernel of $\LL(\lambda)$ is one-dimensional spanned by $\theta(x,y)=\beta^{-(k\nu)^2,\lambda}(y)\cos(k\nu x)$ for some $k\ge 1$, the transversality condition
	\[\LL_\lambda(\lambda)\theta\notin\im\LL(\lambda)\]
	is equivalent to
	\[d_\lambda(-(k\nu)^2,\lambda)\neq 0,\]
	with $d$ given in \eqref{eq:d(k,lambda)}.
\end{lemma}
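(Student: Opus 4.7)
The plan is to translate the transversality condition via the inner product characterization of $\im\LL(\lambda)$ from the previous subsection and then carry out an explicit computation on the Fourier mode that spans the kernel. More precisely, the equivalence
\[
\LL_\lambda(\lambda)\theta\notin\im\LL(\lambda)\quad\Longleftrightarrow\quad\langle(-\lambda\SL\theta,\theta),\LL_\lambda(\lambda)\theta\rangle\ne0
\]
follows directly from the fact, established in Section \ref{appx:range}, that $\im\LL(\lambda)$ is the $\langle\cdot,\cdot\rangle$-orthogonal complement of $\{(-\lambda\SL\vartheta,\vartheta):\vartheta\in\ker\LL(\lambda)\}$, together with the assumption that the kernel is one-dimensional and spanned by $\theta$. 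So the task reduces to computing this inner product explicitly and relating the result to $d_\lambda(-(k\nu)^2,\lambda)$.

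The second step is to differentiate the formulas \eqref{eq:Ltheta} for $\LL^1(\lambda)\theta$ and $\LL^2(\lambda)\theta$ with respect to $\lambda$; the only nontrivial ingredient is $\A_{\phi\lambda}\theta$, which by differentiating the elliptic problem defining $\A_\phi\theta$ is the unique solution of $\Delta(\A_{\phi\lambda}\theta)=-\gamma''(\psi^\lambda)\partial_\lambda\psi^\lambda\,\theta$ in $\Omega_h$ with zero boundary data. Substituting into $\langle(-\lambda\SL\theta,\theta),\LL_\lambda(\lambda)\theta\rangle$ and using the integration-by-parts formulas from Section \ref{appx:range} (together with $\LL^1(\lambda)\theta=0$, which holds because $\theta\in\ker\LL(\lambda)$, so that the first term in $\LL_\lambda^1\theta$ drops out), all the complicated boundary contributions collapse to
\[
\langle(-\lambda\SL\theta,\theta),\LL_\lambda(\lambda)\theta\rangle=-\langle\theta,\gamma''(\psi^\lambda)\partial_\lambda\psi^\lambda\,\theta\rangle_{L^2(\Omega_h^*)}-(\lambda^{-2}\gamma(0)-2\lambda^{-3}g)\|\SL\theta\|_{L^2([0,L])}^2.
\]

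The third step is to insert the explicit form $\theta(x,y)=\beta^{-(k\nu)^2,\lambda}(y)\cos(k\nu x)$ and reduce the $y$-integral using the ODE satisfied by $\partial_\lambda\beta^{-(k\nu)^2,\lambda}$, which is obtained by differentiating \eqref{eq:beta} in $\lambda$: it solves the same linear ODE with right-hand side $-\gamma''(\psi^\lambda)\beta^{-(k\nu)^2,\lambda}\partial_\lambda\psi^\lambda$ and zero boundary values at both endpoints. Multiplying this identity by $\beta^{-(k\nu)^2,\lambda}$ and integrating by parts twice over $[-h,0]$, the interior terms match the $\gamma''$ contribution above, and the only surviving boundary term is $\partial_\lambda\beta_y^{-(k\nu)^2,\lambda}(0)$ (the boundary contribution at $y=-h$ vanishes since $\partial_\lambda\beta^{-(k\nu)^2,\lambda}(-h)=0$, and the one at $y=0$ involving $\beta_y^{-(k\nu)^2,\lambda}(0)$ vanishes because $\partial_\lambda\beta^{-(k\nu)^2,\lambda}(0)=0$). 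After dividing by $L/2$, this leads to
\[
\tfrac{2}{L}\langle(-\lambda\SL\theta,\theta),\LL_\lambda(\lambda)\theta\rangle=\partial_\lambda\beta_y^{-(k\nu)^2,\lambda}(0)-\lambda^{-2}\gamma(0)+2\lambda^{-3}g,
\]
and the right-hand side is exactly $d_\lambda(-(k\nu)^2,\lambda)$ as read off from \eqref{eq:d(k,lambda)}. The only mild obstacle is bookkeeping the signs and the cancellation of boundary terms when passing between the weak pairing defining $\langle\cdot,\cdot\rangle$ and the strong $y$-ODE for $\partial_\lambda\beta^{-(k\nu)^2,\lambda}$; once this is done carefully, the claimed equivalence follows.
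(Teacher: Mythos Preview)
Your proposal is correct and follows essentially the same route as the paper: reduce the transversality condition to nonvanishing of $\langle(-\lambda\SL\theta,\theta),\LL_\lambda(\lambda)\theta\rangle$ via the range characterization, differentiate \eqref{eq:Ltheta} in $\lambda$ (introducing $\A_{\phi\lambda}\theta$), compute the pairing using $\LL^1(\lambda)\theta=0$, and then insert the explicit mode and integrate by parts against the ODE for $\partial_\lambda\beta^{-(k\nu)^2,\lambda}$ to arrive at $\partial_\lambda\beta_y^{-(k\nu)^2,\lambda}(0)-\lambda^{-2}\gamma(0)+2\lambda^{-3}g=d_\lambda(-(k\nu)^2,\lambda)$. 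The intermediate and final formulas you obtain match the paper exactly.
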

			
\subsubsection{Result on local bifurcation}
We are now in the position to apply Theorem \ref{thm:CrandallRabinowitz} in order to prove Theorem \ref{thm:LocalBifurcation}.
\begin{proof}[Proof of Theorem \ref{thm:LocalBifurcation}]
	It is straightforward to apply Theorem \ref{thm:CrandallRabinowitz} in view of Lemmas \ref{lma:M_prop}, \ref{lma:kernel}, and \ref{lma:transversality_condition}, recalling \eqref{eq:char_kernel} and \eqref{eq:char_image}. Moreover, the asymptotic expansion tells us that $w^s\neq 0$ after possibly shrinking $\varepsilon$. Furthermore, after again possibly shrinking $\varepsilon$, we see that also \eqref{eq:additional_requirements} is satisfied along the curve. Notice also that here $\delta q=0$ in \eqref{eq:char_kernel}. The only claim that is not obvious is that $o(s)/s\to 0$ in $\R\times C_{0,\per,\e}^{2,\alpha}(\R)\times C_{\per,\e}^{2,\alpha}(\overline{\Omega_h})$ as $s\to0$ because this at first holds only in $X$. But indeed, by Lemma \ref{lma:M_prop} and $\M_\lambda(\lambda_0,0,0,0)=0$, $[\M_{(w,\phi)}(\lambda_0,0,0,0)](\T(\lambda_0)\theta)=\T(\lambda_0)\theta$,
	\begin{align*}
		\frac{o(s)}{s}&=\frac{(q^s,w^s,\phi^s)-(0,s\T(\lambda_0)\theta)}{s}\\
		&=\frac{\M(\lambda^s,q^s,w^s,\phi^s)-\M(\lambda_0,0,0,0)}{s}\\
		&\phantom{=\;}-\frac{[\M_{(\lambda,q,w,\phi)}(\lambda_0,0,0,0)](\lambda^s-\lambda_0,q^s,w^s,\phi^s)}{s}\\
		&\phantom{=\;}+[\M_{(q,w,\phi)}(\lambda_0,0,0,0)]\frac{o(s)}{s}
	\end{align*}
	also tends to $0$ in $\R\times C_{0,\per,\e}^{2,\alpha}(\R)\times C_{\per,\e}^{2,\alpha}(\overline{\Omega_h})$.
\end{proof}
			
\subsection{Conditions for local bifurcation}\label{appx:LocalBifurcation:cond}
A shortcoming of Theorem \ref{thm:LocalBifurcation} is that the dispersion relation and transversality condition are more or less stated as assumptions. Thus, it makes sense to study these conditions in more detail.

We first remark that the possibility of kernels of dimension larger than one cannot be excluded in general. In some cases, which we will study below, the kernel can be at most or is exactly one-dimensional. However, in general, kernels of arbitrarily large dimension should be expected to appear; for more discussion on this we refer to \cite{AasVar18,EHR09}.
\subsubsection{General conclusions}The term in the dispersion relation which deserves a closer look at is $\beta^{\mu,\lambda}_y(0)$. To this end and recalling \eqref{eq:beta}, the usual strategy is to introduce the Prüfer angle $\vartheta=\vartheta(y,\mu)$ corresponding to the initial value problem
\begin{gather*}
	-u''-\gamma'(\psi^\lambda)u=\mu u\text{ on }[-h,0],\\
	u(-h)=0,\quad u'(-h)=1,
\end{gather*}
as the unique continuous representative of $\mathrm{arg}(u'+iu)$ with $\vartheta(-h,\mu)=\mathrm{arg}(1)=0$. Thus,
\[\beta^{\mu,\lambda}_y(0)=\cot(\vartheta(0,\mu)),\]
and one is, for fixed $\lambda$, left to search for intersections of this quantity with the horizontal line $\mu\mapsto\lambda^{-2}g-\lambda^{-1}\gamma(0)$ when studying the dispersion relation.
In \cite{Varholm20} (using $z=-\mu$ instead of $\mu$ and with the length of the interval scaled to $1$) it was proved that $\beta^{\mu,\lambda}_y(0)$ is strictly monotonically decreasing in $\mu$ on each branch and satisfies the bounds
\begin{align}\label{eq:bounds_beta}
	v(\mu+\sup\gamma')\le\beta^{\mu,\lambda}_y(0)\le v(\mu+\inf\gamma')
\end{align}
with
\[v(z)\coloneqq\sqrt{-z}\coth(h\sqrt{-z})\]
on the (possibly empty) intervals
\[I_j=\begin{cases}(j^2\pi^2/h^2-\inf\gamma',(j+1)^2\pi^2/h^2-\sup\gamma'),&j\in\N,\\(-\infty,\pi^2/h^2-\sup\gamma'),&j=0.\end{cases}\]
Because of strict monotonicity it is clear that, for fixed $\lambda\ne0$, there is exactly one simple solution $\mu$ of the dispersion relation $d(\mu,\lambda)=0$ on each branch of the Prüfer graph.

\subsubsection*{Dispersion relation}
It is also of interest to know when we can expect to find a single negative solution. In the irrotational case, a well known criterion is that the Froude number is less than one, or equivalently, $\lambda^2<gh$. This can also be generalized, whereby a particular nice case is that $\gamma'$ is \enquote*{not too positive}, as we observe in the following.
\begin{proposition}\label{prop:sol_disprel_bound_gamma'}
	If $\sup \gamma'\le \pi^2/h^2$, then there is at most one non-positive solution $\mu$ to the equation $d(\mu, \lambda)=0$.
\end{proposition}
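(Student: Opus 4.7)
The argument rests on a single observation recalled from \cite{Varholm20} in the paragraph preceding \eqref{eq:bounds_beta}: the map $\mu\mapsto\beta_y^{\mu,\lambda}(0)$ is strictly monotonically decreasing in $\mu$ on each of its branches, i.e.\ on each maximal interval on which $\beta^{\mu,\lambda}$ is defined. Since $\lambda^{-2}g-\lambda^{-1}\gamma(0)$ does not depend on $\mu$, the equation $d(\mu,\lambda)=0$, equivalent to $\beta_y^{\mu,\lambda}(0)=\lambda^{-2}g-\lambda^{-1}\gamma(0)$, can have at most one solution on any such branch. My plan is therefore to show that $(-\infty,0]$ sits inside the first branch, up to at most a single endpoint.

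The branches are separated precisely by the points where $\beta^{\mu,\lambda}(0)=0$, i.e.\ by the Dirichlet eigenvalues of $-\partial_y^2-\gamma'(\psi^\lambda)$ on $[-h,0]$, so the first branch is $(-\infty,\mu_1)$, where $\mu_1$ denotes the smallest such eigenvalue. Combining the Rayleigh-quotient characterization of $\mu_1$ with the pointwise estimate $\gamma'(\psi^\lambda)\le\sup\gamma'$ and the well-known value $\pi^2/h^2$ of the first Dirichlet eigenvalue of $-\partial_y^2$ on $[-h,0]$ yields
\[
\mu_1=\inf_{\substack{u\in H_0^1((-h,0))\\ \|u\|_{L^2}=1}}\int_{-h}^{0}\!\bigl((u')^2-\gamma'(\psi^\lambda)u^2\bigr)\,dy\;\ge\;\frac{\pi^2}{h^2}-\sup\gamma'\;\ge\;0
\]
by the hypothesis $\sup\gamma'\le\pi^2/h^2$.

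Finally, I would dispose of the endpoint. If $\mu_1>0$, then $(-\infty,0]\subset(-\infty,\mu_1)$ lies inside the first branch, and strict monotonicity immediately gives at most one solution on $(-\infty,0]$. If instead $\mu_1=0$, then $0$ belongs to the Dirichlet spectrum, so by the convention that $d(\mu,\lambda)\coloneqq\infty$ at such points (introduced in the remark just before Lemma~\ref{lma:kernel}) the point $\mu=0$ is automatically not a solution of $d(\mu,\lambda)=0$, while $(-\infty,0)$ still sits in the first branch and monotonicity once more yields uniqueness. No substantial obstacle is expected; the only mildly delicate point is the equality case $\mu_1=0$, which the convention at Dirichlet eigenvalues cleanly handles.
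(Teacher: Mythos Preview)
Your proposal is correct and follows essentially the same route as the paper: the paper's proof also invokes the strict monotonicity of $\mu\mapsto\beta_y^{\mu,\lambda}(0)$ on each branch and observes that the leftmost branch has non-negative right endpoint (the paper cites the interval $I_0=(-\infty,\pi^2/h^2-\sup\gamma')$ from \eqref{eq:bounds_beta}, which encodes the same eigenvalue comparison you derive via the Rayleigh quotient). Your explicit treatment of the borderline case $\mu_1=0$ using the convention $d(\mu_1,\lambda)=\infty$ is a small extra detail the paper leaves implicit, but the arguments are otherwise the same.
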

\begin{proof}
	As already seen above, there is exactly one solution on the leftmost branch of $\mu \mapsto \beta_y^{\mu, \lambda}(0)$, and by assumption and \eqref{eq:bounds_beta} this branch has non-negative right endpoint.
\end{proof}

Now, if $\sup \gamma'< \pi^2/h^2$, then the leftmost branch of $\mu \mapsto \beta_y^{\mu, \lambda}(0)$ has positive right endpoint. Moreover, if additionally
\[
\lambda^{-2}g -\lambda^{-1}\gamma(0)> 
\begin{cases} 
	\sqrt{\inf \gamma'} \cot (h\sqrt{\inf \gamma'}) & \text{if } \inf \gamma'>0,\\
	1/h & \text{if } \inf \gamma'=0,\\
	\sqrt{|\inf \gamma'|} \coth (h\sqrt{|\inf \gamma'}|) & \text{if } \inf \gamma'<0,
\end{cases}
\]
then $\lambda^{-2}g-\lambda^{-1}\gamma(0)>\beta_y^{0,\lambda}(0)$. Therefore, there is exactly one negative solution $\mu_0$ to the equation $d(\mu,\lambda)=0$, and in fact Proposition \ref{prop:solvability_disprel} is proved.

\subsubsection*{Transversality condition}
We next try to find a pretty general sufficient condition for the transversality condition. For this, we will make use of the following general lemma.
\begin{lemma}\label{lma:max_principle}
	Let $c\in C([-h,0])$ with $\sup c<\pi^2/(4h^2)$, $u\in C^2((-h,0))\cap C([-h,0])$, and $Lu\coloneqq-u''-cu$. We have:
	\begin{enumerate}[label=(\roman*)]
		\item If $Lu\le0$ on $(-h,0)$, $u(-h)=0$, and $u(0)>0$, then $u'(0)>0$.
		\item If $Lu\le0$ on $(-h,0)$, $u(-h)=0$, and $u(0)\le0$, then $u\le0$ on $[-h,0]$.
		\item If $Lu\ge0$ on $(-h,0)$, $u(0)=0$, and $u'(0)>0$, then $u<0$ on $[-h,0)$.
	\end{enumerate}
\end{lemma}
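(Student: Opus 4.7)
The plan is to build a strictly positive auxiliary function $\varphi\in C^2([-h,0])$ satisfying $L\varphi=0$, and then to reduce each of the three assertions to elementary first-order monotonicity arguments for the ratio $v:=u/\varphi$. I would obtain $\varphi$ as the solution of the initial value problem $L\varphi=0$, $\varphi(-h)=1$, $\varphi'(-h)=0$. The crucial point is the strict positivity of $\varphi$ on the \emph{closed} interval $[-h,0]$: if $\varphi$ had a first zero at some $y_0\in(-h,0]$, testing the ODE against $\varphi$ on $[-h,y_0]$ would give $\int_{-h}^{y_0}(\varphi')^2\,dy=\int_{-h}^{y_0}c\varphi^2\,dy$ (the boundary contributions vanish thanks to $\varphi'(-h)=0$ and $\varphi(y_0)=0$); combined with the sharp Neumann--Dirichlet Wirtinger inequality $\int\varphi^2\,dy\le (4(y_0+h)^2/\pi^2)\int(\varphi')^2\,dy$, this would force $\sup c\ge\pi^2/(4(y_0+h)^2)\ge\pi^2/(4h^2)$, contradicting the hypothesis. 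This step is the main technical obstacle and is precisely where the sharp constant $\pi^2/(4h^2)$ enters.

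A direct computation yields the key identity $\varphi\,Lu=-(\varphi^2v')'+\varphi v\,L\varphi$, which with $L\varphi=0$ becomes $(\varphi^2v')'=-\varphi\,Lu$. For (ii) this gives $(\varphi^2v')'\ge 0$, so $\varphi^2v'$ is non-decreasing on $(-h,0)$, while the boundary hypotheses translate to $v(-h)=0$ and $v(0)\le 0$. If $v$ were positive somewhere, it would attain an interior positive maximum at some $y_0\in(-h,0)$ with $v'(y_0)=0$, and the monotonicity of $\varphi^2v'$ would force $\varphi^2v'\le 0$ on $[-h,y_0]$, hence $v(y_0)\le v(-h)=0$, a contradiction; so $v\le 0$ and (ii) follows.

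For (i), the plan is to deduce the Hopf-type conclusion from (ii) via the explicit barrier $\varphi_1(y)=\sin(\kappa(y+h))$ with $\kappa\in[\sqrt{\sup c},\pi/(2h))$---an interval that is nonempty \emph{precisely} because $\sup c<\pi^2/(4h^2)$---which satisfies $L\varphi_1\ge 0$, $\varphi_1(-h)=0$, $\varphi_1>0$ on $(-h,0]$, and crucially $\varphi_1'(0)=\kappa\cos(\kappa h)>0$. Setting $w:=u-(u(0)/\varphi_1(0))\varphi_1$, all hypotheses of (ii) are met, so $w\le 0$ on $[-h,0]$; taking the difference quotient at $y=0$ (where $w$ attains its maximum $w(0)=0$) then yields $u'(0)\ge u(0)\varphi_1'(0)/\varphi_1(0)>0$, with $u'(0)$ interpreted as a one-sided Dini derivative if $u$ is not classically $C^1$ up to $0$. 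For (iii), the same substitution $v=u/\varphi$ combined with $Lu\ge 0$ gives $(\varphi^2v')'\le 0$, so $\varphi^2v'$ is non-increasing; since $v(0)=0$ and $v'(0)=u'(0)/\varphi(0)>0$, monotonicity forces $\varphi^2v'(y)\ge\varphi(0)u'(0)>0$ throughout $[-h,0]$, whence $v$ is strictly increasing and $v(0)=0$ forces $v<0$, equivalently $u=\varphi v<0$, on $[-h,0)$.
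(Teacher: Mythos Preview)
Your proof is correct and follows the same general philosophy as the paper---transform via a positive auxiliary function to kill the sign-indefinite zeroth-order term---but the execution differs in interesting ways. The paper uses an \emph{explicit} comparison function $p(y)=\cos(r(y-y_0))$ with $\sup c<r^2<\pi^2/(4(h+y_0)^2)$, satisfying $-p''-r^2p=0$; dividing by $p$ produces an operator $Mv=-v''-(2p'/p)v'+(r^2-c)v$ with nonnegative zeroth-order coefficient, so the classical weak and strong maximum principles apply directly, and since $p'>0$ on $[-h,0]$ one gets (i) immediately from $u'(0)=v'(0)p(0)+v(0)p'(0)$ without a second barrier. You instead solve the full equation $L\varphi=0$ as an initial value problem, obtain the clean divergence identity $(\varphi^2 v')'=-\varphi\,Lu$ with no zeroth-order term at all, and argue by monotonicity of $\varphi^2 v'$; positivity of $\varphi$ is won via the sharp Neumann--Dirichlet Wirtinger inequality, and part (i) needs the separate barrier $\varphi_1=\sin(\kappa(y+h))$ combined with (ii). Both routes pinpoint the constant $\pi^2/(4h^2)$ in the same way. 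The paper's version is shorter for (i); yours is arguably more transparent for (ii) and (iii) since it bypasses the maximum principle machinery entirely.

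One small remark: in your construction of $\varphi_1$ for (i), writing $\kappa\in[\sqrt{\sup c},\pi/(2h))$ tacitly assumes $\sup c\ge 0$. When $\sup c<0$ (which is allowed, and occurs in the applications), simply take any $\kappa\in(0,\pi/(2h))$; then $L\varphi_1=(\kappa^2-c)\varphi_1\ge 0$ still holds and the rest of the argument goes through unchanged.
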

\begin{proof}
	Let $y_0>0$ be sufficiently small so that $\sup c < \frac{\pi^2}{4(h+y_0)^2}$  and choose  $r>0$ so that
	\[
	\sup c < r^2< \frac{\pi^2}{4(h+y_0)^2}.
	\]
	Then the function $p(y)\coloneqq\cos (r(y-y_0))$ satisfies $p(y)>0$ and $p'(y)>0$ on $[-h,0]$, and
	\[
	-p''-r^2 p=0.
	\]
	Moreover, an easy computation shows that $v\coloneqq u/p$ satisfies
	\[Mv\coloneqq-v''-\frac{2p'}{p}v'+(r^2-c)v=\frac{Lu}{p}\quad\mathrm{on\ }(-h,0).\]
	Now, to this equation we can apply standard maximum principles as $r^2-c\ge0$.
	
	In the situation of (i), we have $Mv\le0$ on $(-h,0)$, $v(-h)=0$, and $v(0)>0$. By the weak maximum principle, it follows that $v(0)=\max v$. Therefore, $v'(0)\ge0$ and consequently $u'(0)=v'(0)p(0)+v(0)p'(0)>0$.
	
	In the situation of (ii), we have $Mv\le0$ on $(-h,0)$, $v(-h)=0$, and $v(0)\le0$. By the weak maximum principle, it follows that $v\le0$ and thus $u\le0$ on $[-h,0]$.
	
	Finally, in the situation of (iii), we have $Mv\ge0$ on $(-h,0)$, $v(0)=0$, and $v'(0)=u'(0)/p(0)>0$. By the strong maximum principle, it follows that $v$ cannot have an interior minimum point on any $(a,0)$, $-h\le a<0$. Hence, $v$ is monotonically increasing on $[-h,0]$ because of $v'(0)>0$. In particular, $v<v(0)=0$ and thus $u<0$ on $[-h,0)$.
\end{proof}
Now let us assume that $\sup \gamma' < \pi^2/(4h^2)$, and moreover $\mu\le 0$. Observe that $\beta=\beta^{\mu,\lambda}$ satisfies
\[
-\beta''-(\gamma'(\psi^\lambda)+\mu)\beta=0, \quad \beta(-h)=0, \quad \beta(0)=1.
\]
By Lemma \ref{lma:max_principle}(i),(ii), it follows that $\beta'(0)>0$ and $\beta\ge0$ on $[-h,0]$.
Moreover, since
\[
-(\partial_\lambda \psi^\lambda)''-\gamma'(\psi^\lambda)\partial_\lambda \psi^\lambda=0,\quad \partial_\lambda \psi^\lambda(0)=0, \quad (\partial_\lambda \psi^\lambda)'(0)=1,
\]
we find that $\partial_\lambda \psi^\lambda<0$ on $[-h, 0)$ applying Lemma \ref{lma:max_principle}(iii).
Finally, note that $\partial_\lambda \beta$ satisfies 
\[
-\partial_\lambda \beta''-(\gamma'(\psi^\lambda)+\mu)\partial_\lambda \beta=\gamma''(\psi^\lambda)\partial_\lambda \psi^\lambda \beta, \quad 
\partial_\lambda \beta(-h)=0, \quad \partial_\lambda \beta(0)=0.
\]
Assuming that $\gamma''\ge 0$, the right-hand side is non-positive and we can apply Lemma \ref{lma:max_principle}(ii) to conclude that $\partial_\lambda \beta\le 0$ on $[-h, 0]$ and therefore $(\partial_\lambda \beta)'(0) \ge 0$. 
Next note that
\[
\gamma(0)=-\lambda \beta'(0)+\lambda^{-1}g
\]
if $d(\mu,\lambda)=0$ holds. Substituting this into the transversality condition, we get
\begin{align}\label{eq:d_lambda_sign}
	d_\lambda(\mu,\lambda)=\partial_\lambda \beta'(0) +2\lambda^{-3}g-\lambda^{-2} \gamma(0)=\partial_\lambda \beta'(0) +\lambda^{-1}\beta'(0)+\lambda^{-3}g>0
\end{align}
if $\lambda>0$. On the other hand, if $\gamma'' \le 0$ and $\lambda<0$, we get by the same argument that $d_\lambda(\mu,\lambda)<0$.
In the special case of affine vorticity, we have $\gamma''\equiv 0$, and so the argument works irrespective of the sign of $\lambda$.

To summarize, we have in fact proved Proposition \ref{prop:transversality_redundant}.

It should be noted that the condition $\sup\gamma'< \pi^2/(4h^2)$ still allows for waves with critical layers. If we, for example, take 
\[
\gamma(\psi)=a\psi+b
\]
with $a<0$ and $b>0$, then
\[
\psi_y^\lambda(y)=\lambda\cosh(\sqrt{-a}y)-\frac{b}{\sqrt{-a}}\sinh(\sqrt{-a}y),
\]
which changes sign if 
\[
-\frac{b}{\sqrt{-a}} \tanh(\sqrt{-a}h)<\lambda<0.
\]
Finally, we remark that the above result can be easily extended to the case with surface tension.

\subsubsection{Examples}We further illustrate our results with certain examples for which the term $\beta^{-(k\nu)^2,\lambda}_y(0)$ can be computed explicitly, namely, constant and affine vorticity. Notice that, as we just saw, we do not have to care about the transversality condition in these cases.
\subsubsection*{Constant vorticity}
First we suppose that $\gamma$ is a constant. Then,
\[\psi^\lambda(y)=-\frac\gamma2 y^2+\lambda y,\quad m(\lambda)=\frac{\gamma h^2}{2}+\lambda h.\]
Clearly, $-(k\nu)^2$ is not in the Dirichlet spectrum of $-\partial_y^2$ on $[-h,0]$ for any $k\ge 0$, and it holds that
\[\beta^{-(k\nu)^2,\lambda}(y)=\frac{\sinh(k\nu(y+h))}{\sinh(k\nu h)}.\]
Thus, we have
\[d(-(k\nu)^2,\lambda)=k\nu\coth(k\nu h)+\lambda^{-1}\gamma-\lambda^{-2}g,\]
and $d(-(k\nu)^2,\lambda)=0$ can equivalently be written as
\begin{align*}
	\lambda&=\frac{-\gamma\pm\sqrt{\gamma^2+4gl\coth(lh)}}{2l\coth(lh)}\\
	&=-\frac{\gamma\tanh(lh)}{2l}\pm\sqrt{\frac gl\tanh(lh)+\frac{\gamma^2}{4l^2}\tanh^2(lh)}\eqqcolon\lambda^\pm(l)
\end{align*}
with $l=k\nu$, a formula which can also be found in \cite{Varholm20} and (for $m=m(\lambda)$ instead of $\lambda$) in \cite{ConsStrVarv16}.
Moreover, it is clear that, for given $\lambda\ne 0$, the dispersion relation can have at most one solution $k\in\N$ since the function $x\mapsto x\coth x$ is injective on $(0,\infty)$ (or in view of Proposition \ref{prop:sol_disprel_bound_gamma'}).

For a further, very detailed investigation of the constant vorticity case we refer to \cite{ConsStrVarv16}.

\subsubsection*{Affine vorticity}
If the vorticity is affine, that is, $\gamma(\psi)=a\psi+b$ with $a\ne 0$, then the trivial solutions are given by
\[\psi^\lambda(y)=\begin{cases}\frac{\lambda}{\sqrt{a}}\sin(\sqrt{a}y)+\frac{b}{a}(\cos(\sqrt{a}y)-1),&a>0,\\\frac{\lambda}{\sqrt{-a}}\sinh(\sqrt{-a}y)+\frac{b}{a}(\cosh(\sqrt{-a}y)-1),&a<0.\end{cases}\]
In general, these trivial solutions can have arbitrarily many critical layers if $a>0$. A short computation shows that $-(k\nu)^2$ is in the Dirichlet spectrum of $-\partial_y^2-a$ on $[-h,0]$ if and only if
\begin{align}\label{eq:affine_vort_Dir_spec}
	\exists m\in\N:a-(k\nu)^2=\frac{\pi^2}{h^2}m^2.
\end{align}
To compute $\beta^{-(k\nu)^2,\lambda}$ and $d(-(k\nu)^2,\lambda)$, we have to distinguish three cases:
\begin{enumerate}[label=\arabic*),leftmargin=*]
	\item $a-(k\nu)^2>0$, which is obviously the only case where \eqref{eq:affine_vort_Dir_spec} can occur: We have
	\[\beta^{-(k\nu)^2,\lambda}(y)=\frac{\sin(\sqrt{a-(k\nu)^2}(y+h))}{\sin(\sqrt{a-(k\nu)^2}h)},\]
	provided \eqref{eq:affine_vort_Dir_spec} fails to hold, and thus
	\[d(-(k\nu)^2,\lambda)=\sqrt{a-(k\nu)^2}\cot(\sqrt{a-(k\nu)^2}h)+\lambda^{-1}b-\lambda^{-2}g.\]
	\item $a-(k\nu)^2=0$: Here we have
	\[\beta^{-(k\nu)^2,\lambda}(y)=\frac{y+h}{h}\]
	and thus
	\[d(-(k\nu)^2,\lambda)=h^{-1}+\lambda^{-1}b-\lambda^{-2}g.\]
	\item $a-(k\nu)^2<0$: Here we have
	\[\beta^{-(k\nu)^2,\lambda}(y)=\frac{\sinh(\sqrt{(k\nu)^2-a}(y+h))}{\sinh(\sqrt{(k\nu)^2-a}h)}\]
	and thus
	\[d(-(k\nu)^2,\lambda)=\sqrt{(k\nu)^2-a}\coth(\sqrt{(k\nu)^2-a}h)+\lambda^{-1}b-\lambda^{-2}g.\]
\end{enumerate}
Moreover, by Proposition \ref{prop:sol_disprel_bound_gamma'}, it is clear that the dispersion relation has at most one solution $k\in\N$ for given $\lambda\ne0$ provided $a\le\pi^2/h^2$. The somewhat richer and more difficult case is $a>\pi^2/h^2$. We refer to \cite{AasVar18,EhrnEschVill12,EhrnEschWahl11,EhrnWahl15} for more details in this situation.

\section{Abstract global bifurcation theorems}\label{appx:GlobalBif}
The global bifurcation theorem by Rabinowitz that we use in Theorem \ref{thm:GlobalBifurcation} is:
\begin{theorem}\label{thm:Rabinowitz}
	Let $X$ be a Banach space, $\mathcal U\subset\R\times X$ open with $(\lambda,0)\in \mathcal U$ for any $\lambda\in\R\setminus\{0\}$, and $F\in C(\mathcal U;X)$. Assume that $F$ admits the form $F(\lambda,x)=x+f(\lambda,x)$ with $f$ compact, and that $F_x(\cdot,0)\in C(\R\setminus\{0\};L(X,X))$. Moreover, suppose that $F(\lambda_0,0)=0$ for some $\lambda_0\in\R\setminus\{0\}$ and that $F_x(\lambda,0)$ has an odd crossing number at $\lambda=\lambda_0$. Let $\mathfrak S$ denote the closure of the set of nontrivial solutions of $F(\lambda,x)=0$ in $\R\times X$ and $\Cc$ denote the connected component of $\mathfrak S$ to which $(\lambda_0,0)$ belongs. Then one of the following alternatives occurs:
	\begin{enumerate}[label=(\roman*)]
		\item $\Cc$ is unbounded;
		\item $\Cc$ contains a point $(\bar\lambda,0)$ with $\bar\lambda\neq\lambda_0$;
		\item $\Cc$ contains a point on the boundary of $\mathcal U$.
	\end{enumerate}
\end{theorem}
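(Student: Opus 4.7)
The plan is to argue by contradiction using the Leray--Schauder degree, following Rabinowitz's classical strategy. Assume that none of (i), (ii), (iii) holds: $\mathcal C$ is bounded, disjoint from $T\coloneqq\{(\lambda,0):\lambda\ne\lambda_0\}$, and stays away from $\partial\mathcal U$. Because $F=I+f$ with $f$ compact, any bounded sequence in $\mathfrak S$ admits a convergent subsequence via $x_n=-f(\lambda_n,x_n)$; this local compactness of $\mathfrak S$ together with the three contradiction hypotheses forces $\mathcal C$ (a closed, bounded connected component of $\mathfrak S$ with $\overline{\mathcal C}\subset\mathcal U$) to be compact.

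The key geometric step will be a Whyburn-type separation producing a bounded connected open neighborhood $\mathcal O\subset\mathcal U$ of $\mathcal C$ with $\overline{\mathcal O}\subset\mathcal U$, such that $F(\lambda,x)\ne0$ on $\partial\mathcal O$ and $\overline{\mathcal O}$ meets $T$ only in an arbitrarily short interval around $\lambda_0$. To build $\mathcal O$, I will consider the compact set $\mathfrak K\coloneqq\mathfrak S\cap\overline{\mathcal V}$, where $\mathcal V$ is a bounded connected open neighborhood of $\mathcal C$ with $\overline{\mathcal V}\subset\mathcal U$ and $\overline{\mathcal V}\cap T$ confined to a prescribed small $\lambda_0$-interval (possible since $\mathcal C$ is compact and disjoint from $T$). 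As $\mathcal C$ is a connected component of $\mathfrak K$ disjoint from the closed set $B\coloneqq(\mathfrak K\cap T)\cup(\mathfrak K\cap\partial\mathcal V)$, Whyburn's lemma gives a relatively clopen decomposition $\mathfrak K=\mathfrak K_0\sqcup\mathfrak K_1$ with $\mathcal C\subset\mathfrak K_0$ and $B\subset\mathfrak K_1$. Thickening $\mathfrak K_0$ by an $\varepsilon/3$-tube in $\R\times X$, with $\varepsilon$ the positive distance between the compact sets $\mathfrak K_0$ and $\mathfrak K_1\cup\partial\mathcal V$, produces the desired $\mathcal O$.

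With $\mathcal O$ fixed, the contradiction comes from degree theory on slices. For each $\lambda$ set $\mathcal O_\lambda\coloneqq\{x:(\lambda,x)\in\mathcal O\}$ and $d(\lambda)\coloneqq\deg(F(\lambda,\cdot),\mathcal O_\lambda,0)$, which is well-defined since $\partial\mathcal O_\lambda\subset\partial\mathcal O$ contains no zeros of $F(\lambda,\cdot)$. By the generalized homotopy invariance of the Leray--Schauder degree (continuity of $F$ and compactness of $f$), $d(\lambda)$ is constant on each connected component of the $\lambda$-projection of $\mathcal O$. Choose $\lambda^-<\lambda_0<\lambda^+$ close enough to $\lambda_0$ that both lie in the short $\lambda$-interval where $\mathcal O$ meets $T$; since they lie in the same connected component of the projection (both are close to $\lambda_0$, which lies in the projection because $(\lambda_0,0)\in\mathcal C\subset\mathcal O$), $d(\lambda^-)=d(\lambda^+)$. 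By construction of $\mathcal O$, the only zero of $F(\lambda^\pm,\cdot)$ in $\mathcal O_{\lambda^\pm}$ is $0$, so excision gives $d(\lambda^\pm)=\deg(F(\lambda^\pm,\cdot),B_r(0),0)$ for sufficiently small $r>0$. But the odd crossing number hypothesis at $\lambda_0$ forces these two ball degrees to differ once $|\lambda^\pm-\lambda_0|$ is small enough, a contradiction.

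The main obstacle is the Whyburn separation: I must verify that $\mathcal C$ is genuinely a connected component of $\mathfrak K$ (not merely a connected subset), which follows because $\mathcal C$ is already a component of the larger set $\mathfrak S$ and $\mathcal C\subset\mathcal V$ keeps $\mathcal C$ clear of $\partial\mathcal V$. A subsidiary point is ensuring that $F(\lambda^\pm,\cdot)$ has no nontrivial zero in $\mathcal O_{\lambda^\pm}$, which follows from $\mathfrak K_0\cap T\subset\{(\lambda_0,0)\}$ (by our choice of $\mathcal V$) together with the fact that nontrivial solutions in $\mathcal O$ connected to $\mathcal C$ necessarily belong to $\mathcal C$. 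All remaining ingredients---the construction of the Leray--Schauder degree for compact perturbations of the identity, excision, the generalized homotopy invariance, and the identification of $\deg(F(\lambda,\cdot),B_r(0),0)$ with the index of the linearization $F_x(\lambda,0)$ underlying the notion of odd crossing number---are standard.
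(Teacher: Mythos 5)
The paper offers no proof of its own here: it simply cites \cite[Theorem II.3.3]{Kielhoefer} and remarks that the argument there carries over verbatim to a general open set $\mathcal U$. Your skeleton --- compactness of $\mathcal C$ from $f$ being compact, the Whyburn separation of $\mathfrak K=\mathfrak S\cap\overline{\mathcal V}$ into $\mathfrak K_0\sqcup\mathfrak K_1$, and an $\varepsilon$-thickening $\mathcal O$ of $\mathfrak K_0$ with $\partial\mathcal O\cap\mathfrak S=\emptyset$ and $\overline{\mathcal O}$ meeting the trivial line only near $\lambda_0$ --- is exactly the classical construction, and that part is fine (one small slip: $\partial\mathcal O\cap\mathfrak S=\emptyset$ does \emph{not} mean $\partial\mathcal O$ carries no zeros of $F$, since trivial zeros $(\lambda,0)$ with $\lambda$ in the short interval need not lie in $\mathfrak S$ and may sit on $\partial\mathcal O$; so $d(\lambda)$ is not automatically well defined for every $\lambda$ in the projection, and the standard remedy is to enlarge $\mathcal O$ by a thin tube around the trivial segment or to restrict to suitable $\lambda$).

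The genuine gap is in the final degree bookkeeping. Your contradiction rests on the claim that the only zero of $F(\lambda^\pm,\cdot)$ in $\mathcal O_{\lambda^\pm}$ is $x=0$, so that excision reduces $d(\lambda^\pm)$ to the ball degree, whose jump under the odd crossing number gives the contradiction. That claim is false in general, and your justification (``nontrivial solutions in $\mathcal O$ connected to $\mathcal C$ belong to $\mathcal C$'' and $\mathfrak K_0\cap T\subset\{(\lambda_0,0)\}$) does not imply it: $\mathcal C\subset\mathcal O$ itself typically contains nontrivial solutions with $\lambda$ arbitrarily close to $\lambda_0$ --- that is precisely what bifurcation at $(\lambda_0,0)$ means, and in the present paper the local Crandall--Rabinowitz curve lies in $\mathcal C$ --- so $\mathcal O_{\lambda^\pm}$ does contain nontrivial zeros and their contribution to $d(\lambda^\pm)$ cannot be discarded. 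A telling sanity check: after constructing $\mathcal O$ you never again use boundedness of $\mathcal C$ or the failure of the other alternatives, so if your purely local argument were valid it would show that an odd crossing number can never occur at all, which is absurd. The classical proof avoids this by going global: it first shows $d(\lambda)=0$ for $\lambda$ beyond the small interval, by the generalized homotopy invariance out to parameter values where the slices $\mathcal O_\lambda$ are empty (this is exactly where boundedness of $\mathcal C$ and the absence of boundary/trivial intersections enter), and then uses additivity of the degree, splitting off a small ball about the trivial solution and carefully tracking the remaining nontrivial zeros across $[\lambda_0-\delta,\lambda_0+\delta]$, so that the sign change of the index of $F_x(\lambda,0)$ forced by the odd crossing number contradicts $d\equiv0$ outside. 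This ``degree vanishes far away plus additivity'' step is the missing idea; without it, $d(\lambda^-)=d(\lambda^+)$ together with the index jump yields no contradiction.
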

The proof of this theorem in the case $\mathcal U=X$ can be found in \cite[Theorem II.3.3]{Kielhoefer} and is practically identical to the proof for general $\mathcal U$.

Moreover, in Theorem \ref{thm:GlobalBifurcation_Analytic} below, we use the following analytic global bifurcation theorem (which includes the local, analytic Crandall--Rabinowitz theorem) in the spirit of \cite{BuffTol03,Dancer73} and presented in the version of \cite{ConsStrVarv16}.
\begin{theorem}\label{thm:analytic_globbi}
	Let $X$, $Y$ be Banach spaces, $\mathcal U\subset\R\times X$ open, and $F\colon \mathcal U\to Y$ a real-analytic function. Suppose that
	\begin{enumerate}[label=($H_\arabic*$)]
		\item $(\lambda,0)\in \mathcal U$ and $F(\lambda,0)=0$ for all $\lambda\in\R\setminus\{0\}$;
		\item for some $\lambda_0\in\R\setminus\{0\}$, $F_x(\lambda_0,0)$ is a Fredholm operator with index zero and one-dimensional kernel spanned by $x_0\in X$, and the transversality condition $F_{\lambda x}(\lambda_0,0)x_0$ $\notin\im F_x(\lambda_0,0)$ holds;
		\item $F_x(\lambda,x)$ is a Fredholm operator of index zero for any $(\lambda,x)\in \mathcal U$ such that $F(\lambda,x)=0$;
		\item for some sequence $(\mathcal Q_j)$ of bounded, closed subsets of $\mathcal U$ with $\mathcal U=\bigcup_{j\in\N}\mathcal Q_j$, the set $\{(\lambda,x)\in \mathcal U:F(\lambda,x)=0\}\cap\mathcal Q_j$ is compact for each $j\in\N$.
	\end{enumerate}
	Then there exists in $\mathcal U$ a continuous curve $\Cc_a=\{(\lambda^s,x^s):s\in\R\}$ of solutions $F(\lambda,x)=0$ such that:
	\begin{enumerate}[label=($C_\arabic*$)]
		\item $(\lambda^0,x^0)=(\lambda_0,0)$;
		\item $x^s=sx_0+o(s)$ in $X$ as $s\to0$;
		\item all solutions of $F(\lambda,x)=0$ in a neighborhood of $(\lambda_0,0)$ are on this curve or are trivial;
		\item $\Cc_a$ has a real-analytic reparametrization locally around each of its points;
		\item one of the following alternatives occur:
		\begin{enumerate}[label=(\roman*)]
			\item for every $j\in\N$, there exists $s_j>0$ such that $(\lambda^s,x^s)\notin\mathcal Q_j$ for all $s\in\R$ with $|s|>s_j$;
			\item there exists $T>0$ such that $(\lambda^{s+T},x^{s+T})=(\lambda^s,x^s)$ for all $s\in\R$.
		\end{enumerate}
	\end{enumerate}
	Moreover, such a curve of solutions to $F(\lambda,x)=0$ having the properties ($C_1$)--($C_5$) is unique (up to reparametrization).
\end{theorem}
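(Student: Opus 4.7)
The plan is to follow the now-classical Dancer--Buffoni--Toland strategy for analytic global bifurcation, adapted to the setting where the domain is a general open set $\mathcal U$ rather than all of $\R\times X$. The idea is to construct the curve $\Cc_a$ by iteratively extending an initial local analytic arc through regular points via the analytic implicit function theorem, and through singular points via a Lyapunov--Schmidt reduction combined with the Weierstrass preparation theorem, and then to use the compactness furnished by (H4) to establish the global alternative ($C_5$).

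First I would produce the local piece of $\Cc_a$ near $(\lambda_0,0)$. Under (H2) the hypotheses of Crandall--Rabinowitz hold, which in its analytic formulation (Lyapunov--Schmidt reduction to a scalar equation $g(\lambda,s)=0$ with $g(\lambda,0)\equiv 0$ and $g_s(\lambda_0,0)=0$, $g_{\lambda s}(\lambda_0,0)\ne 0$, followed by division by $s$ and the analytic implicit function theorem) produces an analytic arc through $(\lambda_0,0)$ satisfying ($C_1$)--($C_4$) and a uniqueness statement on a neighborhood. This gives the germ of $\Cc_a$.

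Next I would extend the arc globally. Call a point $(\lambda,x)$ on a solution an \emph{regular point} if $F_x(\lambda,x)$ is invertible, and a \emph{singular point} otherwise; by (H3), singular points are those where the (finite-dimensional) kernel of $F_x$ is nonzero. At a regular point the analytic implicit function theorem locally parametrizes the zero set of $F$ as an analytic curve $\lambda\mapsto x(\lambda)$, so the arc extends uniquely and analytically. The harder step is handling singular points. Here I would apply a Lyapunov--Schmidt reduction: project onto the finite-dimensional kernel and cokernel of $F_x$ at the singular point, use the analytic implicit function theorem on the complement, and reduce to a finite system of real-analytic equations in finitely many variables. Then, using the Weierstrass preparation theorem (or Puiseux expansions in one parameter after a suitable choice of coordinate), one shows that the zero set of $F$ near the singular point is a finite union of analytic arcs meeting at the point. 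A parity/degree argument (the topological degree picks out the correct branch, and on a generic slice only finitely many branches occur) selects a unique continuation of the incoming arc. This is the step I expect to be the main technical obstacle: making precise the choice of outgoing branch, showing that no two incoming arcs are ever identified and that reparametrization yields a genuine $C^0$ curve with local analytic structure.

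Having constructed a maximal analytic curve $\Cc_a=\{(\lambda^s,x^s):s\in\R\}$, I would finally invoke (H4) to establish alternative ($C_5$). Consider the closure of the image of $\Cc_a$ intersected with a bounded, closed set $\mathcal Q_j\subset\mathcal U$. By (H4) this intersection is compact. If for every $j$ there exists $s_j$ such that $(\lambda^s,x^s)\notin\mathcal Q_j$ for $|s|>s_j$, alternative ($C_5$)(i) holds. Otherwise, some $\mathcal Q_j$ is visited by $(\lambda^s,x^s)$ on an unbounded sequence $s_n\to\pm\infty$, and compactness yields an accumulation point $(\lambda^*,x^*)$ which is itself a solution in $\mathcal U$. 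Using the local parametrization (analytic at regular points, finitely many branches at singular points), a standard argument shows that the curve must then re-enter a previously constructed piece; exploiting the uniqueness in ($C_3$) and connectedness, one deduces that $\Cc_a$ is periodic, giving ($C_5$)(ii). Uniqueness of $\Cc_a$ up to reparametrization follows from the local uniqueness assertion ($C_3$) together with the just-described branch-selection procedure at singular points, propagated along $\Cc_a$ by the standard continuation argument.
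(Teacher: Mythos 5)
The paper does not prove this theorem at all: it is quoted from the literature (``in the spirit of \cite{BuffTol03,Dancer73} and presented in the version of \cite{ConsStrVarv16}''), with only the remark that restricting the trivial solutions to $\lambda\ne0$ and allowing $(0,0)\in\partial\mathcal U$ clearly does not affect the cited proofs. Your attempt is therefore a from-scratch sketch of the Dancer--Buffoni--Toland argument, and while the overall architecture you describe (local analytic Crandall--Rabinowitz via Lyapunov--Schmidt, continuation through regular points by the analytic implicit function theorem, continuation through singular points via a finite-dimensional reduction, and the dichotomy ($C_5$) from the compactness in ($H_4$)) is indeed the right one, the central step is missing, as you yourself flag.

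Concretely, the branch selection at singular points cannot be done by ``a parity/degree argument'': here $F$ maps $\mathcal U\subset\R\times X$ into a possibly different space $Y$ and is merely Fredholm of index zero along the solution set, so no Leray--Schauder (or other canonical) degree is available --- degree theory is exactly what powers the Rabinowitz-type Theorem \ref{thm:Rabinowitz}, which requires the ``identity plus compact'' structure, and it plays no role in the analytic theorem. In \cite{Dancer73,BuffTol03} the continuation is instead obtained from the structure theory of finite-dimensional real-analytic varieties: after Lyapunov--Schmidt the zero set is locally the zero set of a real-analytic map between finite-dimensional spaces (note that at a later singular point the kernel may have dimension larger than one, so one-variable Weierstrass preparation or a single Puiseux expansion does not suffice), the curve is assembled from distinguished arcs/routes through this variety, and the maximality of the continuation, the alternative ($C_5$) (in particular that recurrence into some $\mathcal Q_j$ forces a closed loop), and the final uniqueness up to reparametrization are all established through that machinery rather than from ($C_3$) and connectedness alone. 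As written, your proposal is an outline whose key technical content is deferred; for the purposes of this paper the appropriate ``proof'' is the citation of \cite{BuffTol03,Dancer73,ConsStrVarv16} together with the observation that the modification concerning the trivial solutions and $\partial\mathcal U$ is harmless.
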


\section{Real-analytic vorticity functions}\label{appx:GlobalBifurcation_Analytic}

Stronger results for the solution set $\Cc$ can be derived when more assumptions on the nonlinear operator are made. In particular, if the operator is real-analytic, then one is in the regime of analytic global bifurcation and can deduce, for example, regularity properties of the solution set.
Clearly, our operator $F\colon\Om\to X$ is real-analytic provided $\gamma$ is real-analytic. It is not hard to see that the analytic global bifurcation theorem stated as Theorem \ref{thm:analytic_globbi} above is then applicable in our setting (with $X=Y$ and $\mathcal U=\Om$): While the first two conditions there (($H_1$) and ($H_2$)) are just the assumptions for the local Crandall--Rabinowitz theorem (the second of) which we impose anyway in Theorem \ref{thm:LocalBifurcation}, the other two conditions (($H_3$) and ($H_4$)) are satisfied simply by virtue of our formulation as \enquote{identity plus compact}. Indeed, it is clear that the linearized operator $F_{(q,w,\phi)}$ is Fredholm with index zero -- even at any point, not only at solutions as needed. Moreover, in order to find bounded, closed sets $\mathcal Q_j\subset\Om$, $j\in\N$, that exhaust $\Om$ such that their intersection with the solution set is compact for any $j\in\N$, one can simply define
\[\mathcal Q_j\coloneqq\Om_{1/j}\cap\{(\lambda,q,w,\phi)\in\Om:|\lambda|+\|(q,w,\phi)\|_X\le j\},\]
with $\Om_{1/j}$ according to \eqref{eq:def_Omeps}. Therefore, combining this with the same arguments as used in the proof of Theorem \ref{thm:GlobalBifurcation}, we conclude the following.
\begin{theorem}\label{thm:GlobalBifurcation_Analytic}
	Suppose that $\gamma$ is real-analytic. Moreover, assume that there exists $\lambda_0\neq 0$ such that \eqref{ass:SL-spectrum} holds for $\lambda=\lambda_0$ and such that the dispersion relation
	\[d(-(k\nu)^2,\lambda_0)=0,\]
	with $d$ given by \eqref{eq:d(k,lambda)}, has exactly one solution $k_0\in\N$, and assume that the transversality condition
	\[d_\lambda(-(k_0\nu)^2,\lambda_0)\neq 0\]
	holds. Then there exists in $\Om$ a continuous curve $\Cc_a=\{(\lambda^s,q^s,w^s,\phi^s):s\in\R\}$ of solutions $F(\lambda,q,w,\phi)=0$ such that (for the interpretations we refer back to Theorem \ref{thm:GlobalBifurcation})
	\begin{enumerate}[label=(\greek*)]
		\item $(\lambda^0,q^0,w^0,\phi^0)=(\lambda_0,0,0,0)$;
		\item $(q^s,w^s,\phi^s)=(0,s\T(\lambda_0)\theta)+o(s)$ in $\R\times C_{0,\per,\e}^{2,\alpha}(\R)\times C_{\per,\e}^{2,\alpha}(\overline{\Omega_h})$ as $s\to0$, with $\T(\lambda_0)\theta$ given in \eqref{eq:CR_Ttheta};
		\item all solutions of $F(\lambda,q,w,\phi)=0$ in a neighborhood of $(\lambda_0,0,0,0)$ are on this curve or are trivial;
		\item $\Cc_a$ has a real-analytic reparametrization locally around each of its points;
		\item one of the following alternatives occurs:
		\begin{enumerate}[label=(\roman*)]
			\item for any $\delta\in(5/6,1]$, $p>1$,
			\begin{align*}
				\min\Big\{&\frac{1}{1+|\lambda^s|+\|w^s\|_{C_\per^{0,\delta}(\R)}+\|\gamma((\phi^s+\psi^{\lambda^s})\circ H[w^s+h]^{-1})\|_{L^p(\Omega_{w^s}^*)}},\\
				&q^s+(\lambda^s)^2/2-g\max_\R w^s,\min_\R\K(w^s)\Big\}\to0
				\end{align*}
			as $s\to\pm\infty$;
			\item there exists $T>0$ such that
			\[(\lambda^{s+T},q^{s+T},w^{s+T},\phi^{s+T})=(\lambda^s,q^s,w^s,\phi^s)\]
			for all $s\in\R$.
	\end{enumerate}
	\item all points in $\Cc_a$ satisfy \eqref{eq:additional_requirements} or there exists $s_*\in\R$ such that
	\begin{gather*}
		x\mapsto(x+(\Ch w^{s_*})(x),w^{s_*}(x)+h)\mathrm{\ is\ \textit{not}\ injective\ on\ }\R,\mathrm{\ or}\\
		\min_\R w^{s_*}=-h.
	\end{gather*}
	\end{enumerate}
	Moreover, such a curve of solutions to $F(\lambda,q,w,\phi)=0$ having the properties ($\alpha$)--($\varepsilon$) is unique (up to reparametrization).
\end{theorem}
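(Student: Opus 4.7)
The plan is to apply the abstract analytic global bifurcation theorem (Theorem \ref{thm:analytic_globbi}) to $F\colon\Om\to X$ with $Y=X$ and $\mathcal U=\Om$. Thanks to the identity-plus-compact structure established in Lemma \ref{lma:M_prop}, most hypotheses are essentially automatic; the main work is then to translate the abstract alternative $(C_5)$ into the quantitative form stated in $(\varepsilon)$, and to incorporate the supplementary alternative $(\zeta)$ regarding the physical admissibility of the configurations.

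First, I would verify that $F$ is real-analytic on $\Om$. All the algebraic/trigonometric operations, together with $\Ch$, $(\Ch)^{-1}$, $\Pro$, and $\partial_x^{-1}$, are real-analytic maps on the relevant Hölder spaces. The ODE $\psi^\lambda_{yy}=-\gamma(\psi^\lambda)$ depends real-analytically on $\lambda$ whenever $\gamma$ is real-analytic, by the standard analytic dependence theorem for ODEs. The operator $\A$ is then real-analytic in $(\lambda,w,\phi)$ since it is obtained by inverting a linear elliptic problem whose right-hand side depends analytically on its arguments. On $\Om$ the quantity $2q+\lambda^2-2gw$ stays strictly positive, so $R$ and $\ln R$ are analytic as well. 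Next, $(H_1)$ is obvious; $(H_2)$ is exactly the content of Lemmas \ref{lma:kernel} and \ref{lma:transversality_condition} under our hypotheses; $(H_3)$ holds at every point of $\Om$ (not just solutions) because $F_{(q,w,\phi)}=\mathrm{Id}-\M_{(q,w,\phi)}$ is a compact perturbation of the identity on each $\Om_\varepsilon$ by Lemma \ref{lma:M_prop}. For $(H_4)$, I would take
\[\mathcal Q_j\coloneqq\Om_{1/j}\cap\{(\lambda,q,w,\phi)\in\Om:|\lambda|+\|(q,w,\phi)\|_X\le j\},\]
so that $\Om=\bigcup_{j\in\N}\mathcal Q_j$. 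Each $\mathcal Q_j$ is closed and bounded in $\R\times X$, and compactness of the solution set inside $\mathcal Q_j$ follows at once from the compactness of $\M$ on $\Om_{1/j}$: any sequence $(\lambda_n,q_n,w_n,\phi_n)\in\mathcal Q_j$ with $F=0$ satisfies $(q_n,w_n,\phi_n)=(\M^1,\M^2,\M^3)(\lambda_n,q_n,w_n,\phi_n)$, and after extracting along a subsequence convergent $\lambda_n$ we obtain convergence of the right-hand side in $X$.

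With $(H_1)$--$(H_4)$ in hand, Theorem \ref{thm:analytic_globbi} produces a continuous curve $\Cc_a$ satisfying $(\alpha)$, $(\gamma)$, $(\delta)$, the uniqueness statement, and $(\beta)$ in the ambient space $\R\times X$. Upgrading $(\beta)$ to convergence in $\R\times C_{0,\per,\e}^{2,\alpha}(\R)\times C_{\per,\e}^{2,\alpha}(\overline{\Omega_h})$ is done exactly as at the end of the proof of Theorem \ref{thm:LocalBifurcation}: the remainder $o(s)/s$ can be rewritten as $\M$ applied at nearby points, and the second mapping property in Lemma \ref{lma:M_prop} (smoothing to $C^{2,\alpha}$) promotes $X$-convergence to the desired stronger convergence. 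The kernel element is identified via \eqref{eq:char_kernel} and the isomorphism $\T(\lambda_0)$ exactly as in the proof of Theorem \ref{thm:LocalBifurcation}, yielding the explicit form \eqref{eq:CR_Ttheta}.

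The main remaining task is to convert the abstract alternative $(C_5)(i)$—that the curve eventually leaves every $\mathcal Q_j$—into the quantitative statement $(\varepsilon)(i)$. This is achieved by recycling the a priori estimates from the proof of Theorem \ref{thm:GlobalBifurcation}: boundedness of $|\lambda^s|$, of $\|w^s\|_{C_\per^{0,\delta}(\R)}$ for some $\delta\in(5/6,1]$, of the vorticity in $L^p$ for some $p>1$, together with strict positivity of $q^s+(\lambda^s)^2/2-g\max_\R w^s$ and $\min_\R\K(w^s)$ (i.e., bounded distance to $\partial\Om$) yield via the Schauder/Calderón--Zygmund/interpolation chain a uniform bound in $\R\times X$; hence escaping every $\mathcal Q_j$ forces at least one of the quantities in the minimum of $(\varepsilon)(i)$ to tend to zero along $s\to\pm\infty$. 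Finally, statement $(\zeta)$ is handled as in Theorem \ref{thm:GlobalBifurcation}: by continuity of $s\mapsto(\lambda^s,q^s,w^s,\phi^s)$ in $X$, the open conditions \eqref{eq:additional_requirements} either persist along the entire curve or first fail at some parameter value $s_*$, where by continuity one obtains a genuine self-intersection of the profile or a contact point with the flat bed. The only delicate point is organizing these a priori bounds so that the minimum in $(\varepsilon)(i)$ is formed over quantities that are jointly controlled by the ones appearing in the bound; this is already done in the proof of Theorem \ref{thm:GlobalBifurcation} and is transcribed verbatim. Uniqueness of $\Cc_a$ up to reparametrization is inherited from Theorem \ref{thm:analytic_globbi}.
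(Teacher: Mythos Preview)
Your proposal is correct and follows essentially the same approach as the paper: apply Theorem \ref{thm:analytic_globbi} with $\mathcal U=\Om$ and the exhaustion $\mathcal Q_j=\Om_{1/j}\cap\{|\lambda|+\|(q,w,\phi)\|_X\le j\}$, verify $(H_1)$--$(H_4)$ via the identity-plus-compact structure of Lemma \ref{lma:M_prop} and the local analysis (Lemmas \ref{lma:kernel}, \ref{lma:transversality_condition}), and then translate the abstract alternative $(C_5)(i)$ into $(\varepsilon)(i)$ by reusing the a priori estimates from the proof of Theorem \ref{thm:GlobalBifurcation}. You supply more detail than the paper does (in particular for the analyticity of $F$, the compactness of solutions in $\mathcal Q_j$, the upgrade of $(\beta)$ to $C^{2,\alpha}$, and the continuity argument for $(\zeta)$), but the strategy is identical.
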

Let us remark here that while Theorem \ref{thm:GlobalBifurcation} was used as a starting point for the analysis in Sections \ref{sec:nodalanalysis} and \ref{sec:downstream}, it is also possible to begin with Theorem \ref{thm:GlobalBifurcation_Analytic} instead and then obtain similar conclusions in case $\gamma$ is real-analytic.

\section{On degeneracy of the conformal map}\label{appx:degeneracy_conformal}
As already advertised in Remark \ref{rem:GlobalBifurcation}(f), we also want to say more about alternative (iv) in Theorem \ref{thm:GlobalBifurcation}. We first have the following.
\begin{proposition}\label{prop:degeneracy1}
	In Theorem \ref{thm:GlobalBifurcation} the alternative (iv) can be replaced by
	\begin{enumerate}
		\item[(iv$'$)]
		\begin{enumerate}[label=(\alph*)]
			\item $\inf_{(\lambda,q,w,\phi)\in\Cc}\min_\R|\SL\phi_y+\lambda|=0$, or
			\item $\sup_{(\lambda,q,w,\phi)\in\Cc}q=\infty$ (that is, the Bernoulli constant is unbounded from above).
		\end{enumerate}
	\end{enumerate}
	Furthermore, it can also be replaced by
	\begin{enumerate}
		\item[(iv$''$)] $\Cc\cap\Om$ contains a sequence $(\lambda_n,q_n,w_n,\phi_n)$ such that $(w_n)$ converges to some $w$ in the space $C_{0,\per,\e}^{1,\alpha'}(\R)$ for some $\alpha'\in(0,\alpha)$ and the surface $S_w$ determined by $w$ via \eqref{eq:Sw} is \emph{not} of class $C^{1,\beta}$ for any $\beta>0$,\end{enumerate}
	provided alternative (i)(b) is also changed to
	\begin{enumerate}[leftmargin=35pt]
		\item[(i)(b$'$)] $\|w_n\|_{C_\per^{1,\alpha'}(\R)}\to\infty$ for any $\alpha'\in(0,\alpha]$.
	\end{enumerate}
\end{proposition}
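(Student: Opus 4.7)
The plan is to refine alternative (iv) --- $\min_\R\K(w_n)\to 0$ --- into more precise information, relying on the Bernoulli identity \eqref{eq:OriginalBernoulliFlat} and routine compactness.

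For (iv'), I would take a sequence $(\lambda_n,q_n,w_n,\phi_n)\in\Cc\cap\Om$ with $\min_\R\K(w_n)\to 0$, attained at some $x_n\in[0,L]$, and invoke
\[
\K(w_n)(x_n)=\frac{|\SL\phi_{n,y}(x_n,0)+\lambda_n|}{\sqrt{2q_n+\lambda_n^2-2gw_n(x_n)}}\longrightarrow 0.
\]
A dichotomy on the numerator yields the result: if $|\SL\phi_{n,y}(x_n,0)+\lambda_n|\to 0$ along a subsequence, then $\min_\R|\SL\phi_y+\lambda|$ comes arbitrarily close to $0$ on $\Cc$ and (iv')(a) holds; otherwise the numerator stays bounded below by some $c>0$ along a subsequence and the denominator must diverge. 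Supposing the original alternatives (i)(a) and (i)(b) fail, $\lambda_n$ is bounded and $\|w_n\|_\infty\le\|w_n\|_{C^{0,\delta}_\per(\R)}$ is bounded for some $\delta\in(5/6,1]$, forcing $q_n\to+\infty$ and thus (iv')(b).

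For (iv''), which comes with (i)(b) replaced by (i)(b'), I first observe that the continuous embedding $C^{1,\alpha'}_\per(\R)\hookrightarrow C^{0,\delta}_\per(\R)$ ensures the bootstrap in the proof of Theorem \ref{thm:GlobalBifurcation} goes through unchanged with (i)(b') in place of (i)(b), since $C^{1,\alpha'}$-boundedness implies $C^{0,\delta}$-boundedness for all $\delta$. Now assume (i)(b') fails along the sequence from (iv), so that $\|w_n\|_{C^{1,\alpha'}_\per(\R)}$ remains bounded for some $\alpha'\in(0,\alpha]$. Compactness of the embedding $C^{1,\alpha'}_\per\hookrightarrow C^{1,\alpha''}_\per$ for $\alpha''<\alpha'$ yields, after passing to a subsequence, $w_n\to w$ in $C^{1,\alpha''}_{0,\per,\e}(\R)$ for some $\alpha''\in(0,\alpha)$; passing to a further subsequence, $x_n\to x^*\in[0,L]$. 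Continuity of $\K$ in this topology gives $\K(w)(x^*)=0$. If $S_w$ determined via \eqref{eq:Sw} were of class $C^{1,\beta}$ for some $\beta>0$, then Lemma \ref{lma:ConformalMapping}(iv) applied to the strip-like domain with boundary $S_w$ would yield $|\nabla V[w+h]|\ne 0$ on $\overline{\Omega_h}$; combined with \eqref{eq:length_of_nablaV_on_top}, this would force $\K(w)>0$ on $\R$, contradicting $\K(w)(x^*)=0$. Hence $S_w$ fails to be $C^{1,\beta}$ for any $\beta>0$, establishing (iv'').

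The main obstacle lies in the last step, namely justifying the application of Lemma \ref{lma:ConformalMapping} to the limit $w$: this requires that the limit parametrization \eqref{eq:Sw} still bound a strip-like domain. Along the sequence the conditions \eqref{eq:additional_requirements} hold strictly, but in the limit only their non-strict analogues --- non-strict injectivity and $w\ge -h$ --- manifestly persist. This can be addressed either by interpreting \enquote{$S_w$ of class $C^{1,\beta}$} directly through the parametrization \eqref{eq:Sw}, so that vanishing of $\K(w)$ at $x^*$ mechanically forbids $C^{1,\beta}$ regularity at the corresponding point, or by a local argument near $x^*$ exploiting that the conformal maps $H[w_n+h]$ converge away from $x^*$ while any hypothetical $C^{1,\beta}$ limit surface would inherit a non-vanishing conformal factor from Lemma \ref{lma:ConformalMapping}(iv).
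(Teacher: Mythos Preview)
Your argument for (iv$'$) is correct and essentially matches the paper's, with the cosmetic difference that you invoke failure of (i)(b) to bound $\|w_n\|_\infty$ whereas the paper instead notes that $w_n$ unbounded from below is already absorbed in alternative (vi); either route works.

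For (iv$''$) you correctly identify both the strategy and the obstacle, but your proposed resolutions do not close the gap. Your first suggestion --- reading ``$S_w$ of class $C^{1,\beta}$'' as a statement about the particular parametrization \eqref{eq:Sw} --- is not legitimate: a curve can be perfectly smooth while admitting a parametrization whose tangent vanishes (e.g.\ $t\mapsto(t^3,0)$), so $\K(w)(x^*)=0$ does not by itself forbid $C^{1,\beta}$ regularity of the set $S_w$. Your second suggestion is too vague to constitute a proof. The clean resolution, which the paper uses, is simply to observe that if \eqref{eq:additional_requirements} fails for the limit $w$, then alternative (v) or (vi) holds and we are done; hence one may assume \eqref{eq:additional_requirements} for $w$. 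Then $H[w+h]$, which is holomorphic and lies in $C^{1,\alpha''}(\overline{\Omega_h})$ by the regularity of $w$, is by the Darboux--Picard theorem a conformal map from $\Omega_h$ onto the strip-like domain $\Omega_w$ extending to a homeomorphism of the closures. By the uniqueness statement in Lemma~\ref{lma:ConformalMapping}(ii), this $H[w+h]$ agrees (up to horizontal translation) with the conformal map furnished by Lemma~\ref{lma:ConformalMapping} for $\Omega_w$. Now Lemma~\ref{lma:ConformalMapping}(iv) combined with \eqref{eq:length_of_nablaV_on_top} and $\K(w)(x^*)=0$ shows that $S_w$ cannot be $C^{1,\beta}$ for any $\beta>0$.
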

\begin{proof}
	Recalling the original Bernoulli equation (see \eqref{eq:OriginalBernoulliFlat}, \eqref{eq:length_of_nablaV_on_top}) we have
	\[\frac{|\SL\phi_y+\lambda|^2}{2\K(w)^2}+gw=q+\frac{\lambda^2}{2}\]
	for $(\lambda,q,w,\phi)\in\Cc\cap\Om$. Thus, if (iv) holds in Theorem \ref{thm:GlobalBifurcation}, that is,
	\[\inf_{(\lambda,q,w,\phi)\in\Cc}\min_\R\K(w)=0,\]
	then (i)(a), (iv$'$)(a), or (iv$'$)(b) occurs or $w$ is unbounded from below, the latter of which, however, is already absorbed in alternative (vi) (or, in other words, is prevented by the bed). This proves the first part of the statement.
	
	Concerning the second part, due to (iv) there exists a sequence of points $(\lambda_n,q_n,w_n,\phi_n)\in\Cc\cap\Om$ with $\min_\R\K(w_n)\to0$ as $n\to\infty$. Now let us assume that $w$ remains bounded along $\Cc$ in $C_\per^{1,\tilde\alpha}(\R)$ for some $\tilde\alpha\in(0,\alpha]$. Then $w_n$ converges, after extracting a suitable subsequence, to some $w$ in $C_\per^{1,\alpha'}(\R)$, where $\alpha'\in(0,\tilde\alpha)$ is arbitrary, but fixed. Thus, clearly
	\begin{align}\label{eq:alternative(iii)}
		1+(\Ch w')(x)=w'(x)=0\text{ for some }x\in\R.
	\end{align}
	(We now essentially follow the proof of the reverse direction of \cite[Theorem 2.2]{ConsVarva11}.) Then $H[w+h]=U[w+h]+iV[w+h]$ is holomorphic, and, since $w\in C_\per^{1,\alpha'}(\R)$, moreover $U[w+h],V[w+h]\in C^{1,\alpha'}(\overline{\Omega_h})$ by \cite[Lemma 2.1]{ConsVarva11}. Additionally assuming that \eqref{eq:additional_requirements} holds for $w$ (otherwise we (iv) or (v) would be valid), an application of the Darboux--Picard Theorem shows that $H[w+h]$ is a conformal mapping from $\Omega_h$ onto $\Omega_w$, which admits an extension as a homeomorphism between the closures of	these domains, with $\R\times\{0\}$ being mapped onto $S_w$ and $\R\times\{-h\}$ being mapped onto $\R\times\{0\}$. Since this conformal mapping is unique up to translations in the variable $x$ (which of course leave regularity properties invariant) by Lemma \ref{lma:ConformalMapping}(ii), the surface cannot be of class $C^{1,\beta}$ for any $\beta>0$ in view of \eqref{eq:alternative(iii)} and Lemma \ref{lma:ConformalMapping}(iv) combined with \eqref{eq:length_of_nablaV_on_top}.
\end{proof}
One might think that the appearance of a somewhat irregular surface as in (iv$''$) is not possible provided the Bernoulli constant $Q$ remains bounded and no surface stagnation occurs (that is, (iii) in Theorem \ref{thm:GlobalBifurcation} does not hold). Indeed, one can derive a pointwise bound of the gradient of the stream function in the physical domain and thus conclude that the limiting stream function is Lipschitz. However, this seems not to imply $C^{1,\beta}$-regularity of the free surface, also in the absence of surface stagnation, as numerical evidence \cite{RRP17}, where a  \enquote{curvy M}-shaped surface is approached, shows. It seems that there are only results available in the literature that need some initial regularity as an assumption to bootstrap from; typical statements in this regard are \enquote{Lipschitz implies $C^{1,\alpha}$} or \enquote{flat implies Lipschitz} \cite{AltCaffarelli81,Caffarelli1,Caffarelli2,DeSilva11}. In this spirit, our alternative (iv$''$) can certainly be weakened slightly, but it is unclear whether it can be completely removed in general.

In fact, we can even somehow get rid of (iv$'$)(a) above. However, for the below argument to work, we need to choose $C_{0,\per,\e}^{3,\alpha}(\R)$ as the space of $w$'s instead of $C_{0,\per,\e}^{1,\alpha}(\R)$ in the definition of $X$ from the beginning, since we will need third order derivatives of $V$ (and thus of $w$) for a certain argument, but we only have a gain of regularity away from the set where the conformal map degenerates and it is not clear why Proposition \ref{prop:3alpha_regularity} should extend there. It is straightforward to see that everything up to now can also be done in such a functional-analytic setting. However, the heavy price we have to pay is that we therefore have to use a much more regular norm for $w$ in the unboundedness alternative (i)(b). In the authors' opinion, it seems much more favorable to have a less regular norm for $w$ in the unboundedness alternative than getting rid of the degeneracy of the conformal map, and in some sense the term $1/\K(w)$ can be thought of a \enquote{part of the norm for $w$} in an unboundedness alternative. Thus, the following statement -- and similarly Proposition \ref{prop:degeneracy1} -- should rather be viewed as a remark and not as a main theorem.

\begin{proposition}
	With the just mentioned realization of the functional-analytic setting and in case \eqref{eq:sup_gamma'_DirEV}, we can replace alternative (iv) in Theorem \ref{thm:GlobalBifurcation} by
	\begin{enumerate}[leftmargin=40pt]
		\item[(iv$'$)(b)] $\sup_{(\lambda,q,w,\phi)\in\Cc}q=\infty$,
	\end{enumerate}
	provided alternative (i)(b) is also changed to
	\begin{enumerate}[leftmargin=35pt]
		\item[(i)(b$''$)] $\|w_n\|_{C_\per^{3,\alpha'}(\R)}\to\infty$ for any $\alpha'\in(0,\alpha]$.
	\end{enumerate}
\end{proposition}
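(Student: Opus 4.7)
My plan is to argue by contradiction, upgrading Proposition~\ref{prop:degeneracy1} so as to rule out its alternative (iv$'$)(a). Suppose a sequence $(\lambda_n,q_n,w_n,\phi_n)\subset\Cc\cap\Om$ witnesses (iv$'$)(a), i.e.\ $\min_\R|\SL(\phi_n)_y+\lambda_n|\to 0$, while none of (i)(a), (i)(b$''$), (i)(c), (ii), (iii), (iv$'$)(b), (v), (vi) is realized along $\Cc$. Under the failure of the listed alternatives and the strengthened unboundedness criterion (i)(b$''$), $(\lambda_n,q_n)$ is bounded and $(w_n)$ is bounded in $C^{3,\alpha}_\per(\R)$. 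The hypothesis $\sup\gamma'<\pi^2/h^2$ provides, via the argument in Remark~\ref{rem:TrivialSolutions}, a Poincar\'e-type coercive control of $\phi_n$; feeding this into iterative Schauder estimates for the Dirichlet problem $\phi_n=\A(\lambda_n,w_n,\phi_n)$ gives a uniform $C^{3,\alpha'}$-bound on $\phi_n$ for any $\alpha'\in(0,\alpha)$. After extraction I would obtain a limit $(\lambda,q,w,\phi)\in\Cc$ in $\R^2\times C^{3,\alpha''}_{\per}\times C^{3,\alpha''}_{\per}$ for some $\alpha''\in(0,\alpha')$; passing to the limit in the Bernoulli identity $|\SL\phi_y+\lambda|^2=\K(w)^2(2q+\lambda^2-2gw)$ and invoking the uniform lower bound $2q+\lambda^2-2gw\ge 2\delta>0$ from the failure of (iii), I would conclude that the holomorphic function $G=V_y+iV_x$, with $V=V[w+h]\in C^{3,\alpha''}(\overline{\Omega_h})$, vanishes at some boundary point $(x_0,0)\in\R\times\{0\}$.

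I would next exclude all orders of vanishing $m\ge 2$ via Hurwitz. Failure of (vi) gives $w>-h$, so the maximum principle and Hopf's lemma applied to $V$ yield $V_y(\cdot,-h)>0$ and thus $G\not\equiv 0$, whence the boundary zero of $G$ at $(x_0,0)$ is of some finite order $m\ge 1$. Since $w_n\to w$ in $C^{3,\alpha''}$ we have $H_n\to H$ locally uniformly on $\overline{\Omega_h}$, and each $H_n$ is univalent on $\Omega_h$ by Lemma~\ref{lma:ConformalMapping}, so Hurwitz's theorem transfers univalence to $H$. The local expansion $H(z)-H(x_0,0)\sim\tfrac{c}{m+1}(z-x_0)^{m+1}$ then shows, via a Rouch\'e count, that for $m\ge 2$ the map $H$ sends a small lower half-disk at $(x_0,0)$ to an angular sector of opening $(m+1)\pi>2\pi$ with $\lfloor(m+1)/2\rfloor$-fold overlap, so $H$ has multiple interior preimages of a generic target point, contradicting univalence. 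Hence $m=1$.

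The remaining case $m=1$ I would rule out using the $C^{3,\alpha''}$-regularity of $w$. For a simple boundary zero $G(z)=c(z-x_0)+O(|z-x_0|^2)$ the identity $R^2=\K(w)^2=|G(\cdot,0)|^2$ yields the Lipschitz-kink expansion $R(x)=|c|\,|x-x_0|+O((x-x_0)^2)$, so $\ln R$ has a logarithmic singularity at $x_0$. Under the Hilbert-type transform $(\Ch)^{-1}\Pro$ this forces a step discontinuity of magnitude $\pm\pi$ on $\theta=(\Ch)^{-1}\Pro(\ln R)$ at $x_0$. Computing the one-sided derivatives of the identity $w'=R\sin\theta$ at $x_0$ through sufficiently high order and matching them against the Hölder regularity of $w'''$ on both sides of $x_0$ would, after the distributional bookkeeping of how the $\pm\pi$-jump of $\theta$ interacts with the kink of $R$, yield an obstruction forcing $c=0$, contradicting $m=1$. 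Alternative (iv$'$)(a) is thereby excluded, and (iv) in Theorem~\ref{thm:GlobalBifurcation} reduces to $\sup_\Cc q=\infty$.

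The main obstacle is the case $m=1$: a simple zero of $G$ on the boundary is compatible both with univalence of $H$ on the open set $\Omega_h$ (Hurwitz only constrains interior behavior) and generically with injectivity of the boundary parametrization of $S_w$ (whose failure would be alternative (v)), so no easy contradiction is available. Instead one has to track carefully how the logarithmic singularity of $\ln R$ at a simple zero of $R$ propagates through $(\Ch)^{-1}\Pro$ into a step discontinuity of $\theta$, and then through the two successive differentiations of the product $R\sin\theta$ needed to recover $w'''$; this distributional bookkeeping, which genuinely requires $w\in C^{3,\alpha}$, is the reason the functional-analytic setting must be strengthened from $C^{1,\alpha}$ to $C^{3,\alpha}$ (and the unboundedness alternative simultaneously strengthened to (i)(b$''$)) before the proposition can even be stated.
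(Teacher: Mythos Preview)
Your $m\ge 2$ step via Hurwitz and the local $(z-x_0)^{m+1}$ expansion is fine, but the $m=1$ case contains a genuine gap. The ``obstruction'' you hope to extract from the kink of $R$ and the $\pi$-jump of $\theta$ against the $C^{3,\alpha}$-regularity of $w$ does not exist: those singularities are artifacts of the polar decomposition at a zero of $G=V_y+iV_x$. Since $V\in C^{3,\alpha''}(\overline{\Omega_h})$, the Cartesian components $V_x,V_y$ are $C^{2,\alpha''}$ up to the boundary, and $w'=\SL V_x$, $1+\Ch w'=\SL V_y$ inherit this regularity directly. Writing a smooth function $G(x,0)=c(x-x_0)+O((x-x_0)^2)$ as $Re^{i\theta}$ manufactures a kink in $R$ and a jump in $\theta$ whose product is smooth by construction; no amount of bookkeeping on $R$ and $\theta$ separately will contradict the smoothness of $w$. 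A simple boundary zero of $G$ is fully compatible with $w\in C^\infty$.

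The paper's argument is completely different and explains why the hypothesis \eqref{eq:sup_gamma'_DirEV} is there. That condition is not used merely for a Poincar\'e-type bound on $\phi_n$; it implies \eqref{ass:Pruefergraph} and hence activates the nodal analysis of Section~\ref{sec:nodalanalysis}. Along $\Cc_\pm$ one then has the sign information $w'\le 0$ on $(0,L/2)$ and $\SL\phi_y+\lambda\le 0$ (for $\lambda_0<0$, say) up to the limit. With $V\in C^{3,\beta}(\overline{\Omega_h})$ and these monotonicity/sign properties in hand, the vanishing $V_x(x_0,0)=V_y(x_0,0)=0$ is excluded by the Hopf boundary-point and Serrin corner-point arguments carried out in \cite[Proof of Theorem~13]{ConsStrVarv16}; the paper simply invokes that computation verbatim. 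The need for third derivatives of $V$ (hence the upgraded space for $w$) enters precisely in that corner-point calculation, not in any regularity conflict of the type you propose.
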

\begin{proof}
	Let us first recall that by means of \eqref{eq:sup_gamma'_DirEV} we are in the situation of Section \ref{sec:nodalanalysis}, and we thus use the nodal properties (and the notation) there.
	
	Suppose that (iv) holds, but all other alternatives in Theorem \ref{thm:GlobalBifurcation} (with (i)(b$''$) instead of (i)(b)) not. Then there exists a sequence $(\lambda_n,q_n,w_n,\phi_n)\in\Cc_\pm$ satisfying $\min_\R\K(w_n)\to0$ as $n\to\infty$. Since $w_n$ is bounded in $C_{0,\per,\e}^{3,\alpha'}(\R)$ for some $\alpha'\in(0,\alpha]$, $(\lambda_n,w_n)$ converges, up to a subsequence, to some $(\lambda,w)$ in $\R\times C_{0,\per,\e}^{3,\beta}(\R)$, where $\beta\in(0,\alpha')$ is arbitrary, but fixed. Therefore,
	\begin{align}\label{eq:degeneracy_x0}
		V_x(x_0,0)=V_y(x_0,0)=0
	\end{align}
	for some $x_0\in[0,L/2]$ by evenness and periodicity, where $V=V[w+h]\in C^{3,\beta}(\overline{\Omega_h})$. Clearly, by Schauder estimates, $(\phi_n)$ is a Cauchy sequence in $C_{\per,\e}^{2,\alpha}(\overline{\Omega_h})$, so it converges to some $\phi$ in this space.
	
	Let us now argue by contradiction and additionally suppose that (iv$'$)(b) does not hold. Then $q_n$ is bounded in $\R$, so we can assume without loss of generality that it converges to some $q\in\R$. (Let us for simplicity again assume that $(\lambda,q,w,\phi)\in\Cc_+$ and $\lambda_0<0$.) Moreover, by Theorem \ref{thm:NodalProperties}, \eqref{eq:MonotoneCrestTroughLimit} is satisfied and it holds that $\SL\phi_y+\lambda\le0$. 
Now we can just follow the the corresponding part of \cite[Proof of Theorem 13]{ConsStrVarv16} and lead \eqref{eq:degeneracy_x0} to a contradiction.
\end{proof}

\subsubsection*{Acknowledgment} We want to thank the anonymous referees for very constructive feedback and helpful comments that improved the manuscript.

\bibliographystyle{amsplain}
\bibliography{bib_2DBifurcation}

\end{document}